\newtheorem{corollary}{Corollary}
\newtheorem{theorem}{Theorem}
\newtheorem{lemma}{Lemma}
\newtheorem{proof}{Proof}
\begin{document}

	\title{A hybridizable discontinuous Galerkin method for the quad-curl problem}
\date{}
\author{
	Gang Chen\thanks{School of Mathematics Sciences, University of Electronic Science and Technology of China, Chengdu 611731, China. email: cglwdm@uestc.edu.cn. The first author's research is supported by
		\color{black} National Natural Science Foundation of China (NSFC) grant no. 11801063, China Postdoctoral Science Foundation grant no. 2018M633339, and Key Laboratory of Numerical Simulation of Sichuan Province (Neijiang, Sichuan Province) grant no. 2017KF003.}, 
	Jintao Cui\thanks{Department of Applied Mathematics, The Hong Kong Polytechnic University,
		Hung Hom, Hong Kong.
		email:jintao.cui@polyu.edu.hk.
		The second author's research is supported in part by \color{black} the Hong Kong RGC, General Research Fund (GRF) grant no. 15302518 and the National Natural Science Foundation of China (NSFC) grant no. 11771367.}, 
	Liwei Xu\thanks{School of Mathematics Sciences,
		University of Electronic Science and Technology of China,
		Chengdu 611731, China.  Corresponding author: email:xul@uestc.edu.cn. The third author's research is supported in part by a Key Project of the Major Research Plan of NSFC grant no. 91630205 and the NSFC grant no. 11771068.}
	}
\maketitle
\begin{abstract} 
	The quad-curl problem arises in magnetohydrodynamics, inverse electromagnetic scattering and transform eigenvalue problems. In this paper we investigate a hybridizable discontinuous Galerkin method to solve the quad-curl problem based on a mixed formulation. The divergence-free condition is enforced by introducing a Lagrange multiplier into the system. The analysis is performed for the model problem with low regularity, which is posed on a Lipschitz polyhedron domain.

keywords: quad-curl, HDG method, low regularity
\end{abstract}

	\section{Introduction.}
\label{456}
Let $\Omega$ be a bounded simply-connected Lipschitz polyhedron in $\mathbb{R}^3$
with connected boundary $\Gamma$. We consider the following mixed quad-{\rm curl} problem:
\par\smallskip\noindent
Find the vector field $\bm{u}$ such that
\begin{eqnarray}
\left\{
\begin{aligned}
\nabla\times\nabla\times\nabla\times\nabla\times \bm{u}
+\nabla p&=\bm{f}\,\,\,\, \text{ in }\Omega, \label{source}\\
\nabla\cdot\bm{u}&=g\,\,\,\,\,  \text{ in }\Omega,  \\
\bm{n}_{\Gamma}\times \bm{u} &=\bm{g}_{1}\,\, \text{ on }\Gamma,\\
\bm{n}_{\Gamma}\times \nabla\times\bm{u} &=\bm{g}_{2}\,\, \text{ on }\Gamma,\\
p&=0\,\,\,\,\, \text{ on }\Gamma.
\end{aligned}
\right.
\end{eqnarray}
Here $\bm{n}_{\Gamma}$ is the outward normal unit vector to the domain boundary $\Gamma$, $\bm{f}\in [L^2(\Omega)]^3$
is an external source filed, $g\in L^2(\Omega)$ and $\bm{g}_1,\bm g_2\in \bm{H}^{-\frac{1}{2}}({\rm div}_{\tau};\Gamma)\cap[H^{\delta}(\Gamma)]^3$ 
are given functions with $\delta>0$, where $\bm{H}^{-\frac{1}{2}}({\rm div}_{\tau};\Gamma)$ is the range space of ``tangential trace" of space $\bm{H}({\rm curl};\Omega)$. See \cite{Buffa} for the detailed description of space $\bm{H}^{-\frac{1}{2}}({\rm div}_{\tau};\Gamma)$.
\par

The model problem \eqref{source} arises in many areas such as magnetohydrodynamics, inverse electromagnetic scattering and transform eigenvalue problems. The main challenges of designing accurate, robust and efficient numerical methods for \eqref{source} are listed as follows.
\begin{itemize}
	\item  The construction of $H^2$-conforming and {\rm curl}-{\rm curl}-conforming finite elements for solving the quad-curl problem would be very complicated.
	\item The quad-{\rm curl} operator is not positive definite; hence it is difficult to design suitable numerical schemes. Moreover, it makes the error analysis and the design of fast solvers more complicated.
	\item The regularity of the model problem \eqref{source} on general polyhedral domain is still unknown. The existing work on numerical schemes are all based on high regularity assumptions.
	\item When Lagrange multiplier is introduced to enforce the divergence-free condition, an inf-sup condition is required in order to ensure existence and uniqueness of the approximation of $p$.
\end{itemize}
\par
There is only few work on numerical methods for quad-{\rm curl} problem on three-dimensional domains. In \cite{Xu1}, a nonconforming finite element method for the quad-{\rm curl} model with low order term was studied under the regularity assumption such that
\begin{eqnarray}
\bm u \in [H^4(\Omega)]^3. \label{H4}
\end{eqnarray}
A discontinuous Galerkin (DG) method using $H({\rm curl})$ conforming elements for the quad-{\rm curl} model problem was investigated in \cite{Xu2},
where the following lower regularity requirement was considered:
\begin{eqnarray} \label{H2Assmp}
\bm u \in [H^2(\Omega)]^3,\, \nabla\times\bm u\in [H^2(\Omega)]^3.
\end{eqnarray}
A mixed FEM for the quad-{\rm curl} eigenvalue problem was introduced and analyzed in \cite{Sun} under the regularity assumption higher than \eqref{H2Assmp}, such that
\begin{eqnarray}
\bm u \in [H^3(\Omega)]^3,\, \nabla\times\bm u\in [H^3(\Omega)]^3.
\end{eqnarray}
The quad-{\rm curl} problem in 2D was studied in \cite{Susan} based on Hodge decomposition.
\par

Some regularity results of the quad-curl problem on domains with particular geometries also exists in the literature. For example, the following results were proved in \cite{Ni}: when $\bm f\in [L^2(\Omega)]^3$, $\bm g_1=\bm g_2=\bm 0$, $g=0$, and the domain has no point and edge singularities, it holds that $\bm u\in [H^4(\Omega)]^3$; when the domain has point and edge singularities, $\bm u$ does not belong to $[H^3(\Omega)]^3$ in general. In \cite{Zhang2018Mixed}, the author proved that on convex polyhedral domains, when $\bm g_1=\bm g_2=\bm 0$, $g=0$ and $\nabla\cdot\bm f=0$, there holds
\begin{align}
\bm u\in[H^2(\Omega)]^3,\, \nabla\times\bm u\in [H^2(\Omega)]^3,\, p=0.
\end{align}
However, there are no regularity results available for general Lipschitz polyhedral domains.
\par
In recent years, the hybridizable discontinuous Galerkin (HDG) method has been successfully applied to solve various types of differential equations. It retains the main advantages of standard DG methods, such as flexible in meshing, easy to design and implement, ideal to be used with hp-adaptive strategy, etc. Moreover, HDG method can significant reduce the number of degrees of freedom, which allows for a substantial reduction in the computational cost. In this paper, we propose and analyze a HDG method for quad-curl model problem \eqref{source}, aiming to tackle the difficulties mentioned above. The error analysis is based on the following lower regularity requirement:
\begin{align}
\bm u\in[H^s(\Omega)]^3,\, \nabla\times\bm u\in [H^{s+1}(\Omega)]^3,\,  \nabla\times\nabla\times\bm u\in [H^s(\Omega)]^3,\, p\in H^{1+s}(\Omega),
\end{align}
with $s \in \left(\frac{1}{2}, 1\right]$. Actually, such regularity results hold for simply-connected Lipschitz polyhedron.
\par
The rest of this paper is organized as follows. In section 2, we give some preliminaries, including basic notations, the regularity results based a mixed formulation and several projection operators needed for error analysis. In section 3, we propose the HDG method for the quad-curl model problem and show its stability results. In section 4, we derive the convergence analysis of the proposed HDG scheme. In section 5, some numerical experiments are performed to verify our theoretical results.
\par
Throughout this paper, we use $C$ to denote a positive constant independent of mesh size and the frequency $w$, not necessarily the same at its each occurrence.   We  use $a\lesssim b$ ($a\gtrsim b$)
to represent $a\le Cb$ ($a\ge Cb$), and $a\sim b$ to represent $a\lesssim b\lesssim a$.

\section{Preliminaries}

\subsection{Notations}
For any bounded domain $\Lambda\color{black}\subset\color{black} \mathbb{R}^s$ $(s=2,3)$, let $H^{m}(\Lambda)$ and $H^m_0(\Lambda)$  denote the usual  $m^{th}$-order Sobolev spaces on $\Lambda$, and $\|\cdot\|_{m, \Lambda}$, $|\cdot|_{m,\Lambda}$  denote the norm and semi-norm on these spaces, respectively. We use $(\cdot,\cdot)_{m,\Lambda}$ to denote the inner product of $H^m(\Lambda)$, with $(\cdot,\cdot)_{\Lambda}:=(\cdot,\cdot)_{0,\Lambda}$.
When $\Lambda=\Omega$, we denote $\|\cdot\|_{m }:=\|\cdot\|_{m, \Omega}$, $|\cdot|_{m}:=|\cdot|_{m,\Omega}$ and $(\cdot,\cdot):=(\cdot,\cdot)_{\Omega}$. In particular, when $\Lambda\in \mathbb{R}^{2}$, we use $\langle\cdot,\cdot\rangle_{\Lambda}$ to replace $(\cdot,\cdot)_{\Lambda}$; when $\Lambda\in \mathbb{R}^{1}$, we use $\langle\!\langle\cdot,\cdot\rangle\!\rangle_{\Lambda}$ to replace $(\cdot,\cdot)_{\Lambda}$ to make a distinction. The bold face fonts will be used for vectors (or tensors) analogues of the Sobolev spaces along with vector-valued (or tensor-valued) functions. For integer $k\geq 0$, we denote by $\mathcal{P}_{k}(\Lambda)$ the set of polynomials defined on $\Lambda$ with degree no greater than $k$.
\par
Let $\mathcal{T}_h=\bigcup\{T\}$  be a shape regular simplex partition of the domain $\Omega$ consists of arbitrary polygons. For any $T\in\mathcal{T}_h$, we let $h_T$ be the infimum of the diameters of spheres containing $T$ and denote the mesh size $h:=\max_{T\in\mathcal{T}_h}h_T$. Let $\mathcal{F}_h=\bigcup\{F\}$ be the union of all faces of $T\in\mathcal{T}_h$, and let $\mathcal{F}_h^o$ and $\mathcal{F}_h^B$ be the set of interior faces and boundary faces, respectively. We denote by $h_F$ the length of diameter of the smallest circle containing face $F$.
For all  $T\in\mathcal{T}_h$ and $F\in\mathcal{F}_h$, we denote by $ \bm{n}_T $ and $\bm{n}_F$   the  unit outward  normal vectors along $\partial T$ and  face $F$, respectively. Broken curl-curl, \text{{\rm curl}}, \text{{\rm div}} and gradient operators with respect to
decomposition $\mathcal{T}_h$ are donated by $\nabla\times\nabla\times$, $\nabla \times$, $\nabla \cdot$ and $\nabla $, respectively.
For $u,v\in L^2(\partial\mathcal{T}_h)$, we define the following inner product and the corresponding norm:
\begin{align*}
\langle u,v \rangle_{\partial\mathcal{T}_h}=\sum_{T\in\mathcal{T}_h}\langle u,v\rangle_{\partial T},
\qquad
\|v\|^2_{\mathcal{T}_h}=\sum_{T\in\mathcal{T}_h}\|v\|^2_{0,T},\qquad
\|v\|^2_{\partial\mathcal{T}_h}=\sum_{T\in\mathcal{T}_h}\|v\|^2_{0,\partial T}.
\end{align*}

Define the following function spaces
\begin{align*}
\bm{H}(\text{{\rm curl}};\Omega)&:=\big\{\bm{v}\in [L^2(\Omega)]^3: \nabla\times\bm{v}\in [L^2(\Omega)]^3 \big\},\\
\bm{H}^s(\text{{\rm curl}};\Omega)&:=\big\{\bm{v}\in [H^s(\Omega)]^3: \nabla\times\bm{v}\in [H^s(\Omega)]^3 \big\}\,\text{ with }s\ge 0,\\
\bm{H}_0(\text{{\rm curl}};\Omega)&:=\big\{\bm{v}\in \bm{H}(\text{{\rm curl}};\Omega): \bm{n}_{\Gamma}\times\bm{v}|_{\Gamma}=\bm{0}\big \},\\
\bm{H}(\text{{\rm div}} ;\Omega)&:=\big\{\bm{v}\in [L^2(\Omega)]^3: \nabla\cdot\bm{v}\in L^2(\Omega)\big \},\\
\bm{H}_0(\text{{\rm div}} ;\Omega)&:=\big\{\bm{v}\in \bm{H}(\text{{\rm div}} ;\Omega): \bm{n}\cdot\bm{v}=0 \big\},\\
\bm{H}(\text{{\rm div}} ^0;\Omega)&:=\big\{\bm{v}\in \bm{H}(\text{{\rm div}} ;\Omega): \nabla\cdot\bm{v}=0\big\},
\end{align*}
and
\begin{align*}
\bm{X} &:=\bm{H}(\text{{\rm curl}};\Omega)\cap \bm{H}(\text{{\rm div}} ;\Omega),\\
\bm{X} _N&:=\bm{H}_0(\text{{\rm curl}};\Omega)\cap \bm{H}(\text{{\rm div}} ;\Omega),\\
\bm{X} _T&:=\bm{H}(\text{{\rm curl}};\Omega)\cap \bm{H}_0(\text{{\rm div}} ;\Omega).
\end{align*}
We define the following norm on $\bm{H}^s(\text{{\rm curl}};\Omega)$ with $s\ge 0$:
\begin{eqnarray*}
	\|\bm{v}\|_{s,\text{{\rm curl}}}=\left(  \|\bm{v}\|_s^2+\|\nabla\times\bm{v}\|_s^2    \right)^{\frac{1}{2}}.
\end{eqnarray*}

\subsection{Regularity}
By introducing $\bm{r}=\nabla\times\nabla\times\bm{u}$ we can rewrite \eqref{source} into the following second order system.
\par\smallskip\noindent
Find $(\bm{r},\bm{u},p)$ that satisfies
\begin{eqnarray}
\left\{
\begin{aligned}
\bm{r}-\nabla\times\nabla\times\bm{u}&=\bm{0}\,\,\,\,\,\text{ in }\Omega,\\
\nabla\times\nabla\times\bm{r}+\nabla p&=\bm{f}\,\,\,\, \text{ in }\Omega, \\
\nabla\cdot\bm{u}&=g\,\,\,\,\,  \text{ in }\Omega,  \\
\bm{n}_{\Gamma}\times \bm{u} &=\bm{g}_1\,\, \text{ on }\Gamma,\\
\bm{n}_{\Gamma}\times\nabla\times \bm{u} &=\bm{g}_2\,\, \text{ on }\Gamma, \label{mix0source}\\
p &=0\,\,\,\, \text{ on } \Gamma.
\end{aligned}
\right.
\end{eqnarray}
We assume that the following regularity holds true:
\begin{align}
\bm r\in \bm{H}^s(\Omega),\, \bm u\in \bm{H}^s(\text{{\rm curl}};\Omega),\, \nabla\times\bm u\in \bm{H}^{1+s}(\text{{\rm curl}};\Omega),\,\text{and}\,\, p\in H^{1+s}(\Omega), \label{regular}
\end{align}
with $s\in \left(\frac{1}{2},1\right]$
The designing of HDG scheme  will based on
equations \eqref{mix0source}, and the analysis of HDG scheme will based on the regularity \eqref{regular}.

\subsection{Projection operators}

\subsubsection{$L^2$-projection}
For any $T\in\mathcal{T}_h$, $ F\in\mathcal{F}_h$ and integer $j\ge 0$, let $\Pi_j^o: L^2(T)\rightarrow \mathcal{P}_{j}(T)$ and $\Pi_j^{\partial}: L^2(F)\rightarrow \mathcal{P}_{j}(F)$ be the usual $L^2$ projection operators.
The following approximation and stability results are standard.
\begin{lemma}\label{lemma2.5}
	For any $T\in\mathcal{T}_h$ and $F\in\mathcal{F}_h$ \color{black} and nonnegative integer $j$\color{black}, it holds
	\begin{align*}
	\|v-\Pi_j^ov\|_{0,T}&\lesssim h_T^{s}|v|_{s,T}\,\,\,\quad\quad\forall\, v\in H^{s}(T),\\
	\|v-\Pi_j^ov\|_{0,\partial T}&\lesssim h_T^{s-1/2}|v|_{s,T}\quad\forall\, v\in H^{s}(T),\\
	\|v-\Pi_j^{\partial}v\|_{0,\partial T}&\lesssim h_T^{s-1/2}|v|_{s,T}\quad\forall\, v\in H^{s}(T),\\
	\|\Pi_j^ov\|_{0,T}&\le\|v\|_{0,T}\quad\quad\quad\,\,\forall\, v\in L^{2}(T),\\
	\|\Pi_j^{\partial}v\|_{0,F}&\le\|v\|_{0,F}\,\,\quad\quad\quad\forall\, v\in L^{2}(F),
	\end{align*}
	where $1/2< s\le j+1$.
\end{lemma}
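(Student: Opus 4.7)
The plan is to reduce everything to a reference element $\widehat{T}$ via an affine diffeomorphism and combine the (fractional) Bramble--Hilbert lemma with a scaling argument, which is the standard route for such approximation bounds.

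First I would dispose of the two stability estimates. Both $\Pi_j^o$ and $\Pi_j^\partial$ are orthogonal projections onto closed subspaces of $L^2(T)$ and $L^2(F)$ respectively, so $\|\Pi_j^o v\|_{0,T}\le\|v\|_{0,T}$ and $\|\Pi_j^\partial v\|_{0,F}\le\|v\|_{0,F}$ are immediate. For the interior approximation $\|v-\Pi_j^o v\|_{0,T}\lesssim h_T^s|v|_{s,T}$, I would pull $v$ back to $\widehat{T}$, observe that $\Pi_j^o$ annihilates $\mathcal{P}_j(\widehat{T})\supset\mathcal{P}_{\lfloor s\rfloor}(\widehat{T})$, and apply the Bramble--Hilbert lemma in the fractional $H^s(\widehat{T})$ version (valid for $0<s\le j+1$) to get $\|\widehat v-\Pi_j^o\widehat v\|_{0,\widehat T}\lesssim|\widehat v|_{s,\widehat T}$; a standard affine scaling then produces the factor $h_T^s$.

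For the two boundary estimates, the natural tool is the trace/scaling inequality on the reference element, together with the $L^2$-stability of the projections. For $\|v-\Pi_j^o v\|_{0,\partial T}$ I would apply the scaled trace inequality
\[
\|w\|_{0,\partial T}\lesssim h_T^{-1/2}\|w\|_{0,T}+h_T^{1/2}|w|_{1,T}
\]
(or its fractional analogue on $\widehat T$) to $w=v-\Pi_j^o v$ and then invoke the interior estimate just proved, noting that $|v-\Pi_j^o v|_{1,T}$ is controlled by $|v|_{s,T}$ at the appropriate scaling $h_T^{s-1}$. For the face projection bound $\|v-\Pi_j^\partial v\|_{0,\partial T}$, I would use the classical trick of inserting the best polynomial approximation: pick $q\in\mathcal{P}_j(T)$ from Bramble--Hilbert, write
\[
v-\Pi_j^\partial v=(v-q)+\Pi_j^\partial(q-v)\quad\text{on }F,
\]
apply face $L^2$-stability to the second term, and bound both contributions by $\|v-q\|_{0,\partial T}$, which is handled exactly as in the previous estimate.

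The main technical point, and the only place any care is needed, is the use of the \emph{fractional} Bramble--Hilbert lemma and the hypothesis $s>\tfrac12$: the restriction $s>\tfrac12$ is precisely what makes traces well-defined on $\partial T$ (so the face $L^2$-projection and the trace inequality make sense), while $s\le j+1$ is what allows $\mathcal{P}_j$ to be rich enough for the polynomial approximation step. Once these two bookkeeping conditions are observed, the scaling argument is entirely routine and simultaneously covers the non-integer values of $s$ appearing in the regularity assumption \eqref{regular}.
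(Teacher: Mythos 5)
The paper does not actually prove this lemma: it is stated with the remark that ``the following approximation and stability results are standard,'' so there is no in-paper argument to compare against. Your proposal is the standard proof and is essentially correct: orthogonality gives the two stability bounds for free, the interior estimate follows from the fractional Bramble--Hilbert (Deny--Lions/Dupont--Scott) lemma on a reference element plus affine scaling, and the two boundary estimates follow by combining a trace inequality with the interior estimate, using $\Pi_j^{\partial}q=q$ on $F$ for $q\in\mathcal{P}_j(T)$ to reduce the face-projection bound to the volume one. The hypotheses $s>\tfrac12$ and $s\le j+1$ enter exactly where you say they do.

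One point deserves more care than your write-up gives it. The scaled trace inequality you display, $\|w\|_{0,\partial T}\lesssim h_T^{-1/2}\|w\|_{0,T}+h_T^{1/2}|w|_{1,T}$, cannot be applied to $w=v-\Pi_j^o v$ when $v\in H^s(T)$ with $\tfrac12<s<1$ (the regime the paper actually needs, since it works with $s\in(\tfrac12,1]$): the term $|v-\Pi_j^o v|_{1,T}$ is not finite for such $v$, so the claim that it is ``controlled by $|v|_{s,T}$ at scaling $h_T^{s-1}$'' is not meaningful there. The correct route, which you gesture at with ``or its fractional analogue,'' is to work on the reference element with the fractional trace estimate $\|\widehat w\|_{0,\partial\widehat T}\lesssim\|\widehat w\|_{s,\widehat T}$ for $s>\tfrac12$, bound the full $H^s(\widehat T)$-norm of $\widehat v-\Pi_j^o\widehat v$ by $|\widehat v|_{s,\widehat T}$ via Bramble--Hilbert, and then scale, which yields the stated factor $h_T^{s-1/2}$. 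With that substitution made explicit, the argument is complete.
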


\subsubsection{$H({\rm div})$-projection}
For integer $j\ge 1$, we first define the following local spaces on $T$ and $F$.
\begin{align*}
\bm{\mathcal{D}}_{j}(T)=[\mathcal{P}_{j-1}(T)]^3\oplus\widetilde{\mathcal{P}}_{j-1}(T)\cdot\bm x,\\
\bm{\mathcal{D}}_{j}(F)=[\mathcal{P}_{j-1}(F)]^3\oplus\widetilde{\mathcal{P}}_{j-1}(F)\cdot\bm x,
\end{align*}
where $\widetilde{\mathcal P}_{j-1}$ denotes the subspace of $\mathcal{P}_{j-1}$ consisting of homogeneous polynomials of degree $j-1$. Then we define the global $H({\rm div})$ space by
\begin{align*}
\bm X_{h,j}=\{\bm v_h\in \bm H({\rm div};\Omega): \bm v_h|_T\in \bm{\mathcal{D}}_{j}(T), \ \forall\, T\in\mathcal{T}_h\}.
\end{align*}
For any $\bm v\in\bm H({\rm div};\Omega)$, its $H({\rm div})$-projection $\bm{\Pi}^{{\rm div}}_{h,j}\bm v\in \bm X_{h,j}$ is defined as follows (see Ref. \cite{Ned1}).
\begin{subequations}
	\begin{align}
	\langle\bm{\Pi}^{{\rm div}}_{h,j}\bm v\cdot\bm n, w_{j-1}\rangle_F
	&=\langle \bm v\cdot\bm  n, w_{j-1}\rangle_F\,\,\,\, \forall\,  w_{j-1}\in\mathcal{P}_{j-1}(F),\label{or-div-F} \\
	(\bm{\Pi}^{{\rm div}}_{h,j}\bm v,\bm w_{j-2})_T
	&=(\bm v,\bm w_{j-2})_T\quad\quad \forall\, \bm w_{j-2}\in [\mathcal{P}_{k-2}(T)]^3,\label{or-div-T}
	\end{align}
\end{subequations}
which hold for all $T\in\mathcal{T}_h$, $F\subset\partial T$ and $E\subset\partial F$.

\subsubsection{$H({\rm curl})$-projection}
For integer $j\ge 1$, we define
\begin{align*}
\bm{Y}_{h,j}&=\{\bm v_h\in \bm{H}({\rm curl};\Omega):\bm v_h|_T\in [\mathcal{P}_{j}(T)]^3,\ \forall\, T\in\mathcal{T}_h \},
\end{align*}
For any $\bm v\in\bm H^{s}({\rm curl};\Omega)$ ($s>\frac{1}{2}$), its H(curl)-projection $\bm{\Pi}^{{\rm curl}}_{h,j}\bm v\in \bm Y_{h,j}$ is defined as follows (see Ref. \cite{Ned2} for details).
\begin{subequations}
	\begin{align}
	\langle\!\langle\bm{\Pi}^{{\rm curl}}_{h,j}\bm v\cdot\bm \tau, w_{j}\rangle\!\rangle_E
	&=\langle\!\langle\bm v\cdot\bm  \tau, w_{j-1}\rangle\!\rangle_E\,\,\, \forall\,  w_{j}\in\mathcal{P}_{j}(E),\label{or-curl-E} \\
	\langle\bm{\Pi}^{{\rm curl}}_{h,j}\bm v,\bm w_{j-1}\rangle_F
	&=\langle\bm v\cdot,\bm w_{j-1}\rangle_F\qquad\, \forall\,  w_{j-1}\in\bm{\mathcal{D}}_{j-1}(F),\label{or-curl-F} \\
	(\bm{\Pi}^{{\rm curl}}_{h,j}\bm v,\bm w_{j-2})_T
	&=(\bm v,\bm w_{j-2})_T\qquad\,\, \forall\, \bm w_{j-2}\in \bm{\mathcal{D}}_{k-2}(T),\label{or-curl-T}
	\end{align}
\end{subequations}
which hold for all $T\in\mathcal{T}_h$ and $F\subset\partial T$.
\par
Note that for each $T\in\mathcal{T}_h$, the above projection makes sense for $\bm{v}\in \bm{H}^s({\rm curl};T)$ with $s>\frac{1}{2}$ (see \cite[Lemma 5.1]{Regularity} for details).
Moreover, the following approximation properties hold true:
\begin{lemma}\label{lem:21} \cite{Ned2,Regularity,Monk}
	For any $T\in\mathcal{T}_h$ and $\bm{v}\in \bm{H}^s({\rm curl};T)$ with $s>\frac{1}{2}$,
	if $\bm{v}\in [H^t(T)]^3$ with $t\in (\frac{1}{2},k+1]$, it holds that
	\begin{eqnarray}
	\|\bm{v}-\bm{\Pi}_{h,k}^{{\rm curl}}\bm{v}\|_{{\mathcal T_h}}\lesssim h^t\|\bm{v}\|_{t}.
	\end{eqnarray}
	Moreover, if $\nabla\times\bm{v}\in [H^t(T)]^3$ with $t\in (\frac{1}{2},k]$, then it holds
	\begin{eqnarray}
	\|\nabla\times\bm{v}-\nabla\times\bm{\Pi}_{h,k}^{{\rm curl}}\bm{v}\|_{{\mathcal T_h}}\lesssim h^t\|\nabla\times\bm{v}\|_{t}.
	\end{eqnarray}
\end{lemma}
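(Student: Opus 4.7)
The plan is to follow the classical Nédélec-style argument for approximation properties of $H(\text{curl})$-conforming projections: reduce to the reference element via a Piola transform, prove boundedness of the projection in the $\bm{H}^s(\text{curl})$-topology with $s>1/2$, subtract a best polynomial, and scale back. Fix $T\in\mathcal{T}_h$ and let $F_T:\hat T\to T$ be the affine map with associated covariant Piola transform $\bm v\mapsto \hat{\bm v}$. This transform intertwines the degrees of freedom in \eqref{or-curl-E}--\eqref{or-curl-T}, so $\widehat{\bm{\Pi}^{\text{curl}}_{h,k}\bm v}=\hat{\bm{\Pi}}^{\text{curl}}_k\hat{\bm v}$, which lets me work entirely on $\hat T$.

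First I would verify that $\hat{\bm{\Pi}}^{\text{curl}}_k:\bm{H}^s(\text{curl};\hat T)\to[\mathcal{P}_k(\hat T)]^3$ is well-defined and bounded for $s>1/2$. The delicate point is that the edge functionals $\langle\!\langle\hat{\bm v}\cdot\bm\tau,w\rangle\!\rangle_E$ in \eqref{or-curl-E} must be controlled by $\|\hat{\bm v}\|_{\hat{\bm H}^s(\text{curl};\hat T)}$; the trace bound cited as \cite[Lemma~5.1]{Regularity} provides exactly this, and fails precisely at $s=1/2$. The face and interior functionals \eqref{or-curl-F}--\eqref{or-curl-T} are bounded by a standard trace inequality, and unisolvence of the DOFs on $[\mathcal{P}_k(\hat T)]^3$ upgrades these bounds to boundedness of $\hat{\bm{\Pi}}^{\text{curl}}_k$ itself. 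Next, exploit polynomial preservation: for any $\bm q\in[\mathcal{P}_k(\hat T)]^3$, $\hat{\bm v}-\hat{\bm{\Pi}}^{\text{curl}}_k\hat{\bm v}=(\hat{\bm v}-\bm q)-\hat{\bm{\Pi}}^{\text{curl}}_k(\hat{\bm v}-\bm q)$, so by the boundedness above,
\[
\|\hat{\bm v}-\hat{\bm{\Pi}}^{\text{curl}}_k\hat{\bm v}\|_{0,\hat T} \lesssim \inf_{\bm q\in[\mathcal{P}_k(\hat T)]^3}\|\hat{\bm v}-\bm q\|_{\hat{\bm H}^t(\text{curl};\hat T)}.
\]
A Bramble--Hilbert / Deny--Lions argument on $\hat T$ then produces $\|\hat{\bm v}-\hat{\bm{\Pi}}^{\text{curl}}_k\hat{\bm v}\|_{0,\hat T}\lesssim|\hat{\bm v}|_{t,\hat T}$ for $t\in(1/2,k+1]$, and the Piola-transform scaling laws introduce the factor $h_T^t$ when returning to $T$. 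Summing over $T\in\mathcal{T}_h$ yields the first estimate.

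For the curl estimate I would invoke the commuting-diagram property $\nabla\times\bm{\Pi}^{\text{curl}}_{h,k}\bm v=\bm{\Pi}^{\text{div}}_{h,k}(\nabla\times\bm v)$, which is standard for the Nédélec $H(\text{curl})$--$H(\text{div})$ pair and reduces the problem to an $H(\text{div})$-projection estimate. Applying the same reference-element/Bramble--Hilbert/scaling argument to $\bm{\Pi}^{\text{div}}_{h,k}$, and noting that $\nabla\times[\mathcal{P}_k]^3\subseteq[\mathcal{P}_{k-1}]^3$ caps the rate at $h^k$, recovers the stated bound in the range $t\in(1/2,k]$. The main obstacle, and the reason for the $s>1/2$ assumption throughout the paper, is step one: edge traces of $\bm{H}^{1/2}(\text{curl})$ fields are not continuous at the critical exponent, so a delicate fractional trace/embedding argument (already encapsulated in the cited lemma) is the only nontrivial ingredient; the remainder of the proof is routine scaling and polynomial approximation.
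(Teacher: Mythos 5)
The paper does not prove this lemma at all: it is imported by citation from \cite{Ned2,Regularity,Monk}, so there is no in-paper argument to compare against. Your sketch is a faithful reconstruction of the standard proof in those references: covariant Piola transform to the reference element, well-posedness and boundedness of the interpolation in $\bm{H}^s(\mathrm{curl})$ for $s>\tfrac12$ (with the edge functionals as the only delicate point, exactly the content of the cited trace lemma), invariance on $[\mathcal{P}_k]^3$ plus Bramble--Hilbert, scaling, and the commuting diagram $\nabla\times\bm{\Pi}^{\rm curl}_{h,k}=\bm{\Pi}^{\rm div}_{h,k}\nabla\times$ to reduce the second bound to the $H(\mathrm{div})$ case with the rate correctly capped at $h^k$. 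All the essential ingredients are present and in the right order.

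One step deserves more care than you give it. After invoking boundedness of $\hat{\bm{\Pi}}^{\rm curl}_k$ on $\bm{H}^t(\mathrm{curl};\hat T)$ you bound
$\inf_{\bm q\in[\mathcal{P}_k(\hat T)]^3}\|\hat{\bm v}-\bm q\|_{\hat{\bm H}^t(\mathrm{curl};\hat T)}$
by $|\hat{\bm v}|_{t,\hat T}$ via Deny--Lions. That norm contains $\|\nabla\times(\hat{\bm v}-\bm q)\|_{s,\hat T}$ for some $s>\tfrac12$, and for $t\le \tfrac32$ this is \emph{not} controlled by $|\hat{\bm v}|_{t,\hat T}$ alone; the quotient argument inevitably leaves a curl seminorm on the right-hand side. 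This is why the sharp low-regularity statement in the literature (e.g.\ Monk, Theorem~5.41) reads $\|\bm v-\bm\Pi^{\rm curl}_{h,k}\bm v\|_{0,T}\lesssim h^t(\|\bm v\|_{t,T}+\|\nabla\times\bm v\|_{t,T})$ rather than with $\|\bm v\|_t$ only. The lemma as stated in the paper inherits the same imprecision, and in every application here $\nabla\times\bm u$ carries at least the regularity of $\bm u$, so nothing downstream breaks; but as a self-contained proof of the displayed inequality your Deny--Lions step should either carry the curl term explicitly or restrict to $t$ large enough that $\|\nabla\times(\hat{\bm v}-\bm q)\|_{s}$ is absorbed by $\|\hat{\bm v}-\bm q\|_{t}$.
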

\begin{lemma} For any integer $j\ge 1$, we have the following commuting property
	\begin{align}
	\nabla\times\bm{\Pi}^{{\rm curl}}_{h,j}\bm{v}&=\bm{\Pi}^{{\rm div}}_{h,j}\nabla\times\bm v\,\, \forall\, \bm v\in \bm{H}^s({\rm curl};\Omega),\, s>\frac{1}{2}.
	\end{align}
\end{lemma}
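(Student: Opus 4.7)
The plan is to invoke the unisolvence of the $H(\mathrm{div})$-projection on each simplex: I will show that $\nabla\times\bm{\Pi}^{\mathrm{curl}}_{h,j}\bm v$ lies in $\bm X_{h,j}$ and satisfies the same defining moments \eqref{or-div-F}--\eqref{or-div-T} as $\bm{\Pi}^{\mathrm{div}}_{h,j}(\nabla\times\bm v)$; uniqueness will then force the two to coincide. The containment in $\bm X_{h,j}$ is immediate: elementwise $\nabla\times\bm{\Pi}^{\mathrm{curl}}_{h,j}\bm v|_T\in\nabla\times[\mathcal{P}_j(T)]^3\subset[\mathcal{P}_{j-1}(T)]^3\subset\bm{\mathcal{D}}_j(T)$, and since $\bm{\Pi}^{\mathrm{curl}}_{h,j}\bm v\in\bm H(\mathrm{curl};\Omega)$, its curl belongs to $\bm H(\mathrm{div}^0;\Omega)\subset\bm H(\mathrm{div};\Omega)$, which guarantees single-valued normal traces on interior faces.

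Next, I would verify the face moments. Fix $F\subset\partial T$ and $w_{j-1}\in\mathcal{P}_{j-1}(F)$. On the flat face $F$ one has the identity $(\nabla\times\bm w)\cdot\bm n=\mathrm{curl}_F(\bm w\times\bm n)$ for smooth $\bm w$, where $\mathrm{curl}_F$ is the scalar surface curl. Applying this to $\bm{\Pi}^{\mathrm{curl}}_{h,j}\bm v$, integrating by parts on $F$, and splitting the boundary integral over the edges $E\subset\partial F$ produces an expression in the tangential trace of $\bm{\Pi}^{\mathrm{curl}}_{h,j}\bm v$ tested against $\nabla_F w_{j-1}\in[\mathcal{P}_{j-2}(F)]^3\subset\bm{\mathcal{D}}_{j-1}(F)$ on $F$ and against $\bm\tau\cdot(\cdot)$ tested with $w_{j-1}|_E\in\mathcal{P}_{j-1}(E)\subset\mathcal{P}_j(E)$ on each edge. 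By the face and edge conditions \eqref{or-curl-E}--\eqref{or-curl-F}, each of these moments coincides with the corresponding moment of $\bm v$, so reversing the surface integration by parts gives $\langle(\nabla\times\bm v)\cdot\bm n,w_{j-1}\rangle_F$, which matches the face moment \eqref{or-div-F} of $\bm{\Pi}^{\mathrm{div}}_{h,j}(\nabla\times\bm v)$.

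The volume moments I would handle by volumetric integration by parts. For $\bm w_{j-2}\in[\mathcal{P}_{j-2}(T)]^3$,
\begin{align*}
(\nabla\times\bm{\Pi}^{\mathrm{curl}}_{h,j}\bm v,\bm w_{j-2})_T
=(\bm{\Pi}^{\mathrm{curl}}_{h,j}\bm v,\nabla\times\bm w_{j-2})_T
+\langle\bm{\Pi}^{\mathrm{curl}}_{h,j}\bm v\times\bm n,\bm w_{j-2}\rangle_{\partial T}.
\end{align*}
Since $\nabla\times\bm w_{j-2}\in[\mathcal{P}_{j-3}(T)]^3\subset\bm{\mathcal{D}}_{j-2}(T)$, the first term reduces to $(\bm v,\nabla\times\bm w_{j-2})_T$ by \eqref{or-curl-T}. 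The boundary term, face by face, is a pairing of the tangential trace of $\bm{\Pi}^{\mathrm{curl}}_{h,j}\bm v$ against $\bm w_{j-2}|_F\in[\mathcal{P}_{j-2}(F)]^3$; decomposing this test function into its components along $\bm{\mathcal{D}}_{j-1}(F)$ and along edge tangents as dictated by the Nédélec DOFs on $F$, the face and edge conditions \eqref{or-curl-E}--\eqref{or-curl-F} let me replace $\bm{\Pi}^{\mathrm{curl}}_{h,j}\bm v$ by $\bm v$. Undoing the integration by parts yields $(\nabla\times\bm v,\bm w_{j-2})_T$, which by \eqref{or-div-T} matches the interior moment of $\bm{\Pi}^{\mathrm{div}}_{h,j}(\nabla\times\bm v)$.

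The only delicate point I anticipate is the bookkeeping in the second step, where the surface identity $(\nabla\times\bm w)\cdot\bm n=\mathrm{curl}_F(\bm w\times\bm n)$ must be applied to a function that is only in $\bm H^s(\mathrm{curl};T)$ with $s>1/2$, so the edge integrals in the surface integration by parts must be interpreted in a duality sense; invoking the density of smooth functions in $\bm H^s(\mathrm{curl};T)$ for $s>1/2$, together with the well-posedness of the Nédélec DOFs on this space (cited from \cite{Regularity,Monk}), is what makes the manipulation legitimate. The rest is algebraic matching of polynomial degrees.
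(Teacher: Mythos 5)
The paper states this lemma without proof; it is the classical commuting-diagram property of the N\'ed\'elec interpolants, for which the authors implicitly rely on \cite{Ned1,Ned2} (and on \cite[Lemma 5.1]{Regularity} for the well-posedness of the degrees of freedom on $\bm H^s({\rm curl};T)$, $s>\tfrac12$). Your argument is the standard proof of that classical fact and it is correct: the inclusion $\nabla\times[\mathcal P_j(T)]^3\subset[\mathcal P_{j-1}(T)]^3\subset\bm{\mathcal D}_j(T)$ together with $\nabla\times\bm H({\rm curl};\Omega)\subset\bm H({\rm div};\Omega)$ places $\nabla\times\bm{\Pi}^{\rm curl}_{h,j}\bm v$ in $\bm X_{h,j}$; the surface Stokes identity reduces the face moments \eqref{or-div-F} to the edge moments \eqref{or-curl-E} and face moments \eqref{or-curl-F}, since the rotated surface gradient $\nabla_F^{\perp}w_{j-1}$ lies in $[\mathcal P_{j-2}(F)]^2\subset\bm{\mathcal D}_{j-1}(F)$ and $w_{j-1}|_E\in\mathcal P_{j-1}(E)\subset\mathcal P_j(E)$; the volume moments \eqref{or-div-T} reduce to \eqref{or-curl-T} plus face terms; and unisolvence of the $H({\rm div})$ element on $\bm{\mathcal D}_j(T)$ closes the argument. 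Two small remarks: in the volume step the tangential part of $\bm w_{j-2}|_F$ already belongs to $\bm{\mathcal D}_{j-1}(F)$, so the face moments \eqref{or-curl-F} alone absorb the boundary term and the further decomposition into edge tangents you invoke is unnecessary; and the density/duality caveat you raise for applying the surface identity to functions that are merely in $\bm H^s({\rm curl};T)$ with $s>\tfrac12$ is precisely the point the paper delegates to \cite[Lemma 5.1]{Regularity}, so your handling of it is consistent with the paper's framework.
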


\subsubsection{H({\rm div})-projection on domain surface}
For any $\bm{v}\in \bm{H}^s({\rm curl};T)$ with $s>\frac{1}{2}$ and $T\in\mathcal{T}_h$, we consider $\bm{\Pi}_{h,k}^{{\rm curl}}\bm{v}$ restricted to face $F\subset \partial T$ such that
\begin{align}
\langle\!\langle\bm{\Pi}_{h,k}^{{\rm curl}}\bm{v}\cdot\bm{t}_{FE},w_k\rangle\!\rangle_{E}
=\langle\!\langle\bm{v}\cdot\bm{t}_{FE},w_k\rangle\!\rangle_{E}\quad \forall\, w_k\in \mathbb{P}_k(E),\, E\subset\partial F,\label{314}
\end{align}
where $\bm{t}_{FE}=\bm{n}_F\times\bm{n}_{FE}$. 
It can be observed that for $k\ge 2$,
\begin{eqnarray}
\langle \bm{\Pi}_{h,k}^{{\rm curl}}\bm{v},\bm{w}_{k-1}  \rangle_{F}=\langle \bm{v},\bm{w}_{k-1}  \rangle_{F}\quad\forall \,\bm{w}_{k-1}\in \bm{\mathcal{D}}_{k-1}(F).\label{315}
\end{eqnarray}

\par
Note that
\begin{equation*}
\bm{v}\cdot\bm{t}_{FE}=\bm{v}\cdot(\bm{n}_{F}\times\bm{n}_{FE} )
= (\bm{v}\times\bm{n}_{F}) \cdot  \bm{n}_{E}
=-( \bm{n}_{F}\times\bm{v})\cdot  \bm{n}_{E} ,
\end{equation*}
and
\begin{align*}
\bm{v}|_F&=(\bm{n}_F\times\bm{v})\times\bm{n}_F+(\bm{v}\cdot\bm{n}_F)\bm{n}_F,
&\bm{v}\cdot\bm{n}_F=0.
\end{align*}
Hence we can rewrite equations \eqref{314} and \eqref{315} as
\begin{align*}
\langle\!\langle\bm{\Pi}_{h,k}^{{\rm curl}}\cdot( \bm{n}_{F}\times\bm{v})\cdot  \bm{n}_{E},w_k\rangle\!\rangle_{E}
=\langle\!\langle( \bm{n}_{F}\times\bm{v})\cdot \bm{n}_{E},w_k\rangle\!\rangle_{E}\quad \forall\, w_k\in \mathbb{P}_k(E),\, E\subset\partial F,
\end{align*}
and
\begin{align*}
\langle \bm{\Pi}_{h,k}^{{\rm curl}}(\bm{n}_F\times\bm{v}),\bm{n}_F\times\bm{w}_{k-1}  \rangle_{F}
=\langle \bm{n}_F\times\bm{v},\bm{n}_F\times\bm{w}_{k-1}  \rangle_{F}\quad\forall\, \bm{w}_{k-1}\in \bm{\mathcal{D}}_{k-1}(F),\, k\ge 2.
\end{align*}
The operator $\bm{\Pi}_{h,k}^{{\rm curl}}(\bm{n}_F\times\cdot)$ maps from space $\bm{H}^s({\rm curl};T)$ to $\bigcup_{F\subset\partial T}[\mathbb{P}_k(F)]^3$ for each $T\in\mathcal{T}_h$. Actually, by denoting $\bm{\Pi}_{h,k}^{\Gamma,{\rm div}}:=\bm{\Pi}_{h,k}^{{\rm curl}}|_{\Gamma}$, we observe that $\bm{\Pi}_{h,k}^{\Gamma,{\rm div}}(\bm{n}_F\times\bm{v})$ defines a $H({\rm div})$-projection of $\bm{n}_F\times\bm{v}$ on the domain surface $\Gamma$.
\subsubsection{$H({\color{black}\rm curl})$-projection and $H^1$-projection on finite element spaces}

In the error analysis, we need the following $\bm H_0({\rm curl})$-conforming and $H^1$-conforming
interpolations.

\begin{lemma}[cf. {\cite[Proposition 4.5]{MR2194528}}]\label{pic}
	For any integer $k\ge 1$, let $\bm v_h\in [\mathcal P_{k}(\mathcal{T}_h)]^3$, there exists a function $\bm{\Pi}_{h,k}^{\rm curl,c}\bm v_h\in [\mathcal P_{k}(\mathcal{T}_h)]^3\cap \bm H_0({\rm curl};\Omega)
	$ such that
	\begin{align}
	\|\bm{\Pi}_{h,k}^{\rm curl,c}\bm v_h-\bm v_h\|_0&\lesssim \|h_F^{1/2}\bm n\times[\![\bm v_h]\!]\|_{0,\mathcal{F}_h},\label{ic1}\\
	\|\nabla_h\times(\bm{\Pi}_{h,k}^{\rm curl,c}\bm v_h-\bm v_h)\|_{0}&\lesssim \|h_F^{-1/2}\bm n\times[\![\bm v_h]\!]\|_{0,\mathcal{F}_h},\label{ic2}
	\end{align}
	with a constant $C>0$ independent of the mesh size.
\end{lemma}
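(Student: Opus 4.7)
The plan is to construct $\bm{\Pi}_{h,k}^{\rm curl,c}\bm v_h$ via an Oswald-type averaging of the Nédélec-type degrees of freedom. The conforming subspace $[\mathcal P_{k}(\mathcal{T}_h)]^3\cap \bm H_0({\rm curl};\Omega)$ sits inside the second-kind Nédélec element whose local DOFs on each tetrahedron $T$ consist of tangential edge moments on $E\subset\partial T$, tangential face moments on $F\subset\partial T$, and volume moments in the interior of $T$. Given $\bm v_h\in[\mathcal P_{k}(\mathcal{T}_h)]^3$, I would define $\bm{\Pi}_{h,k}^{\rm curl,c}\bm v_h$ by prescribing DOF values as follows: at every interior edge DOF or interior face DOF, take the arithmetic mean of the DOF values produced by each element sharing that edge or face; at every boundary edge or face DOF, set the value to $0$; volume DOFs are left unchanged. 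By the definition of the Nédélec DOFs, agreement of tangential moments across each interior face forces continuity of the tangential trace, hence $\bm{\Pi}_{h,k}^{\rm curl,c}\bm v_h\in \bm H_0({\rm curl};\Omega)$.

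For the $L^2$ bound \eqref{ic1}, I would argue element by element. On each $T$, $\bm \xi_T:=(\bm{\Pi}_{h,k}^{\rm curl,c}\bm v_h-\bm v_h)|_T$ belongs to $[\mathcal P_k(T)]^3$. By norm equivalence on the reference element composed with a Piola or affine scaling,
\[
\|\bm\xi_T\|_{0,T}^2\lesssim \sum_{F\subset\partial T}h_T\,\|\bm\xi_T\|_{0,F}^2,
\]
up to analogous edge terms absorbed via the trace inequality $\|\bm w\|_{0,E}\lesssim h_T^{-1/2}\|\bm w\|_{0,F}$ for polynomials. At each shared face or edge, the discrepancy of a DOF equals the mean of differences of $\bm v_h$ taken from $T$ and from each neighbour $T'$; since tangential moments are all that enter, each such difference is controlled by $\|\bm n\times[\![\bm v_h]\!]\|_{0,F'}$ on the appropriate face. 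Collecting the contributions and summing over $T\in\mathcal T_h$ (using shape regularity to bound the number of neighbours), I obtain \eqref{ic1}. The curl estimate \eqref{ic2} then follows from the elementwise inverse inequality $\|\nabla\times\bm w\|_{0,T}\lesssim h_T^{-1}\|\bm w\|_{0,T}$ applied to $\bm \xi_T$, which turns the $h_F^{1/2}$ weight into $h_F^{-1/2}$.

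The main obstacle is the treatment of the edge DOFs, a genuinely three-dimensional complication with no 2D analogue. An interior edge $E$ in a tetrahedral mesh is shared by an a priori unknown (though uniformly bounded by shape regularity) number of tetrahedra, and its DOF discrepancies must be charged to face jumps, not to some ``edge jump'' that does not appear in the right-hand side of \eqref{ic1}--\eqref{ic2}. The standard device is to write the difference of two edge values taken in non-adjacent tetrahedra as a telescoping sum around the ring of elements surrounding $E$, each summand being a jump across one face of that ring, and then to dominate each edge-$L^2$ norm of the jump by the face-$L^2$ norm through the polynomial trace inequality above. Once this step is carried out cleanly, the remaining work is routine scaling and Cauchy--Schwarz, and this is precisely the construction furnished by \cite[Proposition 4.5]{MR2194528}, which we invoke.
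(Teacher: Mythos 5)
The paper gives no proof of this lemma at all---it is stated as a citation to \cite[Proposition 4.5]{MR2194528}---and your sketch faithfully reconstructs the standard averaging-operator argument of that reference: Oswald averaging of the second-kind N\'ed\'elec edge/face moments, zeroing the boundary DOFs, scaling plus the discrete trace inequality to charge each DOF discrepancy to tangential face jumps (with the telescoping around interior edges correctly identified as the only delicate 3D step), and an inverse inequality for \eqref{ic2}. So your proposal is correct and takes essentially the same route as the source the paper relies on; there is nothing to reconcile.
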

\begin{lemma}[ cf. {\cite[Theorem 2.2]{ick}}]  For all $q_h\in P_h$, there exists an interpolation operator $\mathcal{I}_k: P_h\to P_h\cap H^1_0(\Omega)$ such that
	\begin{align}
	\|\nabla  q_h-\nabla \mathcal{I}_k q_h\|_{_{\mathcal T_h}}\lesssim \|h_F^{-1/2}[\![q_h]\!]\|_{\mathcal{F}_h},\label{ick}
	\end{align}
	where $[\![q_h]\!]$ stands for the jump of $q_h$ on $\mathcal F_h$.
\end{lemma}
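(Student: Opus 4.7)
The plan is to construct $\mathcal{I}_k q_h$ as an Oswald-type nodal averaging operator and then convert the resulting nodal discrepancies into face-jump terms via a standard scaling argument combined with an inverse inequality. First, I would fix the Lagrange basis associated with the $H^1$-conforming subspace $P_h \cap H^1_0(\Omega)$, i.e. the nodal values at the set $\mathcal N$ of Lagrange points of degree $k$. For each $x \in \mathcal N \cap \Omega$, I define $(\mathcal I_k q_h)(x)$ to be the arithmetic average of $q_h|_T(x)$ over the patch $\omega_x = \{T \in \mathcal T_h : x \in \overline T\}$; for $x \in \mathcal N \cap \Gamma$, I set $(\mathcal I_k q_h)(x) = 0$ to enforce the homogeneous boundary condition. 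This produces a well-defined $\mathcal I_k q_h \in P_h \cap H^1_0(\Omega)$ by construction.

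Next, I would control $q_h - \mathcal I_k q_h$ element by element. On a fixed $T \in \mathcal T_h$ the difference is a polynomial in $\mathcal P_k(T)$; at each interior node $x$ its value equals $q_h|_T(x)$ minus the patch average, which by a short telescoping identity can be written as a finite linear combination of the nodal traces of $[\![q_h]\!]$ along faces in $\omega_x$. For boundary nodes $x \in \Gamma$ the nodal value reduces to $q_h|_T(x) = [\![q_h]\!](x)$ under the usual convention $[\![q_h]\!]|_{F \subset \Gamma} = q_h|_F$. Using norm equivalence on the finite-dimensional space $\mathcal P_k(\widehat T)$ pulled back to $T$ by the affine map, together with $\|[\![q_h]\!]\|_{0,F}^2 \sim h_F^2 \sum_{x \in F \cap \mathcal N} |[\![q_h]\!](x)|^2$, I obtain
\[
\|q_h - \mathcal I_k q_h\|_{0,T}^2 \;\lesssim\; h_T^3 \!\!\sum_{x \in \overline T \cap \mathcal N} \!\! \bigl|q_h(x) - (\mathcal I_k q_h)(x)\bigr|^2 \;\lesssim\; h_T^2 \sum_{F \in \mathcal F(\omega_T)} \|[\![q_h]\!]\|_{0,F}^2,
\]
where $\omega_T$ denotes the set of elements sharing at least a vertex with $T$.

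Finally, I would apply the elementwise inverse inequality $\|\nabla v_h\|_{0,T} \lesssim h_T^{-1}\|v_h\|_{0,T}$ with $v_h = q_h - \mathcal I_k q_h$, square, sum over $T \in \mathcal T_h$, and invoke shape regularity (so that each face is counted a uniformly bounded number of times) to conclude
\[
\|\nabla_h q_h - \nabla \mathcal I_k q_h\|_{\mathcal T_h}^2 \;\lesssim\; \sum_{F \in \mathcal F_h} h_F^{-1}\|[\![q_h]\!]\|_{0,F}^2,
\]
which is the claimed bound. The main obstacle is the careful bookkeeping at boundary Lagrange nodes: one must verify that imposing $\mathcal I_k q_h = 0$ on $\Gamma$ yields nodal discrepancies that are genuinely controlled by jumps (under the boundary convention for $[\![\cdot]\!]$) rather than by uncontrolled boundary traces. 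Once this is handled, the remaining argument is a routine scaling plus inverse-inequality computation, standard for Oswald-type interpolants.
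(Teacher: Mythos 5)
The paper does not prove this lemma at all; it is quoted directly from the cited reference (Theorem 2.2 of \cite{ick}), and your Oswald-type nodal-averaging construction is precisely the standard argument by which that cited result is established, so your approach is essentially ``the'' proof rather than an alternative to it. The construction, the boundary-node convention $[\![q_h]\!]|_{F\subset\Gamma}=q_h|_F$, and the final inverse-inequality step are all correct. One exponent in your intermediate $L^2$ bound is off: in three dimensions the combination of $\|\phi_x\|_{0,T}^2\sim h_T^3$ with $|[\![q_h]\!](x)|^2\lesssim h_F^{-2}\|[\![q_h]\!]\|_{0,F}^2$ yields
\begin{equation*}
\|q_h-\mathcal I_k q_h\|_{0,T}^2\;\lesssim\; h_T\sum_{F\in\mathcal F(\omega_T)}\|[\![q_h]\!]\|_{0,F}^2,
\end{equation*}
not $h_T^2\sum_F\|[\![q_h]\!]\|_{0,F}^2$ as you wrote; the stronger version is false (test a piecewise constant with a unit jump across one face). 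This is harmless here --- after applying $\|\nabla v\|_{0,T}\lesssim h_T^{-1}\|v\|_{0,T}$ the corrected exponent gives exactly the claimed bound $\sum_F h_F^{-1}\|[\![q_h]\!]\|_{0,F}^2$, whereas your version would give a (spuriously) stronger one --- but the display should be fixed.
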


\section{HDG finite element method}

\subsection{HDG method}

For any integers $k\ge 1$,  we introduce the following finite dimensional spaces:
\begin{align*}
\bm{R}_{h}&=[\mathcal{P}_{k-1}(\mathcal{T}_h)]^3, \\
\bm{U}_{h}&= [\mathcal{P}_k(\mathcal{T}_h)]^3,\\
\widehat{\bm{U}}_{h}&=\{\widehat{\bm{v}}_h\in [\mathcal{P}_{k}(\mathcal{F}_h)\color{black} ]^3  \color{black}: \widehat{\bm{v}}_h\cdot\bm{n}|_{\mathcal{F}_h}=0\} ,\\
\widehat{\bm{U}}_{h}^{\widetilde{\bm{g}}}&=\{\widehat{\bm{v}}_h\in \widehat{\bm{U}}_{h}:
\bm{n}_{\Gamma}\times\widehat{\bm{v}}_h|_{\Gamma}=\bm{\Pi}_{h,k}^{\Gamma,{\rm div}}\widetilde{\bm{g}}  \} ,\,\widetilde{\bm{g}}=\bm{0},\bm{g}_1,\\
\widehat{\bm{C}}_{h}&=\{\widehat{\bm{v}}_h\in [\mathcal{P}_{k-1}(\mathcal{F}_h)\color{black} ]^3  \color{black}: \widehat{\bm{v}}_h\cdot\bm{n}|_{\mathcal{F}_h}=0\} ,\\
\widehat{\bm{C}}_{h}^{\widetilde{\bm{g}}}&=\{\widehat{\bm{v}}_h\in \widehat{\bm{C}}_{h}:
\bm{n}_{\Gamma}\times\widehat{\bm{v}}_h|_{\Gamma}=\bm{\Pi}_{k-1}^{\partial}\widetilde{\bm{g}}  \} ,\,\widetilde{\bm{g}}=\bm{0},\bm{g}_2,\\
{P}_{h}&=\mathcal{P}_{k}(\mathcal{T}_h),\\
\widehat{P}_h&=\mathcal{P}_{k}(\mathcal{F}_h),\\
\widehat{P}^0_{h}&=\{\widehat{q}_h\in \widehat{P}_h:\widehat{q}_h|_{\Gamma}=0\},
\end{align*}
where
\begin{align*}
\mathcal{P}_j(\mathcal{T}_h)&=\{q_h\in L^2(\Omega): q_h|_T\in \mathcal{P}_j(T),\ \forall T\in\mathcal{T}_h\},\\
\mathcal{P}_j(\mathcal{F}_h)&=\{q_h\in L^2(\mathcal{F}_h): q_h|_F\in \mathcal{P}_j(F),\ \forall F\in\mathcal{F}_h\}.
\end{align*}

The HDG finite element method for \eqref{source} reads:
\par\smallskip\noindent
For all $(\bm{s}_h,\bm{v}_h,\widehat{\bm{v}}_h,\widehat{\bm d}_h,q_h,\widehat{q}_h)\in \bm{R}_h\times\bm{U}_h\times\widehat{\bm{U}}_h^{\bm{0}}\times\widehat{\bm{C}}_h^{\bm{0}}\times {P}_h\times \widehat{{P}}^0_h$,
find $(\bm{r}_h,\bm{u}_h,\widehat{\bm{u}}_h,\widehat{\bm {c}}_h,p_h,\widehat{p}_h) \in \bm{R}_h\times\bm{U}_h\times\widehat{\bm{U}}_h^{\bm{g}_1}\times\widehat{\bm{C}}_h^{\bm{g}_2}\times {P}_h\times \widehat{{P}}^0_h$
such that
\begin{subequations}\label{HDG-curl}
	\begin{align}
	a_h(\bm{r}_h,\bm{s}_h)+b_h(\bm{u}_h,\widehat{\bm{u}}_h,\widehat{\bm c}_h;\bm{s}_h)&=0,\label{fhm1}\\
	b_h(\bm{v}_h,\widehat{\bm{v}}_h,\widehat{\bm d}_h;\bm{r}_h)+c_h(p_h,\widehat{p}_h;\bm{v}_h)-s^u_h(  \bm{u}_h,\widehat{\bm{u}}_h,\widehat{\bm u}_h; \bm{v}_h,\widehat{\bm{v}}_h,\widehat{\bm d}_h )&=-(\bm{f},\bm{v}_h),\\
	c_h(q_h,\widehat{q}_h;\bm{u}_h)+s_h^p(p_h,\widehat{p}_h;q_h,\widehat{q}_h)&=(g, q_h),\label{fhm3}
	\end{align}
\end{subequations}
where
\begin{align}
a_h(\bm{r}_h,\bm{s}_h)&=(\bm{r}_h,\bm{s}_h)_{\mathcal T_h},\label{a_h}\nonumber\\
b_h(\bm{u}_h,\widehat{\bm{u}}_h,\widehat{\bm c}_h;\bm{s}_h)&=-(\bm{u}_h,\nabla \times\nabla \times\bm{s}_h)_{\mathcal T_h}-\langle \bm{n}\times\widehat{\bm{u}}_h,\nabla \times\bm{s}_h \rangle_{\partial\mathcal{T}_h}
-\langle \bm{n}\times\widehat{\bm {c}}_h,\bm{s}_h \rangle_{\partial\mathcal{T}_h}\nonumber\nonumber\\
c_h(q_h,\widehat{q}_h;\bm{u}_h)&=(\nabla \cdot\bm{u}_h, q_h)_{\mathcal T_h}-\langle \bm{n}\cdot\bm{u}_h,\widehat{q}_h \rangle_{\partial\mathcal{T}_h},\nonumber\\
s_h^u(  \bm{u}_h,\widehat{\bm{u}}_h,\widehat{\bm c}_h; \bm{v}_h,\widehat{\bm{v}}_h,\widehat{\bm d}_h )&=\langle h_F^{-3} \bm{n}\times(\bm{u}_h-\widehat{\bm{u}}_h),\bm{n}\times(\bm{v}_h-\widehat{\bm{v}}_h)\rangle_{\partial\mathcal{T}_h}\nonumber\nonumber\\
&\qquad+\langle h_F^{-1} \bm{n}\times(\nabla \times\bm{u}_h-\widehat{\bm {c}}_h), \bm{n}\times(\nabla \times\bm{v}_h-\widehat{\bm d}_h)\rangle_{\partial\mathcal{T}_h},\nonumber\\
s_h^p(p_h,\widehat{p}_h;q_h,\widehat{q}_h)&=\langle h_F^{-1}(p_h-\widehat{p}_h,q_h-\widehat{q}_h) \rangle_{\partial\mathcal{T}_h}. \nonumber
\end{align}

To simplify notation, we define
\begin{align}
\bm{\sigma} &:=(\bm{r} ,\bm{u} ,\bm{u},{\nabla\times\bm{u}},p ,{p }),\\
\bm{\sigma}_h &:=(\bm{r}_h ,\bm{u}_h ,\widehat{\bm{u}}_h ,\widehat{\bm {c}}_h,p_h ,\widehat{p}_h ),\label{eq:sigmah}\\
\bm{\tau}_h &:=(\bm{s}_h ,\bm{v}_h ,\widehat{\bm{v}}_h,\widehat{\bm d}_h,q_h ,\widehat{q_h }),\label{eq:tauh}\\
\bm{\Sigma}_h &:=\bm{R}_h\times\bm{U}_h\times\widehat{\bm{U}}_h\times\widehat{\bm{C}}_h\times P_h\times \widehat{P_h},\\
\bm{\Sigma}_h^{\bm{g}} &:=\bm{R}_h\times\bm{U}_h\times\widehat{\bm{U}}^{\bm{g}_1}_h\times\widehat{\bm{C}}^{\bm{g}_2}_h\times P_h\times \widehat{P}^0_h,\\
\bm{\Sigma}_h^{\bm{0}} &:=\bm{R}_h\times\bm{U}_h\times\widehat{\bm{U}}^{\bm{0}}_h\times\widehat{\bm{C}}^{\bm{0}}_h\times P_h\times \widehat{P}^0_h,
\end{align}
and
\begin{align}
B_h(\bm{\sigma}_h,\bm{\tau}_h)&:=a_h(\bm{r}_h,\bm{s}_h)+b_h(\bm{u}_h,\widehat{\bm{u}}_h,\widehat{\bm c}_h;\bm{s}_h)\nonumber\\
&\,\,\,\quad+b_h(\bm{v}_h,\widehat{\bm{v}}_h,\widehat{\bm d}_h;\bm{r}_h)+c_h(p_h,\widehat{p}_h;\bm{v}_h)-s_h^u(\bm{u}_h,\widehat{\bm{u}}_h,\widehat{\bm c}_h; \bm{v}_h,\widehat{\bm{v}}_h,\widehat{\bm d}_h )\label{B_h}\\
&\qquad+c_h(q_h,\widehat{q}_h;\bm{u}_h)
+s_h^p(p_h,\widehat p_h;q_h,\widehat q_h),\nonumber\\
F_h(\bm{\tau}_h)&:=-(\bm{f},\bm{v}_h)+(g,q_h).
\end{align}
Then the HDG scheme \eqref{fhm1}--\eqref{fhm3} can be rewritten as:
\par\smallskip\noindent
Find $\bm{\sigma}_h\in \bm{\Sigma}^{\bm{g}}_h$ such that
\begin{align}\label{Bh_HDG}
B_h(\bm{\sigma}_h,\bm{\tau}_h)=F_h(\bm{\tau}_h)\qquad\forall\, \bm{\tau}_h\in\bm{\Sigma}^{\bm{0}}_h.
\end{align}

\subsection{Stability analysis}
We define the following semi-norms on spaces $\bm{U}_h\times \widehat{\bm{U}}_h\times\widehat{\bm{C}}_h$ and $P_h\times \widehat{P}_h$:
\begin{align}
\|(\bm{v},\widehat{\bm{v}},\widehat{\bm d})\|_{U}&:=\big( \|(\bm{v},\widehat{\bm{v}},\widehat{\bm d})\|^2_{\text{{\rm curl}}}+\|\bm{v}\|^2_{\text{{\rm div}}}\big)^{\frac{1}{2}},\\
\|(q,\widehat{q})\|^2_{P}&:=\big(\| h_T\nabla  q\|_{_{\mathcal T_h}}+\| h_F^{1/2}(q-\widehat{q}_h)\|^2_{\partial\mathcal{T}_h}\big)^{\frac{1}{2}}\label{def-p},
\end{align}
where
\begin{align*}
\|(\bm{v},\widehat{\bm{v}},\bm{d})\|^2_{\text{{\rm curl}}}&:=\|\nabla\times\nabla \times\bm{v}\|_{\mathcal T_h}^2+\|h_F^{-3/2}(\bm{n}\times(\bm{v}-\widehat{\bm{v}}))\|^2_{\partial\mathcal{T}_h}\nonumber\\
&\qquad+\|h_F^{-1/2}(\bm{n}\times(\nabla \times\bm{v}-\widehat{\bm d}))\|^2_{\partial\mathcal{T}_h},\nonumber\\
\|\bm{v}\|^2_{\text{{\rm div}}}&:=\|h_T\nabla_{h}\cdot\bm{v}\|_{\mathcal T_h}^2+\|h_F^{1/2}[\![\bm{n}\cdot\bm{v}]\!]\|^2_{\mathcal{F}^o_h}.
\end{align*}
Then we define the semi-norms on $\bm{\Sigma}_h$ as:
\begin{align}
\|\bm{\sigma} \|_{\bm{\Sigma}_h}&:=\big(\|\bm{r} \|_{_{\mathcal T_h}}^2+\|(\bm{u} ,\widehat{\bm{u} },\widehat{\bm c})\|^2_{U}+\|(p ,\widehat{p })\|^2_{P}\big)^{\frac{1}{2}},\label{norm-lambda}\\
\|\bm{\tau} \|_{\bm{\Sigma}_h}&:=\big(\|\bm{s} \|_{_{\mathcal T_h}}^2+\|(\bm{v} ,\widehat{\bm{v} },\widehat{\bm d})\|^2_{U}+\|(q ,\widehat{q })\|^2_{P}\big)^{\frac{1}{2}}.\label{norm-gamma}
\end{align}

\begin{lemma} The semi-norm	$\|\cdot\|_{U}$	defines a norm on  the space $\bm{U}_h\times\widehat{\bm{U}}^{\bm{0}}_h\times\widehat{\bm{C}}^{\bm{0}}_h$.
\end{lemma}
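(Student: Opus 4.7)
The goal is to establish positive-definiteness: whenever $\|(\bm v,\widehat{\bm v},\widehat{\bm d})\|_U=0$ with $(\bm v,\widehat{\bm v},\widehat{\bm d})\in \bm U_h\times\widehat{\bm U}_h^{\bm 0}\times\widehat{\bm C}_h^{\bm 0}$, all three components vanish. The plan is to harvest all the vanishing terms from the definition of the norm, promote the piecewise field $\bm v$ to a global conforming one, then integrate by parts to kill $\nabla\times\bm v$, and finally use simple connectedness of $\Omega$ to conclude $\bm v=\bm 0$; the hat variables are then read off from the trace identities.

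\textbf{Step 1: extract consequences of the vanishing norm.} From $\|(\bm v,\widehat{\bm v},\widehat{\bm d})\|_U=0$ we obtain, termwise, that $\nabla\times\nabla\times\bm v=\bm 0$ on every $T\in\mathcal T_h$, that $\nabla_h\cdot\bm v=0$, that $[\![\bm n\cdot\bm v]\!]=0$ on each interior face, and that
\begin{align*}
\bm n\times(\bm v-\widehat{\bm v})=\bm 0,\qquad \bm n\times(\nabla\times\bm v-\widehat{\bm d})=\bm 0\quad\text{on }\partial\mathcal T_h.
\end{align*}
Since $\widehat{\bm v}$ is single-valued on interior faces, the first interface identity yields continuity of the tangential trace of $\bm v$, so $\bm v\in\bm H(\mathrm{curl};\Omega)$; combined with $\bm n_\Gamma\times\widehat{\bm v}|_\Gamma=\bm 0$, it gives $\bm v\in\bm H_0(\mathrm{curl};\Omega)$. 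The jump condition on $\bm n\cdot\bm v$ together with $\nabla_h\cdot\bm v=0$ places $\bm v\in\bm H(\mathrm{div}^0;\Omega)$.

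\textbf{Step 2: integration by parts to show $\nabla\times\bm v=\bm 0$.} On each $T$,
\begin{align*}
\int_T|\nabla\times\bm v|^2\,dx=\int_T(\nabla\times\nabla\times\bm v)\cdot\bm v\,dx+\int_{\partial T}(\bm n\times\nabla\times\bm v)\cdot\bm v\,dS.
\end{align*}
The volume term vanishes by Step 1. For the face term, I use $\bm n\times\nabla\times\bm v=\bm n\times\widehat{\bm d}$ together with the identity $(\bm n\times\bm a)\cdot\bm b=-(\bm n\times\bm b)\cdot\bm a$ and then $\bm n\times\bm v=\bm n\times\widehat{\bm v}$, obtaining
\begin{align*}
\sum_T\int_{\partial T}(\bm n\times\nabla\times\bm v)\cdot\bm v\,dS
=-\sum_T\int_{\partial T}(\bm n\times\widehat{\bm v})\cdot\widehat{\bm d}\,dS.
\end{align*}
Because $\widehat{\bm v}$ and $\widehat{\bm d}$ are single-valued on each interior face while $\bm n$ reverses orientation across it, interior face contributions cancel in pairs; on $\Gamma$ the boundary condition $\bm n_\Gamma\times\widehat{\bm v}=\bm 0$ kills the remainder. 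Hence $\nabla\times\bm v=\bm 0$ on $\Omega$.

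\textbf{Step 3: conclude $\bm v=\bm 0$ and then $\widehat{\bm v}=\widehat{\bm d}=\bm 0$.} Since $\Omega$ is a simply-connected Lipschitz polyhedron with connected boundary and $\bm v\in\bm H_0(\mathrm{curl};\Omega)\cap\bm H(\mathrm{div}^0;\Omega)$ satisfies $\nabla\times\bm v=\bm 0$, we may write $\bm v=\nabla\phi$ with $\phi\in H^1_0(\Omega)$ (constant tangential gradient on the connected $\Gamma$ lets us normalize $\phi|_\Gamma=0$); then $\Delta\phi=\nabla\cdot\bm v=0$ with zero boundary data forces $\phi=0$, so $\bm v=\bm 0$. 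With $\bm v=\bm 0$ and $\nabla\times\bm v=\bm 0$, the two trace identities in Step 1 reduce to $\bm n\times\widehat{\bm v}=\bm 0$ and $\bm n\times\widehat{\bm d}=\bm 0$ on every face; combined with $\widehat{\bm v}\cdot\bm n=\widehat{\bm d}\cdot\bm n=0$ from the definitions of $\widehat{\bm U}_h$ and $\widehat{\bm C}_h$, this gives $\widehat{\bm v}=\widehat{\bm d}=\bm 0$.

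The main obstacle is Step 2: verifying that all face contributions cancel requires carefully juggling the two interface identities and the boundary conditions on $\widehat{\bm v}$ and $\widehat{\bm d}$; the remaining pieces are standard manipulations once that cancellation is in hand.
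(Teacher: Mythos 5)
Your proof is correct and follows the natural route; it coincides with the paper's own argument in its final step, namely recovering $\widehat{\bm v}=\widehat{\bm d}=\bm 0$ from $\bm n\times\widehat{\bm v}=\bm n\times\widehat{\bm d}=\bm 0$ combined with the constraints $\widehat{\bm v}\cdot\bm n=\widehat{\bm d}\cdot\bm n=0$ built into $\widehat{\bm U}_h$ and $\widehat{\bm C}_h$. The paper dismisses the core step --- showing $\bm v=\bm 0$ --- with ``which can be checked easily,'' so your Steps 1--3 (promoting $\bm v$ to $\bm H_0(\mathrm{curl};\Omega)\cap\bm H(\mathrm{div}^0;\Omega)$, the elementwise integration by parts whose face terms cancel via the single-valuedness of $\widehat{\bm v},\widehat{\bm d}$ and the boundary condition, and the potential argument $\bm v=\nabla\phi$ with $\phi\in H^1_0(\Omega)$ harmonic on the simply-connected domain) supply precisely the argument the paper leaves implicit, and all of the individual manipulations check out.
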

\begin{proof}
	Let $(\bm{v}_h,\widehat{\bm{v}}_h,\widehat{\bm d}_h)\in \bm{U}_h\times\widehat{\bm{U}}_h\times\widehat{\bm G}_h$. It suffices to show that
	$\|(\bm{v}_h,\widehat{\bm{v}}_h,\widehat{\bm d}_h)\|_{U}=0$ leads to $(\bm{v}_h,\widehat{\bm{v}}_h,\widehat{\bm d}_h)=(\bm{0},\bm{0},\bm 0)$, which can be checked easily. Actually, note that
	$\bm{n}\times\widehat{\bm{v}}_h=\bm{0}$ and $\bm{n}\cdot\widehat{\bm{v}}_h=0$ lead to $\widehat{\bm{v}}_h=\bm{0}$; Similarly,
	$\bm{n}\times\widehat{\bm{d}}_h=\bm{0}$ and $\bm{n}\cdot\widehat{\bm{d}}_h=0$ lead to $\widehat{\bm{d}}_h=\bm{0}$.	
\end{proof}

\begin{lemma}
	The semi-norm $\|(\cdot,\cdot)\|_{P}$ defines a norm on ${P}_h\times\widehat{P}^0_h$. Moreover, for all $(q_h,\widehat{q}_h)\in {P}_h\times\widehat{{P}}^0_h$, there holds
	\begin{align}
	\|(q_h,\widehat{q}_h)\|^2_{P}\sim \| h_T \nabla\mathcal{I}_kq_h\|_{\mathcal T_h}^2 + \| h_F^{-1/2}(q_h-\widehat{q}_h)\|^2_{\partial\mathcal{T}_h}. \label{relation}
	\end{align}
\end{lemma}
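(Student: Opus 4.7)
The plan is to handle the two claims separately. For the norm property, I would suppose $(q_h,\widehat q_h)\in P_h\times\widehat P_h^0$ satisfies $\|(q_h,\widehat q_h)\|_P=0$ and show $(q_h,\widehat q_h)=(0,0)$. The vanishing of $\|h_T\nabla q_h\|_{\mathcal T_h}$ forces $q_h$ to be a constant on each simplex $T$, while the vanishing of the boundary term forces $q_h=\widehat q_h$ on $\partial T$ for every $T$. Since $\widehat q_h$ is single-valued on each interior face, two elements sharing a face must carry the same constant value of $q_h$; by connectedness of $\Omega$ this propagates to give that $q_h$ is a single global constant. Finally, using $\widehat q_h|_\Gamma=0$ together with $q_h|_{F}=\widehat q_h|_F$ on any boundary face $F$ forces that constant to be zero, so $q_h\equiv 0$ and consequently $\widehat q_h=0$ on all faces.

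For the equivalence \eqref{relation}, my plan is to insert $\pm\mathcal I_k q_h$ inside the broken gradient and invoke the interpolation estimate \eqref{ick}. A triangle inequality gives
\begin{align*}
\|h_T\nabla q_h\|_{\mathcal T_h}\le \|h_T\nabla\mathcal I_k q_h\|_{\mathcal T_h}+\|h_T\nabla(q_h-\mathcal I_k q_h)\|_{\mathcal T_h}.
\end{align*}
Applying \eqref{ick} element-wise together with shape regularity ($h_T\sim h_F$) yields
$\|h_T\nabla(q_h-\mathcal I_k q_h)\|_{\mathcal T_h}\lesssim \|h_F^{1/2}[\![q_h]\!]\|_{\mathcal F_h}$. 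Since $\widehat q_h$ is single-valued across each interior face and vanishes on $\Gamma$, on an interior face $F=\partial T^+\cap\partial T^-$ one can write $[\![q_h]\!]=(q_h^+-\widehat q_h)-(q_h^--\widehat q_h)$ and on a boundary face $[\![q_h]\!]=q_h-\widehat q_h$; therefore
$\|h_F^{1/2}[\![q_h]\!]\|_{\mathcal F_h}\lesssim \|h_F^{1/2}(q_h-\widehat q_h)\|_{\partial\mathcal T_h}\lesssim h\,\|h_F^{-1/2}(q_h-\widehat q_h)\|_{\partial\mathcal T_h}$. A bounded global mesh size then absorbs the factor $h$ into the constant, delivering one direction of \eqref{relation}. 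The reverse direction is obtained by swapping the roles of $q_h$ and $\mathcal I_k q_h$ in the triangle inequality and applying \eqref{ick} to the same perturbation.

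The main obstacle I expect is the careful bookkeeping of $h_F$ powers: the stabilization $s_h^p$ supplies $h_F^{-1/2}(q_h-\widehat q_h)$ on $\partial\mathcal T_h$, whereas \eqref{ick} naturally outputs $h_F^{-1/2}[\![q_h]\!]$ without the multiplicative $h_T$ present in the definition of $\|(q,\widehat q)\|_P$. Tracking these factors requires using $h_T\sim h_F$ from shape regularity and absorbing one power of $h$ using a uniform bound on the mesh size, so the constants in \eqref{relation} implicitly depend on the diameter of $\Omega$. Once this is reconciled, both the norm property and the equivalence follow by routine manipulation.
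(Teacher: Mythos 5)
Your proposal is correct and follows essentially the same route as the paper: both directions of \eqref{relation} are obtained by inserting $\pm\mathcal I_k q_h$ into the broken gradient, invoking \eqref{ick}, and converting $[\![q_h]\!]$ into $q_h-\widehat q_h$ via the single-valuedness of $\widehat q_h$ and its vanishing on $\Gamma$, while the norm property is deduced from $q_h$ being piecewise constant and matching $\widehat q_h$ on all faces (your explicit connectedness propagation just spells out a step the paper leaves implicit). Your bookkeeping of the $h_F$ powers is consistent with the corrected form of \eqref{def-p} used in the paper's own proof, so no further changes are needed.
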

\begin{proof}
	By combining the definition of $\|(\cdot,\cdot)\|_{P}$ in \eqref{def-p}, the estimate \eqref{ick} and the triangle inequality, we have
	\begin{align}
	\|(q_h,\widehat{q}_h)\|^2_{P}&=\|h_T \nabla q_h\|_{\mathcal T_h}^2+\|h_F^{-1/2}(q_h-\widehat{q}_h)\|^2_{\partial\mathcal{T}_h} \nonumber\\
	&\lesssim \|h_T (\nabla q_h-\nabla \mathcal{I}^c q_h)\|_{\mathcal T_h}^2+\|h_T \nabla \mathcal{I}^c q_h\|_{\mathcal T_h}^2 +\|h_F^{-1/2}(q_h-\widehat{q}_h)\|^2_{\partial\mathcal{T}_h} \nonumber\\
	&\lesssim \|h_F^{-1/2}[\![q_h]\!]\|^2_{0,\mathcal{F}_h}+\|h_T \nabla\mathcal{I}^c q_h\|_{\mathcal T_h}^2+\|h_F^{-1/2}(q_h-\widehat{q}_h)\|^2_{\partial\mathcal{T}_h} \nonumber\\
	&\lesssim \|h_T \nabla\mathcal{I}^c q_h\|_{\mathcal T_h}^2+\|h_F^{-1/2}(q_h-\widehat{q}_h)\|^2_{\partial\mathcal{T}_h} .
	\end{align}
	On the other hand,
	\begin{align}
	\|h_T \nabla\mathcal{I}^c q_h\|_{\mathcal T_h}^2 &\lesssim \|h_T (\nabla\mathcal{I}^c q_h-\nabla q_h)\|_{\mathcal T_h}^2+\|\nabla q_h\|_{\mathcal T_h}^2\nonumber\\
	&\lesssim \|h_F^{-1/2}[\![q_h]\!]\|^2_{0,\mathcal{F}_h}+\|h_T \nabla q_h\|_{\mathcal T_h}^2 \nonumber\\
	&\lesssim \|h_F^{-1/2}(q_h-\widehat{q}_h)\|^2_{\partial\mathcal{T}_h}+\|\nabla q_h\|_{\mathcal T_h}^2\nonumber\\
	&= \|(q_h,\widehat{q}_h)\|^2_{P}.
	\end{align}
	Therefore, the estimate \eqref{relation} holds.
	\par
	Next, we prove  $\|(\cdot,\cdot)\|_P$ is a norm on ${P}_h\times\widehat{P}^0_h$.  For any $(q_h, \widehat{q}_h)\in {P}_h\times\widehat{P}^0_h$ such that
	$\|h_T \nabla q_h\|_{\mathcal T_h}^2 +\|h_F^{-1/2}(q_h-\widehat{q}_h)\|^2_{\partial\mathcal{T}_h}=0$,
	we know that $q_h$ is piecewise constants, and $q_h=\widehat{q}_h$ on every face. Moreover, $q_h=\widehat{q}_h=0$ on boundary faces. Therefore, $q_h=\widehat{q}_h=0$. This completes the proof.
\end{proof}

\begin{theorem}[Discrete inf-sup condition]\label{Th45} The following stability estimates hold true for $B_h$.
	\begin{align}
	\sup_{\bm{\tau}_h\in\bm{\Sigma}^{\bm{0}}_h,\bm{\tau}_h\neq\bm{0}}\frac{B_h(\bm{\sigma}_h,\bm{\tau}_h)}{\|\bm{\tau}_h\|_{\bm{\Sigma}_h}}
	&\gtrsim  \|\bm{\sigma}_h\|_{\bm{\Sigma}_h},\label{lbb1}\\
	\sup_{\bm{\sigma}_h\in\bm{\Sigma}^{\bm{0}}_h,\bm{\sigma}_h\neq\bm{0}}\frac{B_h(\bm{\sigma}_h,\bm{\tau}_h)}{\|\bm{\sigma}_h\|_{\bm{\Sigma}_h}}
	&\gtrsim  \|\bm{\tau}_h\|_{\bm{\Sigma}_h}. \label{lbb2}
	\end{align}
\end{theorem}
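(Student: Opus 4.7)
The bilinear form $B_h$ is symmetric: $a_h$, $s_h^u$ and $s_h^p$ are each symmetric in their two slots, and under $\bm\sigma_h\leftrightarrow\bm\tau_h$ in \eqref{B_h} the two $b_h$-terms are merely exchanged (as are the two $c_h$-terms), so their sums are invariant. Hence \eqref{lbb1} and \eqref{lbb2} are equivalent, and I only prove \eqref{lbb1}. Given $\bm\sigma_h=(\bm r_h,\bm u_h,\widehat{\bm u}_h,\widehat{\bm c}_h,p_h,\widehat p_h)\in\bm\Sigma_h^{\bm 0}$, the plan is to build an admissible test function $\bm\tau_h=\bm\tau_h^{(1)}+\sum_{i\ge 2}\varepsilon_i\bm\tau_h^{(i)}\in\bm\Sigma_h^{\bm 0}$ with small fixed constants $\varepsilon_i>0$, where each $\bm\tau_h^{(i)}$ activates exactly one component of $\|\bm\sigma_h\|_{\bm\Sigma_h}^2$ and the off-diagonal cross contributions are absorbed by Young's inequality.

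\textbf{Sign-flipped self-test.} Take $\bm\tau_h^{(1)}:=(\bm r_h,-\bm u_h,-\widehat{\bm u}_h,-\widehat{\bm c}_h,p_h,\widehat p_h)$, which lies in $\bm\Sigma_h^{\bm 0}$ because all boundary constraints are homogeneous. Bilinearity of $b_h$ and $c_h$ produces the cancellations $b_h(\bm u_h,\widehat{\bm u}_h,\widehat{\bm c}_h;\bm r_h)+b_h(-\bm u_h,-\widehat{\bm u}_h,-\widehat{\bm c}_h;\bm r_h)=0$ and $c_h(p_h,\widehat p_h;-\bm u_h)+c_h(p_h,\widehat p_h;\bm u_h)=0$, while the leading minus sign in front of $s_h^u$ in \eqref{B_h} combined with the sign flip of the test turns $-s_h^u(\bm u_h,\widehat{\bm u}_h,\widehat{\bm c}_h;-\bm u_h,-\widehat{\bm u}_h,-\widehat{\bm c}_h)$ into $+s_h^u(\bm u_h,\widehat{\bm u}_h,\widehat{\bm c}_h;\bm u_h,\widehat{\bm u}_h,\widehat{\bm c}_h)$. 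Consequently
\[
B_h(\bm\sigma_h,\bm\tau_h^{(1)})=\|\bm r_h\|^2_{\mathcal T_h}+s_h^u(\bm u_h,\widehat{\bm u}_h,\widehat{\bm c}_h;\bm u_h,\widehat{\bm u}_h,\widehat{\bm c}_h)+s_h^p(p_h,\widehat p_h;p_h,\widehat p_h),
\]
with $\|\bm\tau_h^{(1)}\|_{\bm\Sigma_h}=\|\bm\sigma_h\|_{\bm\Sigma_h}$. This controls $\|\bm r_h\|^2$ together with the three stabilization contributions in $\|\bm\sigma_h\|_{\bm\Sigma_h}^2$.

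\textbf{Partial tests for the remaining norms.} The missing pieces of $\|\bm\sigma_h\|_{\bm\Sigma_h}^2$ are $\|\nabla_h\times\nabla_h\times\bm u_h\|^2$, $\|h_T\nabla_h\cdot\bm u_h\|^2+\|h_F^{1/2}[\![\bm n\cdot\bm u_h]\!]\|^2_{\mathcal F_h^o}$, and (via \eqref{relation}) $\|h_T\nabla\mathcal I_kp_h\|^2$. For the first, I set $\bm\tau_h^{(2)}=(-\nabla_h\times\nabla_h\times\bm u_h,\bm 0,\bm 0,\bm 0,0,0)$, which is admissible because $\nabla_h\times\nabla_h\times\bm u_h\in[\mathcal P_{k-2}(\mathcal T_h)]^3\subset\bm R_h$; integrating by parts twice in $b_h$ produces $\|\nabla_h\times\nabla_h\times\bm u_h\|^2$ together with skeleton integrals against $\bm n\times(\bm u_h-\widehat{\bm u}_h)$ and $\bm n\times(\nabla\times\bm u_h-\widehat{\bm c}_h)$, absorbed into $\|\bm r_h\|^2+s_h^u(\bm u_h;\bm u_h)$ by trace and inverse inequalities combined with Young. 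The two divergence pieces are recovered by $\bm\tau_h^{(3)}=(\bm 0,\bm 0,\bm 0,\bm 0,h_T^2\nabla_h\cdot\bm u_h,0)$ and by $\bm\tau_h^{(4)}=(\bm 0,\bm 0,\bm 0,\bm 0,0,\widehat q_h)$, where $\widehat q_h=-h_F[\![\bm n\cdot\bm u_h]\!]$ on interior faces and $\widehat q_h=0$ on $\Gamma$; both produce the desired square-norms through $c_h$, with only an $s_h^p$-type cross term absorbable by Young. The pressure component is handled via an auxiliary discrete inf-sup for $c_h$: I construct a lifting $(p_h,\widehat p_h)\mapsto(\bm v_h^*,\widehat{\bm v}_h^*,\widehat{\bm d}_h^*)\in\bm U_h\times\widehat{\bm U}_h^{\bm 0}\times\widehat{\bm C}_h^{\bm 0}$ satisfying $c_h(p_h,\widehat p_h;\bm v_h^*)\gtrsim\|(p_h,\widehat p_h)\|_P^2$ and $\|(\bm v_h^*,\widehat{\bm v}_h^*,\widehat{\bm d}_h^*)\|_U\lesssim\|(p_h,\widehat p_h)\|_P$, and set $\bm\tau_h^{(5)}=(\bm 0,\bm v_h^*,\widehat{\bm v}_h^*,\widehat{\bm d}_h^*,0,0)$; the residual couplings $b_h(\bm v_h^*,\widehat{\bm v}_h^*,\widehat{\bm d}_h^*;\bm r_h)$ and $s_h^u(\bm u_h,\widehat{\bm u}_h,\widehat{\bm c}_h;\bm v_h^*,\widehat{\bm v}_h^*,\widehat{\bm d}_h^*)$ are absorbed by Young into the already controlled $\|\bm r_h\|^2$, $s_h^u(\bm u_h;\bm u_h)$ and $\|\nabla_h\times\nabla_h\times\bm u_h\|^2$.

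\textbf{Main obstacle.} The technical heart of the proof is the construction of the pressure lifting used in $\bm\tau_h^{(5)}$, with constants independent of $h$. The natural candidate $\bm v_h^*=-h_T^2\nabla\mathcal I_kp_h$ realizes the volumetric identity $-(\bm v_h^*,\nabla p_h)_{\mathcal T_h}=\|h_T\nabla\mathcal I_kp_h\|^2$ up to an $s_h^p$-correction controlled by \eqref{ick}, and is compatible with the homogeneous boundary condition because $\mathcal I_kp_h\in H^1_0(\Omega)$ forces $\bm n_\Gamma\times\nabla\mathcal I_kp_h=\bm 0$ on $\Gamma$. The difficulty is that keeping $\|(\bm v_h^*,\widehat{\bm v}_h^*,\widehat{\bm d}_h^*)\|_U\lesssim\|h_T\nabla\mathcal I_kp_h\|$ and preventing a loss of a power of $h^{-1}$ in the skeleton term left by $b_h(\bm v_h^*,\widehat{\bm v}_h^*,\widehat{\bm d}_h^*;\bm r_h)$ (where the elementwise identity $\nabla_h\times(h_T^2\nabla\mathcal I_kp_h)=\bm 0$ has already killed the volumetric part) forces one to choose the skeleton traces $\widehat{\bm v}_h^*,\widehat{\bm d}_h^*$ with great care---for instance $\widehat{\bm v}_h^*$ as the tangential projection of the arithmetic average of $\bm v_h^*$ across each face---and to exploit both the single-valuedness of the tangential trace of $\nabla\mathcal I_kp_h$ across faces (coming from the $H^1$-conformity of $\mathcal I_k$) and the elementwise constancy of $h_T$. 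Verifying that the resulting tangential jump $\bm n\times(\bm v_h^*-\widehat{\bm v}_h^*)$ scales at the right rate under shape regularity and that all admissibility conditions defining $\bm\Sigma_h^{\bm 0}$ are respected is the most delicate bookkeeping of the proof.
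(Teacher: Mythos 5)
Your decomposition is essentially the paper's: the sign-flipped self-test, the test $(-\nabla\times\nabla\times\bm u_h,\bm 0,\dots)$ in the $\bm r$-slot, the divergence test $(h_T^2\nabla\cdot\bm u_h,\,h_F[\![\bm n\cdot\bm u_h]\!])$ in the pressure slots (which the paper takes as a single step rather than your two), and a pressure-gradient test in the velocity slots, all combined with fixed weights and Young absorption; symmetry of $B_h$ then gives the second inequality, exactly as you argue. The only substantive difference is that you leave the pressure lifting abstract and flag its construction as the main obstacle, whereas the paper resolves it immediately by exploiting the $H^1_0$-conformity of $\mathcal I_kp_h$: it takes $\bm\tau_h^4=(\bm 0,-\nabla\mathcal I_kp_h,-\bm n\times\nabla\mathcal I_kp_h\times\bm n,\bm 0,0,0)$, so that the hatted velocity is just the (single-valued, vanishing on $\Gamma$) tangential trace of the volume field and the hatted curl variable can be taken to be $\bm 0$ since $\nabla\times\nabla\mathcal I_kp_h=\bm 0$ elementwise. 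With that choice the tangential jumps $\bm n\times(\bm v_h^*-\widehat{\bm v}_h^*)$ and $\bm n\times(\nabla\times\bm v_h^*-\widehat{\bm d}_h^*)$ vanish identically, so no face averaging and no delicate scaling argument are needed; the residual coupling $b_h(\bm v_h^*,\widehat{\bm v}_h^*,\widehat{\bm d}_h^*;\bm r_h)$ is bounded directly by $\|\bm r_h\|_{\mathcal T_h}$ and the stabilization terms after integration by parts. So the obstacle you anticipate is not present in the paper's route; your candidate $-h_T^2\nabla\mathcal I_kp_h$ would also work (and is the scaling consistent with the $h_T$-weighted pressure seminorm as written in the norm definition), provided you make the same trivial choices of traces rather than averages.
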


\begin{proof}
	We use the following five steps to derive \eqref{lbb1}--\eqref{lbb2}.

	\textbf{Step one:}
	
	Taking $\bm{\tau}^1_h=(\bm{r}_h ,-\bm{u}_h ,-\widehat{\bm{u}}_h ,-\widehat{\bm {c}}_h,p_h ,\widehat{p}_h )\in\bm{\Sigma}^{\bm{0}}_h$, then by the definitions of $\bm{\sigma}_h$,  $B_h$ and the norm $\|\cdot\|_{\bm{\bm{\Sigma}}_h}$ (cf. \eqref{eq:sigmah}, \eqref{B_h} and \eqref{norm-lambda}--\eqref{norm-gamma}), we have
	\begin{align}\label{eq:tauh1}
	\|\bm{\tau}_h^1\|_{\bm{\Sigma}_h}=\|\bm{\sigma}_h\|_{\bm{\Sigma}_h},
	\end{align}
	and
	\begin{align}\label{eq:Bh1}
	\begin{split}
	B_h(\bm{\sigma}_h,\bm{\tau}^1_h)&=\|\bm{r}_h\|_{\mathcal T_h}^2+\|h_F^{-3/2}\bm{n}\times(\bm{u}_h-\widehat{\bm{u}}_h)\|^2_{\partial\mathcal{T}_h}\\
	&\quad+\|h_F^{-1/2}(\bm{n}\times(\nabla \times\bm{u}_h-\widehat{\bm {c}}_h))\|^2_{\partial\mathcal{T}_h}+\|h_F^{-1/2}(p_h-\widehat{p}_h)\|^2_{\partial\mathcal{T}_h}.
	\end{split}
	\end{align}

	\textbf{Step two:}
	
	By taking $\bm{\tau}^2_h=(-\nabla \times\nabla \times\bm{u}_h,\bm{0},\bm{0},\bm{0},0,0) \in\bm{\Sigma}^{\bm{0}}_h$ we have
	\begin{align}\label{eq:tauh2}
	\|\bm{\tau}_h^2\|_{\bm{\Sigma}_h}=\|\nabla \times\nabla \times\bm{u}_h\|_{_{\mathcal T_h}}\le\|\bm{\sigma}_h\|_{\bm{\Sigma}_h}.
	\end{align}
	By the definition of $B_h$, integration by parts and inverse inequality, we have
	\begin{align}\label{eq:Bh2}
	\begin{split}
	B_h(\bm{\sigma}_h,\bm{\tau}^2_h)&=-(\bm{r}_h,\nabla \times\nabla \times\bm{u}_h)+\|\nabla \times\nabla \times\bm{u}_h\|_{\mathcal T_h}^2\\
	&\quad\,\,\,+\langle\nabla \times\nabla \times\nabla \times\bm{u}_h,\bm{n}\times(\bm{u}_h- \widehat{\bm{u}}_h  )  \rangle_{\partial\mathcal{T}_h}\\
	&\qquad+\langle\nabla \times\nabla \times\bm{u}_h,\bm{n}\times(\nabla \times\bm{u}_h- \widehat{\bm {c}}_h  )  \rangle_{\partial\mathcal{T}_h}\\
	&\ge \frac{1}{2}\|\nabla \times\nabla \times\bm{u}_h\|_{\mathcal T_h}^2-C_1\|\bm{r}_h\|_{\mathcal T_h}^2\\
	&\quad-C_1 \|h_F^{-3/2}\bm{n}\times(\bm{u}_h-\widehat{\bm{u}}_h)\|^2_{\partial\mathcal{T}_h}\\
	&\quad-C_1 \|h_F^{-1/2}\bm{n}\times(\nabla \times\bm{u}_h-\widehat{\bm {c}}_h)\|^2_{\partial\mathcal{T}_h}.
	\end{split}
	\end{align}

	\textbf{Step three:}
	
	Let $r_h=h_T^2\nabla \cdot\bm{u}_h$, $\widehat{r}_h=h_F[\![\bm{n}\cdot \bm{u}_h]\!]$ on $\mathcal{F}_h^o$ and $\widehat{r}_h=0$ on $\Gamma$. Taking $\bm{\tau}^3_h=(\bm{0},\bm{0},\bm{0},\bm{0},r_h,\widehat{r}_h)\in\bm{\Sigma}^{\bm{0}}_h$, then by the definition of $\|\cdot\|_{\bm{\bm{\Sigma}}_h}$ and inverse inequality we have
	\begin{align}\label{eq:tauh3}
	\begin{split}
	\|\bm{\tau}^3_h\|_{\bm{\Sigma}_h}^2	&= \|\nabla  r_h\|^2_{0}+ \|h_F^{-1/2}(r_h-\widehat{r}_h)\|^2_{\partial\mathcal{T}_h}\\
	&\lesssim \| h_T\nabla \cdot\bm{u}_h\|_{\mathcal T_h}^2+\|h_F^{1/2}[\![\bm{n}\cdot\bm{u}_h]\!]\|^2_{0,\mathcal{F}_h^o}\\
	&\le \|\bm{\sigma}_h\|_{\bm{\Sigma}_h}^2.
	\end{split}
	\end{align}
	Moreover,
	\begin{align}\label{eq:Bh3}
	\begin{split}
	B_h(\bm{\sigma}_h,\bm{\tau}^3_h)&=\|h_T\nabla\cdot\bm{u}_h\|_{\mathcal T_h}^2+\| h_F^{1/2}[\![\bm{n}\cdot\bm{u}_h]\!]\|^2_{0,\mathcal{F}_h^o} \\
	&\qquad+\langle h_F^{-1}(p_h-\widehat{p}_h),r_h-\widehat{r}_h \rangle_{\partial\mathcal{T}_h}\\
	&\ge\frac{1}{2}\|\bm{u}_h\|^2_{{\rm div}}-C_2\|h_F^{-1/2}(p_h-\widehat{p}_h)\|^2_{\partial\mathcal{T}_h}.
	\end{split}
	\end{align}

	\textbf{Step four:}
	
	We then take $\bm{\tau}^4_h=(\bm{0},-\nabla\mathcal{I}_k p_h,-\bm n\times\nabla \mathcal{I}_kp_h\times\bm n,\bm{0},0,0)\in\bm{\Sigma}^{\bm{0}}_h$. Similar to previous steps, we have
	\begin{align}\label{eq:tauh4}
	\begin{split}
	\|\bm{\tau}^4_h\|_{\bm{\Sigma}_h}^2&= \|h_T\nabla \cdot\nabla  \mathcal{I}_kp_h\|_{\mathcal T_h}^2+ \|h_F^{1/2}[\![\nabla  \mathcal{I}_kp_h\cdot\bm{n}]\!]\|_{0,\mathcal{F}_h^o}\\
	&\lesssim \|\nabla  \mathcal{I}_kp_h\|_{\mathcal T_h}^2\\
	&\lesssim \|\bm{\sigma}_h\|_{\bm{\Sigma}_h}^2.
	\end{split}
	\end{align}
	Moreover,
	\begin{align}\label{eq:Bh4}
	\begin{split}
	B_h(\bm{\sigma}_h,\bm{\tau}^4_h)&=(\nabla\mathcal{I}_k p_h,\nabla \times\nabla \times\bm{r}_h)_{\mathcal T_h}+\langle\bm n\times(\bm n\times\nabla \mathcal I_kp_h\times\bm n ),\nabla \times \bm r_h \rangle_{\partial\mathcal{T}_h}\\
	&\quad-(\nabla \cdot \nabla \mathcal{I}_k p_h, p_h)_{\mathcal T_h}+ \langle \bm{n}\cdot \nabla\mathcal{I}_k p_h,\widehat{p}_h \rangle_{\partial\mathcal{T}_h}\\
	&=\|\nabla\mathcal{I}_kp_h\|^2_{_{\mathcal T_h}}+ (\nabla  p_h- \nabla \mathcal{I}_kp_h,\nabla  p_h)_{\mathcal T_h}+  \langle  \bm{n}\cdot\nabla\mathcal{I}_kp_h,\widehat{p}_h -p_h \rangle_{\partial\mathcal{T}_h}\\
	&\ge \frac{1}{2}\| \nabla  p_h\|_{\mathcal T_h}^2-C_3  \| h_F^{-1/2}(p_h-\widehat{p}_h)\|^2_{\partial\mathcal{T}_h}-C_3\|\bm{r}_h\|_{\mathcal T_h}^2\\
	&\quad-C_3\| h_F^{-1/2}\bm{n}\times(\bm{v}_h-\widehat{\bm{v}}_h) \|^2_{\partial\mathcal{T}_h}.
	\end{split}
	\end{align}

	\textbf{Step five:}
	
	Take $	C_0=\max(C_1,C_2)+C_3+1$ and $\bm{\tau}_h=C_0\bm{\tau}_h^1+\bm{\tau}_h^2+\bm{\tau}_h^3+\bm{\tau}_h^4$,
	then it follows from \eqref{eq:tauh1}--\eqref{eq:Bh4} that
	\begin{align*}
	\|\bm{\tau}_h\|_{\bm{\Sigma}_h}&\lesssim \|\bm{\sigma}_h\|_{\bm{\Sigma}_h},\\
	B_h(\bm{\sigma}_h,\bm{\tau}_h)&\gtrsim	\|\bm{r}_h\|_{_{\mathcal T_h}}^2+\|(\bm{u}_h,\widehat{\bm{u}}_h,\widehat{\bm c}_h)\|^2_{U}+\|(p_h,\widehat{p}_h)\|^2_{P}=\|\bm{\sigma}_h\|^2_{\bm{\Sigma}_h}.
	\end{align*}
	Hence
	\begin{align*}
	B_h(\bm{\sigma}_h,\bm{\tau}_h)\gtrsim \|\bm{\sigma}_h\|_{\bm{\Sigma}_h}\|\bm{\tau}_h\|_{\bm{\Sigma}_h},
	\end{align*}
	which implies \eqref{lbb1}. Since $B_h$ is symmetric, \eqref{lbb2} also holds.
\end{proof}

The following corollary is a direct consequence of Theorem~\ref{Th45}.
\begin{corollary}
	The HDG scheme \eqref{Bh_HDG} admits a unique solution $\bm{\sigma}_h\in\bm{\Sigma}_h^{\bm{g}}$.
\end{corollary}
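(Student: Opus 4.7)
The plan is to reduce the well-posedness of the square finite-dimensional system \eqref{Bh_HDG} to the discrete inf-sup condition \eqref{lbb1} of Theorem~\ref{Th45}. Since $\bm{\Sigma}_h^{\bm{g}}$ is an affine subspace of $\bm{\Sigma}_h$ with associated linear subspace $\bm{\Sigma}_h^{\bm{0}}$, I would first exhibit a lifting $\bm{\sigma}_h^\star\in\bm{\Sigma}_h^{\bm{g}}$ of the boundary data; such a lifting exists because $\widehat{\bm{U}}_h^{\bm{g}_1}$ and $\widehat{\bm{C}}_h^{\bm{g}_2}$ are nonempty (one picks any facewise polynomial whose tangential trace on $\Gamma$ equals $\bm{\Pi}_{h,k}^{\Gamma,\mathrm{div}}\bm{g}_1$ and $\Pi_{k-1}^{\partial}\bm{g}_2$ respectively, and sets the interior components to zero).

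Writing $\bm{\sigma}_h=\bm{\sigma}_h^\star+\tilde{\bm{\sigma}}_h$ with $\tilde{\bm{\sigma}}_h\in\bm{\Sigma}_h^{\bm{0}}$, the scheme \eqref{Bh_HDG} becomes: find $\tilde{\bm{\sigma}}_h\in\bm{\Sigma}_h^{\bm{0}}$ such that
\begin{align*}
B_h(\tilde{\bm{\sigma}}_h,\bm{\tau}_h)=F_h(\bm{\tau}_h)-B_h(\bm{\sigma}_h^\star,\bm{\tau}_h)\qquad\forall\,\bm{\tau}_h\in\bm{\Sigma}_h^{\bm{0}}.
\end{align*}
Since the trial space and test space now coincide with the single finite-dimensional space $\bm{\Sigma}_h^{\bm{0}}$, this is a square linear system, so existence of a solution is equivalent to uniqueness.

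To obtain uniqueness, I would set the right-hand side to zero and invoke \eqref{lbb1}: any $\tilde{\bm{\sigma}}_h\in\bm{\Sigma}_h^{\bm{0}}$ satisfying $B_h(\tilde{\bm{\sigma}}_h,\bm{\tau}_h)=0$ for every $\bm{\tau}_h\in\bm{\Sigma}_h^{\bm{0}}$ must have $\|\tilde{\bm{\sigma}}_h\|_{\bm{\Sigma}_h}=0$. The two preceding lemmas ensure that $\|\cdot\|_{U}$ and $\|(\cdot,\cdot)\|_{P}$ are genuine norms on $\bm{U}_h\times\widehat{\bm{U}}_h^{\bm{0}}\times\widehat{\bm{C}}_h^{\bm{0}}$ and on $P_h\times\widehat{P}_h^{0}$ respectively, and $\|\cdot\|_{\mathcal{T}_h}$ is trivially a norm on $\bm{R}_h$, so $\|\cdot\|_{\bm{\Sigma}_h}$ is a norm on $\bm{\Sigma}_h^{\bm{0}}$. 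Therefore $\tilde{\bm{\sigma}}_h=\bm{0}$, which gives uniqueness and hence existence of $\bm{\sigma}_h\in\bm{\Sigma}_h^{\bm{g}}$.

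There is no genuinely hard step here: all the analytical work has already been absorbed into Theorem~\ref{Th45}, and the corollary is essentially a bookkeeping exercise. The only technicality is verifying that the affine boundary-value subspaces are nonempty, which is immediate from the finite element definitions via the projections $\bm{\Pi}_{h,k}^{\Gamma,\mathrm{div}}$ and $\Pi_{k-1}^{\partial}$ on boundary faces.
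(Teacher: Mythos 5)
Your argument is correct and is exactly the standard reasoning the paper leaves implicit when it states the corollary as a ``direct consequence'' of Theorem~\ref{Th45}: lift the boundary data to reduce \eqref{Bh_HDG} to a square homogeneous system on $\bm{\Sigma}_h^{\bm{0}}$, then combine \eqref{lbb1} with the fact (from the two preceding lemmas) that $\|\cdot\|_{\bm{\Sigma}_h}$ is a genuine norm on $\bm{\Sigma}_h^{\bm{0}}$ to obtain uniqueness, hence existence. Nothing is missing.
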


\section{Error estimates}

\subsection{Primary estimates}
\begin{lemma} \label{Jh}Let $(\bm{r},\bm{u},p)$ be the solution of \eqref{mix0source}, $\bm{\sigma}$ and $\bm{\mathcal{J}}_h\bm{\sigma}$ be defined by
	\begin{align*}
	\bm{\sigma}&:=(\bm{r},\bm{u},{\bm{u}},\nabla\times\bm u,p,p),\\
	\bm{\mathcal{J}}_h\bm{\sigma}&:=(\bm{\Pi}_{k-1}^{o}\bm{r},\bm{\Pi}^{{\rm curl}}_{h,k}\bm{u},\bm n\times\bm{\Pi}^{{\rm curl}}_{h,k}\bm{u}\times\bm n,\bm{\Pi}^{\partial}_{k-1}\nabla\times\bm{u},\Pi^o_{k}p,\Pi^{\partial}_kp).
	\end{align*}
	Then we have
	\begin{align}
	B_h(\bm{\mathcal{J}}_h\bm{\sigma},\bm{\tau}_h)=F_h(\bm{\tau}_h)+E^{\mathcal{J}}_h(\bm{\sigma};\bm{\tau}_h)\quad\forall\, \bm{\tau}_h\in\bm{\Sigma}^{\bm{0}}_h, \label{510}
	\end{align}
	where
	\begin{align}\label{eq:EJh}
	\begin{split}
	E^{\mathcal{J}}_h(\bm{\sigma};\bm{\tau}_h)&= -\langle \bm{n}\times(\widehat{\bm{v}}_h-\bm{v}_h),\nabla \times(\bm{\Pi}_{k-1}^{o}\bm{r}-\bm{r}) \rangle_{\partial\mathcal{T}_h} \\
	&\quad\,\,-\langle \bm{n}\times(\widehat{\bm d}_h-\nabla \times\bm{v}_h),\bm{\Pi}_{k-1}^{o}\bm{r}-\bm{r} \rangle_{\partial\mathcal{T}_h} \\
	&\quad\,\,\,\,-\langle h_F^{-1} \bm{n}\times(  \nabla\times\bm{\Pi}^{{\rm curl}}_{h,k}\bm{u}-\nabla\times\bm{u}        ), \bm{n}\times(\nabla \times\bm{v}_h-\widehat{\bm d}_h )  \rangle_{\partial\mathcal{T}_h}\\
	&\quad\,\,\,\,\,\,-\langle \widehat{q}_h-q_h,(\bm{\Pi}^{{\rm curl}}_{h,k}\bm{u}-\bm u)\cdot\bm n \rangle_{\partial\mathcal{T}_h}\\
	&\qquad\,\,+\langle h_F^{-1}( \Pi_{k}^op-p),q_h-\widehat{q}_h \rangle_{\partial\mathcal{T}_h}-(\nabla  q_h,\bm{\Pi}^{{\rm curl}}_{h,k}\bm{u}-\bm u).
	\end{split}
	\end{align}
\end{lemma}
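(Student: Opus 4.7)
The plan is to establish \eqref{510} by writing
\[
B_h(\bm{\mathcal{J}}_h\bm{\sigma},\bm{\tau}_h) \;=\; B_h(\bm{\sigma},\bm{\tau}_h) \;+\; B_h(\bm{\mathcal{J}}_h\bm{\sigma}-\bm{\sigma},\bm{\tau}_h),
\]
establishing Galerkin consistency of the exact solution so that the first summand reduces to $F_h(\bm{\tau}_h)$, and then identifying the second summand as $E^{\mathcal{J}}_h(\bm{\sigma};\bm{\tau}_h)$ via the defining orthogonality relations of the projections.

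For the consistency half I would substitute $\bm{\sigma}=(\bm{r},\bm{u},\bm{u},\nabla\times\bm{u},p,p)$ directly into each piece of $B_h$. In $a_h+b_h(\cdot;\bm{s}_h)$, elementwise integration by parts transfers both curls from $\bm{s}_h$ onto $\bm{u}$; since the volume slot and the two face slots all carry the same trace of $\bm{u}$ and $\nabla\times\bm{u}$, every surface contribution cancels and one is left with $(\bm{r}-\nabla\times\nabla\times\bm{u},\bm{s}_h)=0$. The mirror computation on $b_h(\cdot;\bm{r})+c_h(p,p;\bm{v}_h)$ moves the curl-curl onto $\bm{r}$ and the gradient onto $p$, producing $(\nabla\times\nabla\times\bm{r}+\nabla p,\bm{v}_h)=(\bm{f},\bm{v}_h)$; the inter-element face terms cancel thanks to the $H({\rm curl})$ regularity of $\bm{r}$ and the normal continuity of $\bm{u}$, and the boundary terms vanish using the essential boundary conditions together with $\widehat{\bm{v}}_h\in\widehat{\bm{U}}_h^{\bm{0}}$, $\widehat{\bm{d}}_h\in\widehat{\bm{C}}_h^{\bm{0}}$, $\widehat{q}_h|_\Gamma=0$. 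Finally $c_h(q_h,\widehat{q}_h;\bm{u})=(g,q_h)$ since $\nabla\cdot\bm{u}=g$, and both $s_h^u$ and $s_h^p$ vanish on $\bm{\sigma}$ because their volume and face arguments agree.

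For the residual half I would exploit the projection orthogonalities systematically. The term $a_h(\Pi^o_{k-1}\bm{r}-\bm{r},\bm{s}_h)$ drops out since $\bm{s}_h\in[\mathcal{P}_{k-1}(\mathcal{T}_h)]^3$. Double integration by parts in $b_h(\bm{\Pi}^{\rm curl}_{h,k}\bm{u}-\bm{u},\cdot,\cdot;\bm{s}_h)$ exposes $(\bm{\Pi}^{\rm curl}_{h,k}\bm{u}-\bm{u},\nabla\times\nabla\times\bm{s}_h)_{\mathcal{T}_h}$, which vanishes by \eqref{or-curl-T} because $\nabla\times\nabla\times\bm{s}_h$ lies in the local $\bm{\mathcal{D}}_{k-2}(T)$-type space; the generated surface integrals regroup against the $\widehat{\bm{v}}_h$ and $\widehat{\bm{d}}_h$ slots, and \eqref{or-curl-F} kills the part of $\bm{\Pi}^{\rm curl}_{h,k}\bm{u}-\bm{u}$ paired with $\bm{\mathcal{D}}_{k-1}(F)$-polynomials, leaving precisely the first three boundary residuals of \eqref{eq:EJh}. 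The $c_h$-residual, handled by \eqref{or-div-F} on the normal component, yields the fourth term together with the volume piece $-(\nabla q_h,\bm{\Pi}^{\rm curl}_{h,k}\bm{u}-\bm{u})$. For the pressure stabilization I would split $\Pi^o_kp-\Pi^\partial_kp=(\Pi^o_kp-p)+(p-\Pi^\partial_kp)$, observe that $q_h-\widehat{q}_h|_F\in\mathcal{P}_k(F)$, and use $\Pi^\partial_k$-orthogonality to discard the second piece, producing the fifth residual term.

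The main obstacle, as I see it, is the careful bookkeeping of face integrals produced by the two successive integrations by parts of the curl-curl operator: each stage spawns surface contributions involving $\bm{s}_h$ and $\nabla\times\bm{s}_h$, and these must be regrouped against the $\bm{n}\times\widehat{\bm{v}}_h$ and $\bm{n}\times\widehat{\bm{d}}_h$ slots of $b_h$ so that the cross-differences $\bm{n}\times(\widehat{\bm{v}}_h-\bm{v}_h)$ and $\bm{n}\times(\widehat{\bm{d}}_h-\nabla\times\bm{v}_h)$ seen in $E^{\mathcal{J}}_h$ emerge cleanly, while inter-element jumps of $\bm{r}$ and $\nabla\times\bm{r}$ disappear by the assumed $H^s({\rm curl})$ regularity. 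Once this rearrangement is done, the projection identities collapse the remainder to exactly the six residual terms listed in \eqref{eq:EJh}.
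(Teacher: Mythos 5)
Your proposal is correct, but it organizes the computation differently from the paper. You split $B_h(\bm{\mathcal{J}}_h\bm{\sigma},\bm{\tau}_h)=B_h(\bm{\sigma},\bm{\tau}_h)+B_h(\bm{\mathcal{J}}_h\bm{\sigma}-\bm{\sigma},\bm{\tau}_h)$, prove consistency of the exact solution for the first piece, and identify the second piece with $E^{\mathcal{J}}_h$ via projection orthogonality. The paper never isolates a consistency statement: it evaluates $B_h(\bm{\mathcal{J}}_h\bm{\sigma},\bm{\tau}_h)$ block by block in a single pass, interleaving the projection identities (e.g.\ $(\nabla\times(\bm u-\bm{\Pi}^{\rm curl}_{h,k}\bm u),\nabla\times\bm s_h)=0$, the $L^2$- and trace-orthogonalities of $\Pi^o_{k-1}$, $\Pi^{\partial}_{k-1}$, $\Pi^{\partial}_k$) with integration by parts and the substitutions $\bm r=\nabla\times\nabla\times\bm u$, $\nabla\times\nabla\times\bm r+\nabla p=\bm f$, $\nabla\cdot\bm u=g$. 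The two routes perform the same underlying manipulations and land on the same six residual terms; yours buys a cleaner conceptual separation (consistency versus approximation) at the mild cost of needing $B_h(\bm\sigma,\cdot)$ to be well defined on the exact solution, which the regularity assumption \eqref{regular} with $s>1/2$ does guarantee. One bookkeeping slip in your narrative: the first three residuals of \eqref{eq:EJh} do not come from the block $b_h(\bm{\Pi}^{\rm curl}_{h,k}\bm u-\bm u,\cdot,\cdot;\bm s_h)$ — that block vanishes entirely (this is the paper's identity \eqref{eq:ahest}, and is why no term of $E^{\mathcal{J}}_h$ is tested against $\bm s_h$). The first two residuals arise from $b_h(\bm v_h,\widehat{\bm v}_h,\widehat{\bm d}_h;\bm{\Pi}^o_{k-1}\bm r-\bm r)$ after integrating by parts and using $(\nabla\times\nabla\times\bm v_h,\bm{\Pi}^o_{k-1}\bm r-\bm r)_{\mathcal T_h}=0$, and the third from the stabilization $s^u_h$ together with the $\Pi^{\partial}_{k-1}$-orthogonality that replaces $\Pi^{\partial}_{k-1}\nabla\times\bm u$ by $\nabla\times\bm u$. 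This misattribution does not break the argument, since all terms are accounted for in aggregate, but it should be corrected in a written-out version.
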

\begin{proof} By the definitions of $a_h$, $b_h$, integration by parts and the fact that $m\le k-1$, we arrive at
	\begin{align*}
	&a_h(\bm{\Pi}_{k-1}^{o}\bm{r},\bm{s}_h)+b_h(\bm{\Pi}^{{\rm curl}}_{h,k}\bm{u},\bm n\times\bm{\Pi}^{{\rm curl}}_{h,k}\bm{u}\times\bm n;\bm{s}_h)\nonumber\\
	&\quad\quad= (\bm{\Pi}_{k-1}^{o}\bm{r},\bm{s}_h)-(\bm{\Pi}^{{\rm curl}}_{h,k}\bm{u},\nabla \times\nabla \times\bm{s}_h)\nonumber\\
	&\qquad\quad\quad-\langle \bm{n}\times\bm n\times\bm{\Pi}^{{\rm curl}}_{h,k}\bm{u}\times\bm n,\nabla \times\bm{s}_h \rangle_{\partial\mathcal{T}_h}
	-\langle \bm{n}\times\bm{\Pi}^{\partial}_{k-1}\nabla\times\bm{u},\bm{s}_h \rangle_{\partial\mathcal{T}_h}  \nonumber\\
	&\quad\quad= (\bm{r},\bm{s}_h)-(\nabla\times\bm{\Pi}^{{\rm curl}}_{h,k}\bm{u},\nabla \times\bm{s}_h)
	-\langle \bm{n}\times\nabla\times\bm{u},\bm{s}_h \rangle_{\partial\mathcal{T}_h}\nonumber\\
	&\quad\quad= (\bm{r},\bm{s}_h)+(\nabla\times(\bm u-\bm{\Pi}^{{\rm curl}}_{h,k}\bm{u}),\nabla \times\bm{s}_h)\nonumber\\
	&\qquad\quad\quad-(\nabla\times\bm u,\nabla \times\bm{s}_h)-\langle \bm{n}\times\nabla\times\bm{u},\bm{s}_h \rangle_{\partial\mathcal{T}_h}.
	\end{align*}
	Using integration by parts and the fact that $\bm r=\nabla\times\nabla\times\bm u$, and $(\nabla\times(\bm u-\bm{\Pi}^{{\rm curl}}_{h,k}\bm{u}),\nabla \times\bm{s}_h)=0$, we get
	\begin{align}\label{eq:ahest}
	a_h(\bm{\Pi}_{k-1}^{o}\bm{r},\bm{s}_h)+b_h(\bm{\Pi}^{{\rm curl}}_{h,k}\bm{u},\bm n\times\bm{\Pi}^{{\rm curl}}_{h,k}\bm{u}\times\bm n;\bm{s}_h)= 0.
	\end{align}
	\vspace{0.02in}
	\par
	By the definitions of $b_h$, $c_h$, $s_h^u$ and integration by parts, one can get
	\begin{align*}
	&b_h(\bm{v}_h,\widehat{\bm{v}}_h;\bm{\Pi}_{k-1}^{o}\bm{r})+c_h(\Pi^o_{k}p,\Pi^{\partial}_kp;\bm{v}_h)
	-s^u_h(  \bm{\Pi}^{{\rm curl}}_{h,k}\bm{u},\bm n\times\bm{\Pi}^{{\rm curl}}_{h,k}\bm{u}\times\bm n; \bm{v}_h,\widehat{\bm{v}}_h )_{\mathcal T_h} \nonumber\\
	&\quad\quad=-(\bm{v}_h,\nabla \times\nabla \times\bm{\Pi}_{k-1}^{o}\bm{r})_{\mathcal T_h}-\langle \bm{n}\times\widehat{\bm{v}}_h,\nabla \times\bm{\Pi}_{k-1}^{o}\bm{r} \rangle_{\partial\mathcal{T}_h}  \nonumber\\
	&\,\,\,\,\quad\quad\quad\quad-\langle \bm{n}\times\widehat{\bm d}_h,\bm{\Pi}_{k-1}^{o}\bm{r} \rangle_{\partial\mathcal{T}_h}+(\nabla \cdot\bm{v}_h, \Pi^o_{k}p)_{\mathcal T_h}-\langle \bm{n}\cdot\bm{v}_h,\Pi^{\partial}_kp \rangle_{\partial\mathcal{T}_h} \nonumber\\
	&\,\,\quad\qquad\quad\quad-\langle h_F^{-1} \bm{n}\times(\nabla\times\bm{\Pi}^{{\rm curl}}_{h,k}\bm{u}-\Pi^{\partial}_{s}\nabla\times\bm{u}), \bm{n}\times(\nabla \times\bm{v}_h-\widehat{\bm d}_h )  \rangle_{\partial\mathcal{T}_h}\nonumber\\
	&\quad\quad=-(\nabla \times\nabla \times\bm{v}_h,\bm{\Pi}_{k-1}^{o}\bm{r})_{\mathcal T_h}-(\bm{v}_h,\nabla p)_{\mathcal T_h} \nonumber\\
	&\quad\quad\quad\quad-\langle \bm{n}\times(\widehat{\bm{v}}_h-\bm v_h),\nabla \times\bm{\Pi}_{k-1}^{o}\bm{r} \rangle_{\partial\mathcal{T}_h} -\langle \bm{n}\times(\widehat{\bm d}_h-\nabla \times\bm{v}_h),\bm{\Pi}_{k-1}^{o}\bm{r} \rangle_{\partial\mathcal{T}_h} \nonumber\\
	&\,\,\quad\qquad\quad\quad-\langle h_F^{-1} \bm{n}\times(  \nabla\times\bm{\Pi}^{{\rm curl}}_{h,k}\bm{u}-\nabla\times\bm{u}), \bm{n}\times(\nabla \times\bm{v}_h-\widehat{\bm d}_h ) \rangle_{\partial\mathcal{T}_h}.
	\end{align*}
	Since $\langle \bm{n}\times\widehat{\bm{v}}_h,\nabla \times\bm{r} \rangle_{\partial\mathcal{T}_h}=0$ and
	$\langle \bm{n}\times\widehat{\bm d}_h,\bm{r} \rangle_{\partial\mathcal{T}_h}=0$, it then follows from \eqref{eq:bcsh} and integration by parts that
	\begin{align}
	\begin{split}
	&b_h(\bm{v}_h,\widehat{\bm{v}}_h;\bm{\Pi}_{k-1}^{o}\bm{r})+c_h(\Pi^o_{k}p,\Pi^{\partial}_kp;\bm{v}_h)
	-s^u_h(  \bm{\Pi}^{{\rm curl}}_{h,k}\bm{u},\bm n\times\bm{\Pi}^{{\rm curl}}_{h,k}\bm{u}\times\bm n; \bm{v}_h,\widehat{\bm{v}}_h ) \\
	&\quad\quad=-(\bm{v}_h,\nabla\times\nabla\times\bm r+\nabla p)_{\mathcal T_h} -\langle \bm{n}\times(\widehat{\bm{v}}_h-\bm{v}_h),\nabla \times(\bm{\Pi}_{k-1}^{o}\bm{r}-\bm{r}) \rangle_{\partial\mathcal{T}_h} \\
	& \quad\quad\quad\quad-\langle \bm{n}\times(\widehat{\bm d}_h-\nabla \times\bm{v}_h),\bm{\Pi}_{k-1}^{o}\bm{r}-\bm{r} \rangle_{\partial\mathcal{T}_h} \\
	&\qquad\quad\quad\quad-\langle h_F^{-1} \bm{n}\times(  \nabla\times\bm{\Pi}^{{\rm curl}}_{h,k}\bm{u}-\nabla\times\bm{u}        ), \bm{n}\times(\nabla \times\bm{v}_h-\widehat{\bm d}_h )  \rangle_{\partial\mathcal{T}_h}\\
	&\quad\quad=-(\bm{v}_h,\bm f) -\langle \bm{n}\times(\widehat{\bm{v}}_h-\bm{v}_h),\nabla \times(\bm{\Pi}_{k-1}^{o}\bm{r}-\bm{r}) \rangle_{\partial\mathcal{T}_h} \\
	&\quad\quad\quad\quad-\langle \bm{n}\times(\widehat{\bm d}_h-\nabla \times\bm{v}_h),\bm{\Pi}_{k-1}^{o}\bm{r}-\bm{r} \rangle_{\partial\mathcal{T}_h} \\
	&\qquad\quad\quad\quad-\langle h_F^{-1} \bm{n}\times(  \nabla\times\bm{\Pi}^{{\rm curl}}_{h,k}\bm{u}-\nabla\times\bm{u}        ), \bm{n}\times(\nabla \times\bm{v}_h-\widehat{\bm d}_h )  \rangle_{\partial\mathcal{T}_h}.
	\end{split}
	\end{align}
	\vspace{0.01in}
	By the definition of $c_h$ and integration by parts we have
	\begin{align}
	c_h(q_h,\widehat{q}_h;\bm{\Pi}^{{\rm curl}}_{h,k}\bm{u}) &= (q_h,\nabla \cdot\bm{\Pi}^{{\rm curl}}_{h,k}\bm{u})-\langle\widehat{q}_h,\bm{\Pi}^{{\rm curl}}_{h,k}\bm{u}\cdot\bm n \rangle_{\partial\mathcal{T}_h}\nonumber\\
	&=-(\nabla  q_h,\bm{\Pi}^{{\rm curl}}_{h,k}\bm{u})-\langle \widehat{q}_h-q_h,\bm{\Pi}^{{\rm curl}}_{h,k}\bm{u}\cdot\bm n \rangle_{\partial\mathcal{T}_h} \nonumber\\
	&=-\langle \widehat{q}_h-q_h,(\bm{\Pi}^{{\rm curl}}_{h,k}\bm{u}-\bm u)\cdot\bm n \rangle_{\partial\mathcal{T}_h}- (\nabla  q_h,\bm{\Pi}^{{\rm curl}}_{h,k}\bm{u}-\bm u)_{\mathcal T_h}+(g,q_h)_{\mathcal T_h},
	\end{align}
	\vspace{0.01in}
	where we have used the fact $\langle
	\widehat q_h,\bm u\cdot\bm n \rangle_{\partial\mathcal{T}_h}=0$ and $\nabla\cdot\bm u=g$. By the definition of $s^p_h$ we get
	\begin{align}\label{eq:shest}
	s_h^p(\Pi_{k}^op,\Pi_k^{\partial}p;q_h,\widehat{q}_h)=\langle h_F^{-1}( \Pi_{k}^op-p),q_h-\widehat{q}_h \rangle_{\partial\mathcal{T}_h}.
	\end{align}

	Finally the desired result \eqref{510} follows from the definition \eqref{B_h} and \eqref{eq:ahest}--\eqref{eq:shest}.
	
\end{proof}

%
We recall the result in \cite{MR3508837}. For any $(\bm v,\widehat{\bm v})\in [L^2(\Omega)]^3\times[L^2(\partial\mathcal T_h)]^3$, and for any $T\in\mathcal T_h$, there exists an interpolation $\bm{\mathcal I}_T(\bm v,\widehat{\bm v})\in [\mathcal P_{k+3}(T)]^3$ such that
\begin{subequations}
	\begin{align}
	(\bm{\mathcal I}_T(\bm v,\widehat{\bm{v}}),\bm w_h)_T&=(\bm v,\bm w_h)_T,\label{i1}\\
	\langle \bm{\mathcal I}_T(\bm v,\widehat{\bm{v}}), \widehat{\bm{w}}_h \rangle_{F}&=\langle  \widehat{\bm{v}},\widehat{\bm{w}}_h \rangle_{F},\label{i2}
	\end{align}
\end{subequations}
for all  $(\bm{w}_h,\widehat{\bm{w}}_h)\in
[\mathcal P_{k}(T)\color{black} ]^3  \color{black}\times [\mathcal P_k(F)\color{black} ]^3  \color{black}$, and $F\color{black}\subset\color{black}\partial T$. We define $\bm{\mathcal I}_h|_T=\bm{\mathcal I}_T$, if $\bm v_h|_T\in [\mathcal P_k(T)]^3$, 
$\widehat{\bm v}_h\in[\mathcal P_k(F)]^3$ for all $F\subset \partial T$,
it holds
\begin{subequations}
	\begin{align}
	\|\bm v_h- \bm{\mathcal I}_h(\bm v_h,\widehat{\bm{v}}_h)\|_{\mathcal T_h}  &\lesssim   \| h_T^{1/2}(\bm v_h-\widehat{\bm v}_h)\|_{\partial \mathcal T_h},\label{i3}\\
	\|\nabla(\bm v_h-\bm{\mathcal I}_h(\bm v_h,\widehat{\bm{v}}_h))\|_{\mathcal T_h} &\lesssim \| h_T^{-1/2}(\bm v_h-\widehat{\bm v}_h)\|_{\partial \mathcal T_h} . 	\label{i4}
	\end{align}
\end{subequations}
In addition, if $\widehat{\bm v}_h\in [\mathcal P_k(\mathcal F_h)]^3$ and $\widehat{\bm v}_h|_{\partial\Omega}=\bm 0$, then $\bm{\mathcal I}_h(\bm v_h,\widehat{\bm v}_h)\in [H^1_0(\Omega)]^3$.

We define 
\begin{align}
\bm{\Pi}_T(\bm v,\widehat{\bm v}):=\bm{\mathcal I}_T(\bm v,\widehat{\bm 
	v}+(\bm n\cdot\bm v)\bm n).\label{def_pi}
\end{align}

\begin{lemma} \label{interpolation_projection}
	For any $T\in\mathcal{T}_h$
	and
	$(\bm{v},\widehat{\bm{v}})\in H^1(T)\times L^2(\partial T)$, we have
	\begin{subequations}\label{eq:interPi}
		\begin{align}
		(\bm{\Pi}_T(\bm v,\widehat{\bm{v}}),\bm w_h)_T&=(\bm v,\bm w_h)_T,\label{ia}\\
		\langle \bm n\times\bm{\Pi}_T(\bm v,\widehat{\bm{v}}),\bm n\times \widehat{\bm{w}}_h \rangle_{F}&=\langle \bm n\times \widehat{\bm{v}}, \bm n\times\widehat{\bm{w}}_h \rangle_{F},\label{ib}
		\end{align}
	\end{subequations}
	for all  $(\bm{w}_h,\widehat{\bm{w}}_h)\in
	[\mathcal P_{k}(T)\color{black} ]^3  \color{black}\times [\mathcal P_k(F)\color{black} ]^3  \color{black}$, and $F\color{black}\subset\color{black}\partial T$. And the following approximation properties hold true for 
	$(\bm v_h,\widehat {\bm{v}}_h)\in \bm V_h\times \widehat {\bm V}_h$
	\begin{align}
	\|\bm v_h- \bm{\Pi}_h(\bm v_h,\widehat{\bm{v}}_h)\|_{\mathcal T_h}  &\lesssim   \| h_T^{1/2}\bm n\times(\bm v_h-\widehat{\bm v}_h)\|_{\partial \mathcal T_h} \label{2_error},\\
	\|\nabla\times(\bm v_h-\bm{\Pi}_h(\bm v_h,\widehat{\bm{v}}_h))\|_{\mathcal T_h} &\lesssim \| h_T^{-1/2}\bm n\times(\bm v_h-\widehat{\bm v}_h)\|_{\partial \mathcal T_h}. \label{15_error}
	\end{align}
	Moreover, we define $\bm{\Pi}_h|_T=\bm{\Pi}_T$,
	then 
	$\bm{\Pi}_h(\bm v_h, \bm v_h)\in \bm H_0({\rm curl};\Omega)$ for all $(\bm v_h,\widehat {\bm v}_h)\in \bm V_h\times\widehat {\bm V}_h$.
	\color{black}
\end{lemma}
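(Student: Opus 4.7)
The plan is to reduce each claim to the corresponding property of the auxiliary operator $\bm{\mathcal I}_T$ in \eqref{i1}--\eqref{i4}, exploiting that the modification $\widehat{\bm v} \mapsto \widehat{\bm v} + (\bm n\cdot\bm v)\bm n$ is purely normal on $F$ and is therefore annihilated by $\bm n\times(\cdot)$.

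Property \eqref{ia} is immediate from \eqref{i1}, since the volume identity in \eqref{i1} has no dependence on the second argument. For \eqref{ib}, the key algebraic identity is the pointwise relation $(\bm n\times\bm a)\cdot(\bm n\times\bm b) = \bm a\cdot\bm b - (\bm n\cdot\bm a)(\bm n\cdot\bm b)$ on $F$, which rewrites both sides of \eqref{ib} as differences of ordinary $L^2(F)$ pairings. I would then apply \eqref{i2} twice: once with test function $\widehat{\bm w}_h$, and once with test function $(\bm n\cdot\widehat{\bm w}_h)\bm n$, which still lies in $[\mathcal P_k(F)]^3$ because $\bm n$ is constant on the planar face $F$. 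Subtracting the two, the contributions arising from the added normal piece $(\bm n\cdot\bm v)\bm n$ cancel exactly, leaving the claimed identity.

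For the approximation estimates \eqref{2_error}--\eqref{15_error}, I would apply \eqref{i3}--\eqref{i4} with the second argument taken as $\widehat{\bm v}_h + (\bm n\cdot\bm v_h)\bm n$, which produces upper bounds involving the integrand $\bm v_h - \widehat{\bm v}_h - (\bm n\cdot\bm v_h)\bm n$ on $\partial\mathcal T_h$. The crucial step is to invoke the defining property $\bm n\cdot\widehat{\bm v}_h = 0$ of $\widehat{\bm V}_h$ inherited from the HDG spaces $\widehat{\bm U}_h$ and $\widehat{\bm C}_h$, which allows me to rewrite $(\bm n\cdot\bm v_h)\bm n = \bm n\cdot(\bm v_h-\widehat{\bm v}_h)\bm n$. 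The integrand then becomes exactly the tangential component of $\bm v_h-\widehat{\bm v}_h$, whose pointwise Euclidean norm equals $|\bm n\times(\bm v_h-\widehat{\bm v}_h)|$, and the bounds in terms of $\|h_T^{\pm 1/2}\bm n\times(\bm v_h-\widehat{\bm v}_h)\|_{\partial\mathcal T_h}$ follow at once.

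Finally, for the conforming statement $\bm{\Pi}_h(\bm v_h,\bm v_h)\in\bm H_0(\mathrm{curl};\Omega)$, the modified face data $\bm v_h + (\bm n\cdot\bm v_h)\bm n$ has the convenient feature that its image under $\bm n\times(\cdot)$ coincides with $\bm n\times\bm v_h$, which is the object that governs tangential conformity; on $\partial\Omega$ this tangential part vanishes when $\bm v_h\in\bm V_h$ carries the zero tangential boundary condition. I would conclude by appealing to the construction of $\bm{\mathcal I}_h$ from \cite{MR3508837} in the same way that the $[H^1_0(\Omega)]^3$-conformity at the end of \eqref{i4} is obtained there. The hard part is precisely this last step: because $\bm{\mathcal I}_T$ lands in $[\mathcal P_{k+3}(T)]^3$, the two moment conditions \eqref{i1}--\eqref{i2} do not by themselves pin down the full tangential trace on $F$, so $\bm H_0(\mathrm{curl})$-conformity cannot be obtained from the abstract characterizations alone and must invoke the explicit dof layout of the interpolant in \cite{MR3508837}.
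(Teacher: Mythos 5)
Your proposal is correct and follows essentially the same route as the paper: reduce \eqref{ia}--\eqref{ib} to the moment conditions \eqref{i1}--\eqref{i2} by noting that the added piece $(\bm n\cdot\bm v)\bm n$ is purely normal (your two applications of \eqref{i2} with $\widehat{\bm w}_h$ and $(\bm n\cdot\widehat{\bm w}_h)\bm n$ amount to the paper's single application with the tangential test function $\bm n\times\widehat{\bm w}_h\times\bm n$), obtain \eqref{2_error}--\eqref{15_error} from \eqref{i3}--\eqref{i4} via $\bm n\cdot\widehat{\bm v}_h=0$ together with $\|\nabla\times(\cdot)\|_{\mathcal T_h}\lesssim\|\nabla(\cdot)\|_{\mathcal T_h}$, and deduce the $\bm H_0({\rm curl};\Omega)$-conformity from the single-valuedness and boundary vanishing of $\bm n\times\widehat{\bm v}_h$ via the explicit construction in \cite{MR3508837}. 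Your closing caveat is also consistent with the paper, which likewise settles the conformity by asserting the commutation $\bm n_F\times\bm{\Pi}_T(\bm v_h,\widehat{\bm v}_h)=\bm{\mathcal I}_T(\bm n_F\times\bm v_h,\bm n_F\times\widehat{\bm v}_h)$ from the construction of $\bm{\mathcal I}_T$ rather than from \eqref{i1}--\eqref{i2} alone.
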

\begin{proof}
	By \eqref{i1} and \eqref{def_pi} we have
	\begin{align*}
	(\bm{\Pi}_T(\bm v,\widehat{\bm v}),\bm w_h)_T=(\bm{\mathcal I}_T(\bm v,\widehat{\bm 
		v}+(\bm n\cdot\bm v)\bm n),\bm w_h)_T=(\bm v,\bm w_h)_T.
	\end{align*}
	By \eqref{i2} and \eqref{def_pi}, it holds
	\begin{align*}
	\langle \bm n\times\bm{\Pi}_T(\bm v,\widehat{\bm{v}}),\bm n\times \widehat{\bm{w}}_h \rangle_{F}&=	\langle \bm n\times\bm{\mathcal I}_T(\bm v,\widehat{\bm{v}}),\bm n\times \widehat{\bm{w}}_h \rangle_{F} \nonumber\\
	&=\langle \bm{\mathcal I}_T(\bm v,\widehat{\bm{v}}),\bm n\times \widehat{\bm{w}}_h\times\bm n \rangle_{F} \nonumber\\
	&=\langle \widehat{\bm{v}},\bm n\times \widehat{\bm{w}}_h\times\bm n \rangle_{F} \nonumber\\
	&=\langle\bm n\times \widehat{\bm{v}},\bm n\times \widehat{\bm{w}}_h \rangle_{F}.
	\end{align*}
	We use \eqref{i3}, \eqref{def_pi} and the fact $\bm n\cdot\widehat{\bm v}_h=\bm 0$ to get
	\begin{align*}
	\|\bm v_h- \bm{\Pi}_h(\bm v_h,\widehat{\bm{v}}_h)\|_{\mathcal T_h}  &\lesssim   \| h_T^{1/2}(\bm v_h-(\widehat{\bm v}_h+(\bm n\cdot\bm v_h)\bm n))\|_{\partial \mathcal T_h}\nonumber\\
	&= \| h_T^{1/2}(\bm v_h-(\bm n\cdot\bm v_h)\bm n)-(\widehat{\bm v}_h-(\bm n\cdot\widehat{\bm v}_h)\bm n)\|_{\partial \mathcal T_h}\nonumber\\
	&=\| h_T^{1/2}\bm n\times(\bm v_h-\widehat{\bm  v}_h)\times\bm n\|_{\partial \mathcal T_h}\nonumber\\
	&\le\| h_T^{1/2}\bm n\times(\bm v_h-\widehat{\bm  v}_h)\|_{\partial \mathcal T_h}.
	\end{align*}
	\eqref{15_error} is followed by the proof similar to the above one, \eqref{i4}, and the fact $\|\nabla\times(\bm v_h-\bm{\Pi}_h(\bm v_h,\widehat{\bm v}_h))\|_{\mathcal T_h}\lesssim \|\nabla(\bm v_h-\bm{\Pi}_h(\bm v_h,\widehat{\bm v}_h))\|_{\mathcal T_h}$. We use \eqref{def_pi} to get
	$\bm n_F\times\bm{\Pi}_T(\bm v_h,\widehat{\bm v}_h)=\bm{\mathcal I}_T(\bm n_F\times\bm v,\bm n_F\times\widehat{\bm v})$ on every face $F\subset \partial T$. Since $\bm n\times\widehat{\bm v}\in [\mathcal P_k(\mathcal F_h)]^3$ and $\bm n\times\widehat{\bm v}|_{\partial\Omega}=\bm 0$, then $\bm{\mathcal I}_T(\bm n_F\times\bm v_h,\bm n_F\times\widehat{\bm v}_h)$ is continuous on $F$ and $\bm{\mathcal I}_T(\bm n_F\times\bm v_h,\bm n_F\times\widehat{\bm v}_h)|_{\partial\Omega}=\bm 0$, so $\bm{\Pi}_h(\bm v_h,\widehat{\bm v}_h)\in \bm H_0({\rm curl};\Omega)$.
\end{proof}
\color{black}

\begin{lemma} \label{lemma53}
	We have the following error estimates
	\begin{align*}
	E^{\mathcal{J}}_h(\bm{\sigma};\bm{\tau}_h)&\lesssim h^2\|\nabla\times\nabla\times \bm r\|_{_{\mathcal T_h}}\|h_F^{-3/2}\bm n\times(\bm v_h-\widehat{\bm v}_h)\|_{\partial\mathcal{T}_h}\\
	&\quad+h^s\|\bm r\|_{s}\left(\|h_F^{-1/2}\bm{n}\times(\widehat{\bm d}_h-\nabla \times\bm{v}_h)\|_{0,\partial \mathcal{T}_h}+\|h_F^{-3/2}\bm n\times(\bm v_h-\widehat{\bm v}_h)\|_{\partial\mathcal{T}_h}\right)\\
	&\,\,\,\,\quad+h^{s}\|\bm u\|_{s}\left(\|\nabla  q_h\|_{_{\mathcal T_h}}+\|h_F^{-1/2}(q_h-\widehat{q}_h)\|_{\partial\mathcal{T}_h}\right) \\
	&\qquad\,\,+h^{s}\|p\|_{s+1}\|h_F^{-1/2}(q_h-\widehat{q}_h)\|_{\partial\mathcal{T}_h}\\
	&\qquad\quad+h^{\min(k-1,s)}\|\nabla\times\bm{u}\|_{s+1} \|h_F^{-1/2}\bm{n}\times(\widehat{\bm d}_h-\nabla \times\bm{v}_h)\|_{0,\partial \mathcal{T}_h}.
	\end{align*}
\end{lemma}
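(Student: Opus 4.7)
The plan is to bound the six boundary/volume pairings appearing in the expression \eqref{eq:EJh} for $E^{\mathcal J}_h(\bm\sigma;\bm\tau_h)$ separately by a Cauchy-Schwarz inequality with a carefully chosen split of the $h_F$ and $h_T$ weights, so that one factor is absorbed into a component of $\|\bm\tau_h\|_{\bm\Sigma_h}$ while the other becomes an interpolation error against the regularity \eqref{regular}.

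Five of the six terms are routine. For the integral paired with $\nabla\times\bm\Pi^{\rm curl}_{h,k}\bm u - \nabla\times\bm u$, one uses the commuting identity $\nabla\times\bm\Pi^{\rm curl}_{h,k}\bm u = \bm\Pi^{\rm div}_{h,k}\nabla\times\bm u$ and the $H({\rm div})$-projection estimate against $\nabla\times\bm u\in\bm H^{s+1}(\Omega)$ (capped at degree $k-1$) to obtain the last line of the bound. The $\Pi^o_k p - p$ pairing, after the weight split $h_F^{-1/2}\cdot h_F^{-1/2}$, is controlled by \Cref{lemma2.5} applied to $p\in H^{s+1}$, yielding the fourth line. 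The boundary term against $\Pi^o_{k-1}\bm r - \bm r$ and the two terms containing $\bm\Pi^{\rm curl}_{h,k}\bm u - \bm u$ (a trace and a volume pairing) are treated with \Cref{lemma2.5,lem:21} and a scaled trace inequality, producing the $h^s\|\bm r\|_s$ trace contribution of line two and the $h^s\|\bm u\|_s$ contributions of line three.

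The delicate term is $T_1 := -\langle\bm n\times(\widehat{\bm v}_h - \bm v_h),\,\nabla\times(\Pi^o_{k-1}\bm r - \bm r)\rangle_{\partial\mathcal T_h}$, which must deliver both the $h^2\|\nabla\times\nabla\times\bm r\|$ piece of line one and a share of the $h^s\|\bm r\|_s$ piece of line two. The key is to introduce the conforming interpolation $\bm v_h^c := \bm\Pi_h(\bm v_h,\widehat{\bm v}_h)\in\bm H_0({\rm curl};\Omega)$ from \Cref{interpolation_projection} and decompose $\bm v_h - \widehat{\bm v}_h = (\bm v_h - \bm v_h^c) + (\bm v_h^c - \widehat{\bm v}_h)$. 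The skeleton contribution from $\bm v_h^c - \widehat{\bm v}_h$ reduces, after using that $\nabla\times\bm r$ is globally continuous (the stronger regularity $\bm r\in\bm H^{1+s}(\Omega)$ is inherited from $\nabla\times\bm u\in\bm H^{1+s}({\rm curl};\Omega)$) and that $\bm n\times\widehat{\bm v}_h = \bm n\times\bm v_h^c = \bm 0$ on $\Gamma$, to a sum of interior-face integrals of $\bm n\times(\bm v_h^c - \widehat{\bm v}_h)\cdot[\![\nabla\times\Pi^o_{k-1}\bm r]\!]$; this vanishes since the tangential jump lies in $[\mathcal P_{k-2}(F)]^3\subset[\mathcal P_k(F)]^3$, against which $\bm n\times(\bm v_h^c - \widehat{\bm v}_h)$ is $L^2$-orthogonal by the projection identity \eqref{ib}.

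For the remaining volume piece $\bm v_h - \bm v_h^c$, elementwise integration by parts rewrites its contribution to $T_1$ as
\[
(\nabla\times(\bm v_h - \bm v_h^c),\,\nabla\times(\Pi^o_{k-1}\bm r - \bm r))_{\mathcal T_h} - (\bm v_h - \bm v_h^c,\,\nabla\times\nabla\times(\Pi^o_{k-1}\bm r - \bm r))_{\mathcal T_h}.
\]
Substituting the PDE identity $\nabla\times\nabla\times\bm r = \bm f - \nabla p \in [L^2(\Omega)]^3$ into the $\bm r$-part of the second pairing, combined with \eqref{2_error} and the scaling $\|h_T^{1/2}\,\cdot\,\|_{\partial\mathcal T_h}\sim h^2\|h_F^{-3/2}\,\cdot\,\|_{\partial\mathcal T_h}$, produces exactly the $h^2\|\nabla\times\nabla\times\bm r\|$ bound. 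The $\nabla\times\nabla\times\Pi^o_{k-1}\bm r$ piece is controlled by writing it as $\nabla\times\nabla\times(\Pi^o_{k-1}\bm r - \Pi^o_1\bm r)$ (since $\nabla\times\nabla\times$ annihilates linear polynomials), applying the elementwise inverse inequality, and invoking \Cref{lemma2.5}; the first pairing is bounded analogously using \eqref{15_error} after writing $\nabla\times\Pi^o_{k-1}\bm r = \nabla\times(\Pi^o_{k-1}\bm r - \Pi^o_0\bm r)$, and both yield pieces of the $h^s\|\bm r\|_s$ bound. The main obstacle is precisely $T_1$: one must simultaneously exploit the projection property \eqref{ib} of $\bm\Pi_h$ to eliminate the skeleton contribution, and invoke the PDE to promote $\nabla\times\nabla\times\bm r$ to a global $L^2$ quantity, since the bare regularity $\bm r\in\bm H^s(\Omega)$ controls neither $\nabla\times\bm r$ nor $\nabla\times\nabla\times\bm r$.
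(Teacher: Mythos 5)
Your overall strategy is the paper's: split $E^{\mathcal J}_h$ into the six pairings, dispatch five of them by weighted Cauchy--Schwarz together with \Cref{lemma2.5}, \Cref{lem:21} and the commuting property, and isolate $E_1$ as the delicate term, attacking it with the $\bm H_0({\rm curl})$-conforming lift $\bm\Pi_h(\bm v_h,\widehat{\bm v}_h)$ of \Cref{interpolation_projection} and the PDE to promote $\nabla\times\nabla\times\bm r$ to a global $L^2$ quantity. Your skeleton argument --- single-valuedness of tangential traces kills the $\nabla\times\bm r$ part, and the moment property \eqref{ib} kills the $\nabla\times\Pi^o_{k-1}\bm r$ part against the interior jumps --- is a correct rephrasing of the paper's substitution $\widehat{\bm v}_h\mapsto\bm\Pi_h(\bm v_h,\widehat{\bm v}_h)$ in the face term, and your volume identity is consistent with the paper's \eqref{eq:E1Est1} once one notes that your extra term $(\bm v_h-\bm v_h^c,\nabla\times\nabla\times\Pi^o_{k-1}\bm r)_{\mathcal T_h}$ in fact vanishes by \eqref{ia} (your inverse-inequality bound for it is also adequate).

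The gap is in the remaining pairing $(\nabla\times(\bm v_h-\bm v_h^c),\nabla\times(\Pi^o_{k-1}\bm r-\bm r))_{\mathcal T_h}$. You explain how to handle the $\nabla\times\Pi^o_{k-1}\bm r$ half (subtract $\Pi^o_0\bm r$ and apply an inverse estimate), but you say nothing about the $\nabla\times\bm r$ half, and it cannot simply be estimated in $L^2$: for $s\in(\tfrac12,1)$ the quantity $\|\nabla\times\bm r\|_0$ is not controlled by the $\|\bm r\|_s$ appearing in the asserted bound (nor by $\|\nabla\times\bm u\|_{s+1}$ or $\|\nabla\times\nabla\times\bm r\|_{\mathcal T_h}$), and even granting $\bm r\in [H^{1+s}(\Omega)]^3$ from \eqref{regular} you would only produce a term $h\,\|\nabla\times\bm r\|_0\,\|h_F^{-3/2}\bm n\times(\bm v_h-\widehat{\bm v}_h)\|_{\partial\mathcal T_h}$, which is not one of the terms of the lemma. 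The paper closes exactly this step with the commuting diagram: since $\nabla\times(\bm v_h-\bm v_h^c)$ is a piecewise polynomial, $(\,\cdot\,,\nabla\times\bm r)_T=(\,\cdot\,,\bm\Pi^{{\rm div}}_{h,j}\nabla\times\bm r)_T=(\,\cdot\,,\nabla\times\bm\Pi^{{\rm curl}}_{h,j}\bm r)_T$ for a suitable degree $j$ (cf. \eqref{eq:E1Est2}), so the pairing becomes one against $\nabla\times(\Pi^o_{k-1}\bm r-\bm\Pi^{{\rm curl}}_{h,j}\bm r)$, the curl of a difference of two discrete fields each within $Ch^s\|\bm r\|_s$ of $\bm r$ in $L^2$; the elementwise inverse inequality then gives $h^{s-1}\|\bm r\|_s$, which combined with \eqref{15_error} yields the claimed $h^s\|\bm r\|_s$ contribution. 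You already invoke this commuting identity for $E_3$; you need it (or an equivalent device) here as well.
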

\begin{proof}
	For simplicity we define
	\begin{align*}
	E_1=& -\langle \bm{n}\times(\widehat{\bm{v}}_h-\bm{v}_h),\nabla \times(\bm{\Pi}_{k-1}^{o}\bm{r}-\bm{r}) \rangle_{\partial\mathcal{T}_h}, \nonumber\\
	E_2=&-\langle \bm{n}\times(\widehat{\bm d}_h-\nabla \times\bm{v}_h),\bm{\Pi}_{k-1}^{o}\bm{r}-\bm{r} \rangle_{\partial\mathcal{T}_h}, \nonumber\\
	E_3=&-\langle h_F^{-1} \bm{n}\times(  \nabla\times\bm{\Pi}^{{\rm curl}}_{h,k}\bm{u}-\nabla\times\bm{u} ), \bm{n}\times(\nabla \times\bm{v}_h-\widehat{\bm d}_h )  \rangle_{\partial\mathcal{T}_h},\nonumber\\
	E_4=&-\langle \widehat{q}_h-q_h,(\bm{\Pi}^{{\rm curl}}_{h,k}\bm{u}-\bm u)\cdot\bm n \rangle_{\partial\mathcal{T}_h},\nonumber\\
	E_5=&\langle h_F^{-1}( \Pi_{k}^op-p),q_h-\widehat{q}_h \rangle_{\partial\mathcal{T}_h},\nonumber\\
	E_6=&- (\nabla  q_h,\bm{\Pi}^{{\rm curl}}_{h,k}\bm{u}-\bm u)_{\mathcal T_h}.\nonumber
	\end{align*}
	Then by \eqref{eq:EJh} we have
	\begin{align*}
	E^{\mathcal{J}}_h(\bm{\sigma};\bm{\tau}_h)=E_1+E_2+E_3+E_1+E_4+E_5+E_6,
	\end{align*}
	and we will bound each $E_j\,(1\leq j\leq 6)$ separately.
	It follows from the definition of $E_1$ and the fact $\langle\bm n\times\widehat{\bm v}_h, \nabla\times\bm r \rangle_{\partial\mathcal{T}_h}=0$ that
	\begin{align*}
	E_1=& -\langle \bm{n}\times\widehat{\bm{v}}_h,\nabla \times(\bm{\Pi}_{k-1}^{o}\bm{r}-\bm{r}) \rangle_{\partial\mathcal{T}_h}+\langle \bm{n}\times\bm{v}_h,\nabla \times(\bm{\Pi}_{k-1}^{o}\bm{r}-\bm{r}) \rangle_{\partial\mathcal{T}_h}\nonumber\\
	=& -\langle \bm{n}\times\widehat{\bm{v}}_h,\nabla \times\bm{\Pi}_{k-1}^{o}\bm{r} \rangle_{\partial\mathcal{T}_h}+\langle \bm{n}\times\bm{v}_h,\nabla \times(\bm{\Pi}_{k-1}^{o}\bm{r}-\bm{r}) \rangle_{\partial\mathcal{T}_h}.
	\end{align*}
	By \eqref{ia}, \eqref{ib} and integration by parts we have
	\begin{align*}
	E_1=& -\langle \bm{n}\times\bm{\Pi}_h(\bm v_h,\widehat{\bm{v}}_h),\nabla \times\bm{\Pi}_{k-1}^{o}\bm{r} \rangle_{\partial\mathcal{T}_h}\nonumber\\
	&\quad-( \bm{v}_h,\nabla \times\nabla \times(\bm{\Pi}_{k-1}^{o}\bm{r}-\bm{r}) )_{\mathcal T_h}+( \nabla \times\bm{v}_h,\nabla \times(\bm{\Pi}_{k-1}^{o}\bm{r}-\bm{r}) )_{\mathcal T_h}\nonumber\\
	=&-\langle \bm{n}\times\bm{\Pi}_h(\bm v_h,\widehat{\bm{v}}_h),\nabla \times\bm{\Pi}_{k-1}^{o}\bm{r} \rangle_{\partial\mathcal{T}_h}-(\bm{v}_h,\nabla \times\nabla \times\bm{\Pi}_{k-1}^{o}\bm{r} )_{\mathcal T_h}\nonumber\\
	&\quad+( \bm{v}_h,\nabla\times\nabla\times\bm{r} )_{\mathcal T_h}+( \nabla \times\bm{v}_h,\nabla \times(\bm{\Pi}_{k-1}^{o}\bm{r}-\bm{r}) )_{\mathcal T_h}\nonumber\\
	=& -\langle \bm{n}\times\bm{\Pi}_h(\bm v_h,\widehat{\bm{v}}_h),\nabla \times\bm{\Pi}_{k-1}^{o}\bm{r} \rangle_{\partial\mathcal{T}_h}+( \bm{\Pi}_h(\bm{v}_h,\widehat{\bm v}_h),\nabla \times\nabla \times\bm{\Pi}_{k-1}^{o}\bm{r} )_{\mathcal T_h}\nonumber\\
	&\quad+( \bm{v}_h,\nabla\times\nabla\times\bm{r} )_{\mathcal T_h}+( \nabla \times\bm{v}_h,\nabla \times(\bm{\Pi}_{k-1}^{o}\bm{r}-\bm{r}) )_{\mathcal T_h}\nonumber\\
	=&-( \nabla\times\bm{\Pi}_h(\bm{v}_h,\widehat{\bm v}_h),\nabla \times\bm{\Pi}_{k-1}^{o}\bm{r} )_{\mathcal T_h}\nonumber\\
	&\quad+( \bm{v}_h,\nabla\times\nabla\times\bm{r} )_{\mathcal T_h}+( \nabla \times\bm{v}_h,\nabla \times(\bm{\Pi}_{k-1}^{o}\bm{r}-\bm{r}) )_{\mathcal T_h}.
	\end{align*}
	Due to the fact $(\nabla\times\bm{\Pi}_h(\bm v_h,\widehat{\bm v}_h),\nabla\times\bm r)_{\mathcal T_h}-(\bm{\Pi}_h(\bm v_h,\widehat{\bm v}_h),\nabla\times\nabla\times\bm r)_{\mathcal T_h}=0$, we have
	\begin{align}\label{eq:E1Est1}
	E_1=& -( \nabla\times\bm{\Pi}_h(\bm{v}_h,\widehat{\bm v}_h),\nabla \times(\bm{\Pi}_{k-1}^{o}\bm{r}-\bm r) )_{\mathcal T_h}\nonumber\\
	&\quad+( \bm{v}_h-\bm{\Pi}_h(\bm{v}_h,\widehat{\bm v}_h),\nabla\times\nabla\times\bm{r} )_{\mathcal T_h}+( \nabla \times\bm{v}_h,\nabla \times(\bm{\Pi}_{k-1}^{o}\bm{r}-\bm{r}) )_{\mathcal T_h}\nonumber\\
	=&( \nabla \times\bm{v}_h- \nabla\times\bm{\Pi}_h(\bm{v}_h,\widehat{\bm v}_h),\nabla \times(\bm{\Pi}_{k-1}^{o}\bm{r}-\bm r ))_{\mathcal T_h}\nonumber\\
	&\quad+( \bm{v}_h-\bm{\Pi}_h(\bm{v}_h,\widehat{\bm v}_h),\nabla\times\nabla\times\bm{r} )_{\mathcal T_h}.
	\end{align}
	Note that we can rewrite the first term on the right-hand side of \eqref{eq:E1Est1} as
	\begin{align}\label{eq:E1Est2}
	\begin{split}
	&( \nabla \times\bm{v}_h- \nabla\times\bm{\Pi}_h(\bm{v}_h,\widehat{\bm v}_h),\nabla \times(\bm{\Pi}_{k-1}^{o}\bm{r}-\bm r ))_{\mathcal T_h}\\
	&\qquad\qquad=	( \nabla \times\bm{v}_h- \nabla\times\bm{\Pi}_h(\bm{v}_h,\widehat{\bm v}_h),\nabla \times\bm{\Pi}_{k-1}^{o}\bm{r}-\bm{\Pi}_{h,k+2}^{{\rm div}}\nabla\times\bm r )_{\mathcal T_h}\\
	&\qquad\qquad=	( \nabla \times\bm{v}_h- \nabla\times\bm{\Pi}_h(\bm{v}_h,\widehat{\bm v}_h),\nabla \times\bm{\Pi}_{k-1}^{o}\bm{r}-\nabla\times
	\bm{\Pi}_{h,k+2}^{{\rm curl}}\bm r )_{\mathcal T_h}.
	\end{split}
	\end{align}
	Therefore by \eqref{15_error}, \eqref{2_error}, \eqref{eq:E1Est1}, \eqref{eq:E1Est2} and inverse inequality, we get
	\begin{align}
	|E_1|&\lesssim (h^2\|\nabla\times\nabla\times\bm r\|_{_{\mathcal T_h}}+h^{s}\|\bm r\|_{s})\|h_F^{-3/2}\bm n\times(\bm v_h-\widehat{\bm v}_h)\|_{\partial\mathcal{T}_h}.
	\end{align}
	We can bound the other $E_j$ terms as follows.
	\begin{align}
	|E_2|&\lesssim \sum_{T\in\mathcal{T}_h}\|h_F^{-1/2}\bm{n}\times(\widehat{\bm d}_h-\nabla \times\bm{v}_h)\|_{\partial T}h_T^{s}\|\bm{r}\|_{s,T}\nonumber\\
	&\lesssim h^{s}\|\bm{r}\|_{s}\|h_F^{-1/2}\bm{n}\times(\widehat{\bm d}_h-\nabla \times\bm{v}_h)\|_{\partial \mathcal{T}_h},\nonumber\\
	|E_3|&\lesssim \sum_{T\in\mathcal{T}_h}\| h_F^{-1/2}\bm{n}\times(\widehat{\bm d}_h-\nabla \times\bm{v}_h)\|_{\partial T}h_T^{\min(k-1,s)}\|\nabla\times\bm{u}\|_{1+s,T}\nonumber\\
	&\lesssim h^{\min(k-1,s)}\|\nabla\times\bm u\|_{1+s}\|h_F^{-1/2}\bm{n}\times(\widehat{\bm d}_h-\nabla \times\bm{v}_h)\|_{\partial \mathcal{T}_h},\nonumber\\
	|E_4|&\lesssim h^{s}\|\bm{u}\|_{s}\| h_F^{-1/2}(q_h-\widehat{q}_h)\|_{\partial \mathcal{T}_h}\nonumber\\
	|E_5|&\lesssim h^{s}\|p\|_{s+1}\|h_F^{-1/2}(q_h-\widehat{q}_h)\|_{\partial\mathcal{T}_h}\nonumber,\\
	|E_6|&\lesssim h^{s}\|\bm u\|_{s}\|\nabla  q_h\|_{_{\mathcal T_h}} .\nonumber
	\end{align}
	This completes the proof.
\end{proof}

\begin{lemma} \label{l53}  Let $(\bm r,\bm u,p)$ be the solution of \eqref{mix0source}, then there holds
	\begin{align}\label{eq:lem427}
	\|\bm{\sigma}_h-\bm{\mathcal{J}}_h\bm{\sigma}\|_{\bm{\Sigma}_h}\lesssim& h^2\|\nabla\times\nabla\times \bm r\|_{_{\mathcal T_h}}+h^{\min(s,k-1)}
	\|\nabla\times\bm{u}\|_{s+1}\nonumber\\
	&+h^{s}(
	\|\bm{r}\|_{s}+\|\bm u\|_{s}+\|p\|_{s+1}).
	\end{align}
\end{lemma}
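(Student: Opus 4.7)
The plan is to combine the discrete inf-sup condition in Theorem~\ref{Th45} with the consistency-type identity of Lemma~\ref{Jh} and the term-by-term bound of Lemma~\ref{lemma53}. First I would verify that $\bm{\sigma}_h-\bm{\mathcal{J}}_h\bm{\sigma}$ belongs to the homogeneous space $\bm{\Sigma}_h^{\bm{0}}$. This requires checking that the boundary pieces of $\bm{\mathcal{J}}_h\bm{\sigma}$ match those prescribed in $\bm{\Sigma}_h^{\bm{g}}$: the tangential trace of $\bm{n}\times\bm{\Pi}^{\rm curl}_{h,k}\bm{u}\times\bm{n}$ on $\Gamma$ reduces to $\bm{\Pi}_{h,k}^{\Gamma,{\rm div}}\bm{g}_1$ by the definition of $\bm{\Pi}_{h,k}^{\Gamma,{\rm div}}$; the tangential trace of $\bm{\Pi}_{k-1}^{\partial}\nabla\times\bm{u}$ equals $\bm{\Pi}_{k-1}^{\partial}\bm{g}_2$; and $\Pi_k^{\partial}p|_{\Gamma}=0$ since $p|_{\Gamma}=0$. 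All three match the boundary conditions encoded in $\bm{\Sigma}_h^{\bm{g}}$, so the difference lies in $\bm{\Sigma}_h^{\bm{0}}$.

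Next I would subtract \eqref{Bh_HDG} from \eqref{510} to get the error equation
\begin{align*}
B_h(\bm{\sigma}_h-\bm{\mathcal{J}}_h\bm{\sigma},\bm{\tau}_h)=-E^{\mathcal{J}}_h(\bm{\sigma};\bm{\tau}_h)\qquad\forall\, \bm{\tau}_h\in\bm{\Sigma}_h^{\bm{0}}.
\end{align*}
Then by the inf-sup estimate \eqref{lbb1} applied to $\bm{\sigma}_h-\bm{\mathcal{J}}_h\bm{\sigma}\in\bm{\Sigma}_h^{\bm{0}}$,
\begin{align*}
\|\bm{\sigma}_h-\bm{\mathcal{J}}_h\bm{\sigma}\|_{\bm{\Sigma}_h}\lesssim \sup_{\bm{\tau}_h\in\bm{\Sigma}_h^{\bm{0}},\bm{\tau}_h\neq\bm 0}\frac{|E^{\mathcal{J}}_h(\bm{\sigma};\bm{\tau}_h)|}{\|\bm{\tau}_h\|_{\bm{\Sigma}_h}},
\end{align*}
so the task reduces to estimating the right-hand side uniformly in $\bm{\tau}_h$.

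For the numerator I would invoke Lemma~\ref{lemma53}, which splits $E^{\mathcal{J}}_h$ into a sum of products. Each factor depending on $\bm{\tau}_h$ (namely $\|h_F^{-3/2}\bm{n}\times(\bm{v}_h-\widehat{\bm{v}}_h)\|_{\partial\mathcal{T}_h}$, $\|h_F^{-1/2}\bm{n}\times(\nabla\times\bm{v}_h-\widehat{\bm{d}}_h)\|_{\partial\mathcal{T}_h}$, $\|h_F^{-1/2}(q_h-\widehat{q}_h)\|_{\partial\mathcal{T}_h}$, and $\|\nabla q_h\|_{\mathcal{T}_h}$) is controlled by a constituent of the norm $\|\bm{\tau}_h\|_{\bm{\Sigma}_h}$ as defined in \eqref{norm-gamma}; for the $\|\nabla q_h\|$ factor I would use the equivalence \eqref{relation} in combination with the definition of $\|(q_h,\widehat{q}_h)\|_P$. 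Dividing by $\|\bm{\tau}_h\|_{\bm{\Sigma}_h}$ and taking the supremum produces exactly the collection of terms
\begin{align*}
h^2\|\nabla\times\nabla\times\bm{r}\|_{\mathcal{T}_h}+h^{\min(s,k-1)}\|\nabla\times\bm{u}\|_{s+1}+h^s\bigl(\|\bm{r}\|_s+\|\bm{u}\|_s+\|p\|_{s+1}\bigr),
\end{align*}
which is \eqref{eq:lem427}.

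I expect the main subtlety to be the first verification step: confirming that $\bm{\mathcal{J}}_h\bm{\sigma}$ lies in the correct affine space so that the inf-sup test function argument is valid. The algebraic subtraction and the subsequent application of Lemma~\ref{lemma53} are straightforward once that is in place, since Lemma~\ref{lemma53} has been tailored so that every $\bm{\tau}_h$-dependent factor appearing in the bound is exactly one of the norm constituents defining $\|\bm{\tau}_h\|_{\bm{\Sigma}_h}$.
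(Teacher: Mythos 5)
Your proposal is correct and follows essentially the same route as the paper: the paper's proof of Lemma~\ref{l53} is precisely the combination of the inf-sup estimate \eqref{lbb1} from \Cref{Th45}, the error equation obtained by subtracting \eqref{Bh_HDG} from \eqref{510} in \Cref{Jh}, and the term-by-term bound of \Cref{lemma53} with each $\bm{\tau}_h$-dependent factor absorbed into $\|\bm{\tau}_h\|_{\bm{\Sigma}_h}$. Your explicit verification that $\bm{\sigma}_h-\bm{\mathcal{J}}_h\bm{\sigma}\in\bm{\Sigma}_h^{\bm{0}}$ is a point the paper leaves implicit, but it is the same argument.
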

\begin{proof}
	By \eqref{Bh_HDG}, \Cref{Th45} and \Cref{Jh} we get
	\begin{align}\label{eq:sigmaJ}
	\begin{split}
	\|\bm{\sigma}_h-\bm{\mathcal{J}}_h\bm{\sigma}\|_{\bm{\Sigma}_h}&\lesssim\sup_{\bm{\tau}_h\in\bm{\Sigma}^{\bm{0}}_h,\bm{\tau}_h\neq\bm{0}}\frac{B_h( \bm{\sigma}_h-{\bm{\mathcal{J}}_h}\bm{\sigma},\bm{\tau}_h  )}{\|\bm{\tau}_h\|_{\bm{\Sigma}_h}}\\
	&=\sup_{\bm{\tau}_h\in\bm{\Sigma}^{\bm{0}}_h,\bm{\tau}_h\neq\bm{0}}\frac{E^{\mathcal{J}}_h( \bm{\sigma}_h,\bm{\tau}_h  )}{\|\bm{\tau}_h\|_{\bm{\Sigma}_h}}.
	\end{split}
	\end{align}
	Then \eqref{eq:lem427} directly follows from \eqref{eq:tauh}, \eqref{norm-gamma}, \eqref{eq:sigmaJ} and \Cref{lemma53}.
\end{proof}
%
%
%

\begin{theorem}\label{Thm:r} Let $(\bm r,\bm u,p)$ be the solution of \eqref{mix0source}, then we have
	\begin{align*}
	&\|\nabla\times(\bm u-\bm u_h)\|_{_{\mathcal T_h}}+
	\|\bm r-\bm r_h\|_{_{\mathcal T_h}}+\|\nabla(p-p_h)\|_{_{\mathcal T_h}}\nonumber\\
	&\qquad\lesssim h^2\|\nabla\times\nabla\times \bm r\|_{_{\mathcal T_h}}+h^{\min(s,k-1)}
	\|\nabla\times\bm{u}\|_{s+1}+h^{s}(
	\|\bm{r}\|_{s}+\|\bm u\|_{s}+\|p\|_{s+1}).
	\end{align*}
\end{theorem}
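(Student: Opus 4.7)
The plan is to combine a triangle-inequality decomposition with \Cref{l53}. I introduce the natural projections and split
\begin{align*}
\bm{r} - \bm{r}_h &= (\bm{r} - \Pi_{k-1}^o\bm{r}) + (\Pi_{k-1}^o\bm{r} - \bm{r}_h),\\
\nabla\times(\bm{u} - \bm{u}_h) &= \nabla\times(\bm{u} - \bm{\Pi}_{h,k}^{\rm curl}\bm{u}) + \nabla\times(\bm{\Pi}_{h,k}^{\rm curl}\bm{u} - \bm{u}_h),\\
\nabla(p-p_h) &= \nabla(p - \Pi_k^o p) + \nabla(\Pi_k^o p - p_h).
\end{align*}
The ``continuous minus projection'' pieces fall immediately under the stated right-hand side by the preliminary estimates: \Cref{lemma2.5} yields $\|\bm{r} - \Pi_{k-1}^o\bm{r}\|_{\mathcal{T}_h}\lesssim h^s\|\bm{r}\|_s$ and, applied componentwise to $\nabla p$, $\|\nabla(p - \Pi_k^o p)\|_{\mathcal{T}_h}\lesssim h^s\|p\|_{s+1}$, while \Cref{lem:21} yields $\|\nabla\times(\bm{u} - \bm{\Pi}_{h,k}^{\rm curl}\bm{u})\|_{\mathcal{T}_h}\lesssim h^{\min(s+1,k)}\|\nabla\times\bm{u}\|_{s+1}$; since $\min(s+1,k)=\min(s,k-1)+1$, this last piece is dominated by the $h^{\min(s,k-1)}\|\nabla\times\bm{u}\|_{s+1}$ term already on the right.

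The substantive work lies in the ``projection minus discrete'' pieces. The first of them is free: $\Pi_{k-1}^o\bm{r} - \bm{r}_h$ is the $\bm{r}$-component of $\bm{\sigma}_h - \bm{\mathcal{J}}_h\bm{\sigma}$, so $\|\Pi_{k-1}^o\bm{r} - \bm{r}_h\|_{\mathcal{T}_h}\le\|\bm{\sigma}_h - \bm{\mathcal{J}}_h\bm{\sigma}\|_{\bm{\Sigma}_h}$, which \Cref{l53} controls by exactly the claimed right-hand side. For the curl and gradient pieces I would not appeal to $\|\cdot\|_{\bm{\Sigma}_h}$ directly, since that norm only records $\|h_T\nabla(\cdot)\|$ and $\|\nabla\times\nabla\times(\cdot)\|$ on the discrete side; instead I would rerun the specific test-function constructions $\bm{\tau}_h^2$ and $\bm{\tau}_h^4$ from the proof of \Cref{Th45}, this time applied to $\bm{\sigma}_h-\bm{\mathcal{J}}_h\bm{\sigma}$ via the error identity \eqref{510}. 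Step 4 of that proof furnishes the full unweighted $\|\nabla(\Pi_k^o p - p_h)\|_{\mathcal{T}_h}$, and Step 2, together with the tangential jump terms already bundled into $\|\cdot\|_U$, upgrades the $\nabla\times\nabla\times$ control to $\|\nabla\times(\bm{\Pi}_{h,k}^{\rm curl}\bm{u} - \bm{u}_h)\|_{\mathcal{T}_h}$ (after an integration by parts). On each right-hand side what appears is $\|\bm{\sigma}_h - \bm{\mathcal{J}}_h\bm{\sigma}\|_{\bm{\Sigma}_h}$ together with $E^{\mathcal{J}}_h$-type residuals already estimated in \Cref{lemma53}, giving the required bound.

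The principal obstacle is precisely this mismatch: the theorem demands the unweighted quantities $\|\nabla(p - p_h)\|$ and $\|\nabla\times(\bm{u} - \bm{u}_h)\|$, whereas $\|\cdot\|_{\bm{\Sigma}_h}$ only measures an $h_T$-weighted gradient and a double curl on the discrete side, so the black-box inequality \eqref{lbb1} cannot be invoked as a one-liner. Resolving this forces one to reopen the inf-sup proof and use the fact that the intermediate bounds produced by $\bm{\tau}_h^2$ and $\bm{\tau}_h^4$ are genuinely stronger than what the norm finally records. Once this stronger control is inserted, the projection estimates from the decomposition and the residual bound of \Cref{l53} combine to deliver the claimed estimate.
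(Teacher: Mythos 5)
Your overall route is the one the paper intends: the paper gives no explicit proof of \Cref{Thm:r}, presenting it as a direct consequence of \Cref{l53} together with the approximation properties of $\Pi_{k-1}^o$, $\bm{\Pi}_{h,k}^{\rm curl}$ and $\Pi_k^o$, which is exactly your triangle-inequality decomposition. Your handling of the $\bm r$-component and of the ``continuous minus projection'' pieces is correct, and you are right to flag that \eqref{lbb1} and \Cref{l53} cannot be cited as a one-liner, since $\|\cdot\|_{\bm{\Sigma}_h}$ records only $\|\nabla\times\nabla\times(\cdot)\|_{\mathcal T_h}$ and an $h_T$-weighted gradient on the discrete side. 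Your fix for the pressure is sound: testing the error identity \eqref{510} with the Step-4 test function built from $\mathcal I_k(\Pi_k^op-p_h)$ makes the residual $E_h^{\mathcal J}$ vanish (its $\bm v$-entry is a conforming gradient with matching hybrid trace and zero curl, and its $q$-entry is zero), so Step 4 of \Cref{Th45} genuinely delivers the unweighted $\|\nabla(\Pi_k^op-p_h)\|_{\mathcal T_h}$ up to terms already controlled by \Cref{l53}.

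The gap is in the curl term. Step 2 of \Cref{Th45} controls the double curl, and no integration by parts converts $\|\nabla\times\nabla\times\bm\epsilon_u\|_{\mathcal T_h}$ plus the tangential jump terms of $\|\cdot\|_U$ into $\|\nabla\times\bm\epsilon_u\|_{\mathcal T_h}$, where $\bm\epsilon_u:=\bm{\Pi}^{\rm curl}_{h,k}\bm u-\bm u_h$. Concretely, writing $\|\nabla\times\bm\epsilon_u\|^2_{\mathcal T_h}=(\nabla\times\bm\epsilon_u,\nabla\times(\bm\epsilon_u-\bm w))_{\mathcal T_h}+(\nabla\times\bm\epsilon_u,\nabla\times\bm w)_{\mathcal T_h}$ with the conforming lift $\bm w=\bm{\Pi}_h(\bm\epsilon_u,\widehat{\bm\epsilon}_u)\in\bm H_0({\rm curl};\Omega)$ and integrating the second term by parts produces $(\nabla\times\nabla\times\bm\epsilon_u,\bm w)_{\mathcal T_h}$, which requires a bound on $\|\bm w\|_{\mathcal T_h}\sim\|\bm\epsilon_u\|_{\mathcal T_h}$. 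That $L^2$ quantity is not available at this stage: it is produced only by the duality argument of \Cref{Thm:L^2}, whose proof (see \eqref{newpi2-l}) itself invokes $\|\nabla\times(\bm{\Pi}^{\rm curl}_{h,k}\bm u-\bm u_h)\|_{\mathcal T_h}\lesssim h^s\|\bm r\|_s$, i.e.\ the very estimate you are trying to establish, so your argument as written is circular. Closing it needs an ingredient not contained in \Cref{Th45} --- for instance a Friedrichs-type inequality $\|\bm w\|_{\mathcal T_h}\lesssim\|\nabla\times\bm w\|_{\mathcal T_h}+\cdots$ for the lift (combined with the divergence control in $\|\bm\epsilon_u\|_{\rm div}$, which is itself only $h_T$-weighted, so even this is delicate) and an absorption by Young's inequality, or a separate error equation extracted from \eqref{fhm1}. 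You should state and prove such an auxiliary inequality explicitly before the curl bound can be claimed.
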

%

%
%
\subsection{Error estimates by dual arguments}

We assume $\bm{\Theta}\in \bm H({\rm div}^0;\Omega)$ and introduce the problem:
\begin{eqnarray}
\left\{
\begin{aligned}
\bm{r}^d- \nabla\times \nabla\times \bm{u}^d&=0 \,\,\,\,\,\text{ in }\Omega,\\
\nabla\times \nabla\times\bm{r}^d+\nabla p^d&=\bm{\Theta}\,\,\, \text{ in }\Omega, \label{dual}\\
\nabla\cdot\bm{u}^d&=\Lambda \,\,\,\, \text{ in }\Omega,  \\
\bm{n}_{\Gamma}\times \bm{u}^d &=\bm{0}\quad\, \text{on}\Gamma,\\
\bm{n}_{\Gamma}\times\nabla\times \bm{u}^d &=\bm{0} \,\,\,\,\, \text{ on }\Gamma,\\
p^d&=0\,\,\,\,\,  \text{ on }\Gamma.
\end{aligned}
\right.
\end{eqnarray}
Assume that
\begin{align}
\|\bm{r}^d\|_{\alpha}+\|\bm{u}^d\|_{1+\alpha,{\rm curl}}\lesssim \|\bm{\Theta}\|_{_{\mathcal T_h}}+\|\Lambda\|_{_{\mathcal T_h}},\label{reg}
\end{align}
where $\alpha\in (\frac{1}{2},1]$ is dependent on $\Omega$. It is obviously that $p^d=0$. Note that when $\Omega$ is convex, \eqref{reg} holds with $\alpha=1$.

\begin{lemma}\label{lemma55} Let $\bm{\sigma}$ and $\bm{\sigma}^d$ be the solutions of \eqref{mix0source} and \eqref{dual}, respectively. We have
	\begin{align*}
	|E^{\mathcal{J}}_h(\bm{\sigma}^d; {\bm{\mathcal{J}}_h}\bm{\sigma}-\bm{\sigma}_h)|&\lesssim  h^{\min(\alpha,k-1)}(\|\bm{\Theta}\|_{_{\mathcal T_h}}+\|\Lambda\|_{_{\mathcal T_h}})\|{\bm{\mathcal{J}}_h}\bm{\sigma}-\bm{\sigma}_h\|_{\bm{\Sigma}_h},\nonumber\\
	|E^{\mathcal{J}}_h(\bm{\sigma};\bm{\mathcal{J}}_h\bm{\sigma}^d)| &\lesssim 
	h^{\min(\alpha,k-1)}(h^s\|\bm r\|_{s}
	+h^{\min(k-1,s)}\|\nabla\times\bm{u}\|_{s+1})
	(\|\bm{\Theta}\|_{_{\mathcal T_h}}+\|\Lambda\|_{_{\mathcal T_h}}).
	\end{align*}
\end{lemma}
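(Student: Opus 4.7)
The main idea is that both bounds come from \Cref{lemma53}, which already decomposes $E^{\mathcal{J}}_h(\bm\sigma;\bm\tau_h)$ into the six boundary/volume pieces $E_1,\ldots,E_6$. What is new here is that we now put the dual solution $\bm\sigma^d$ into one of the two slots, and we exploit the two simplifications coming from the dual data, namely $p^d\equiv 0$ and $\nabla\times\nabla\times\bm r^d=\bm\Theta$ (directly from~\eqref{dual}). The regularity bound~\eqref{reg} then converts every Sobolev norm of $\bm r^d$, $\bm u^d$, or $\nabla\times\bm u^d$ of the relevant index into $\|\bm\Theta\|_{\mathcal T_h}+\|\Lambda\|_{\mathcal T_h}$.

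\noindent\textbf{First bound.} I apply \Cref{lemma53} verbatim, reading $\bm\sigma\leftarrow\bm\sigma^d$ and $\bm\tau_h\leftarrow\bm{\mathcal J}_h\bm\sigma-\bm\sigma_h\in\bm\Sigma^{\bm 0}_h$, with the choice $s=\alpha$. The $\|p^d\|_{\alpha+1}$ term disappears, the factor $\|\nabla\times\nabla\times\bm r^d\|_{\mathcal T_h}$ becomes $\|\bm\Theta\|_{\mathcal T_h}$, and the remaining coefficients $\|\bm r^d\|_\alpha$, $\|\bm u^d\|_\alpha$, $\|\nabla\times\bm u^d\|_{1+\alpha}$ are each dominated by $\|\bm\Theta\|_{\mathcal T_h}+\|\Lambda\|_{\mathcal T_h}$ via~\eqref{reg}. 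Since every $\bm\tau_h$-seminorm factor appearing on the right of \Cref{lemma53} is a single summand of $\|\bm\tau_h\|_{\bm\Sigma_h}$ (cf.~\eqref{norm-gamma}), we may factor it out, and collecting the slowest power $h^{\min(\alpha,k-1)}$ yields the first claimed inequality.

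\noindent\textbf{Second bound.} I take $\bm\tau_h=\bm{\mathcal J}_h\bm\sigma^d$ and evaluate the six pieces $E_1,\ldots,E_6$ of~\eqref{eq:EJh} by hand. Since $p^d=0$ forces $q_h=\Pi^o_k p^d=0$ and $\widehat q_h=\Pi^\partial_k p^d=0$, the three contributions $E_4,E_5,E_6$ vanish identically. Moreover, with $\bm v_h:=\bm\Pi^{{\rm curl}}_{h,k}\bm u^d$ and $\widehat{\bm v}_h:=\bm n\times\bm v_h\times\bm n$ the pointwise tangential part of $\bm v_h$, one has $\bm n\times(\bm v_h-\widehat{\bm v}_h)=\bm 0$ on every face, which kills $E_1$. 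Only $E_2$ and $E_3$ survive, and Cauchy--Schwarz bounds their sum by
\begin{equation*}
\bigl(h^{s}\|\bm r\|_{s}+h^{\min(k-1,s)}\|\nabla\times\bm u\|_{s+1}\bigr)\,\Xi,
\qquad
\Xi:=\bigl\|h_F^{-1/2}\bm n\times(\bm\Pi^{\partial}_{k-1}\nabla\times\bm u^d-\nabla\times\bm\Pi^{{\rm curl}}_{h,k}\bm u^d)\bigr\|_{\partial\mathcal T_h}.
\end{equation*}

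\noindent\textbf{Bounding $\Xi$ (the main obstacle).} It remains to show $\Xi\lesssim h^{\min(\alpha,k-1)}\|\nabla\times\bm u^d\|_{1+\alpha}$. I insert $\nabla\times\bm u^d$ by the triangle inequality. The piece involving $\bm\Pi^{\partial}_{k-1}$ is immediate from \Cref{lemma2.5} applied to $\nabla\times\bm u^d\in H^{1+\alpha}$ with index $\min(1+\alpha,k)$. The piece involving $\nabla\times\bm\Pi^{{\rm curl}}_{h,k}\bm u^d$ is the delicate one, because \Cref{lem:21} delivers only element-interior estimates; here I plan to use either a scaled trace inequality of the form $\|w\|_{0,\partial T}\lesssim h_T^{-1/2}\|w\|_{0,T}+h_T^{1/2}|w|_{1,T}$ combined with \Cref{lem:21}, or equivalently to split further through $\Pi^{\partial}_{k-1}\nabla\times\bm u^d$ and close with an inverse inequality on the polynomial remainder. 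This is the only genuinely technical step, and its sharpness at the rate $h^{\min(\alpha,k-1)}$ is what sets the convergence order in the dual argument. Once $\Xi$ is in hand, invoking~\eqref{reg} to absorb $\|\nabla\times\bm u^d\|_{1+\alpha}$ into $\|\bm\Theta\|_{\mathcal T_h}+\|\Lambda\|_{\mathcal T_h}$ produces the second claimed inequality.
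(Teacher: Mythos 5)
Your proposal is correct and follows essentially the same route as the paper: the first bound is obtained by applying \Cref{lemma53} to the dual solution (using $p^d=0$, $\nabla\times\nabla\times\bm r^d=\bm\Theta$ and the regularity \eqref{reg}), and the second by taking $\bm\tau_h=\bm{\mathcal J}_h\bm\sigma^d$ so that only $E_2$ and $E_3$ survive, leaving the factor $\|h_F^{-1/2}\bm n\times(\widehat{\bm d}_h-\nabla\times\bm v_h)\|_{\partial\mathcal T_h}$ to be bounded by $h^{\min(\alpha,k-1)}(\|\bm\Theta\|_{\mathcal T_h}+\|\Lambda\|_{\mathcal T_h})$. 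The one step you leave as a sketch (your bound on $\Xi$, via a trace inequality or a splitting through an $L^2$ projection plus an inverse inequality) is exactly the step the paper asserts without detail, and either of your two proposed routes closes it at the stated rate.
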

\begin{proof}
	Similar to the proof of \Cref{lemma53}, we get	
	\begin{align*}
	|E^{\mathcal{J}}_h(\bm{\sigma}^d; {\bm{\mathcal{J}}_h}\bm{\sigma}-\bm{\sigma}_h)|\lesssim&  \left(h^2\|\nabla\times\nabla\times \bm r^d\|_{_{\mathcal T_h}}+h^{\min(\alpha,k-1)}\|\nabla\times\bm{u}^d\|_{1+\alpha}\right)\|{\bm{\mathcal{J}}_h}\bm{\sigma}-\bm{\sigma}_h\|_{\bm{\Sigma}_h}\nonumber\\
	&\quad+h^{\alpha}\left(\|\bm{r}^d\|_{\alpha}+\|\bm u^d\|_{1+\alpha}\right)\|{\bm{\mathcal{J}}_h}\bm{\sigma}-\bm{\sigma}_h\|_{\bm{\Sigma}_h}\nonumber\\
	\lesssim& h^{\min(\alpha,k-1)}(\|\bm{\Theta}\|_{_{\mathcal T_h}}+\|\Lambda\|_{_{\mathcal T_h}})\|{\bm{\mathcal{J}}_h}\bm{\sigma}-\bm{\sigma}_h\|_{\bm{\Sigma}_h},
	\end{align*}
	%
	%
	By noticing $p^d=0$, it holds
	\begin{align*}
	E^{\mathcal{J}}_h(\bm{\sigma};\bm{\mathcal{J}}_h\bm{\sigma}^d)&\lesssim (h^s\|\bm r\|_{s}
	+h^{\min(k-1,s)}\|\nabla\times\bm{u}\|_{s+1})
	\|h_F^{-1/2}\bm{n}\times(\widehat{\bm d}_h-\nabla \times\bm{v}_h)\|_{0,\partial \mathcal{T}_h}\nonumber\\
	&\lesssim 
	h^{\min(\alpha,k-1)}(h^s\|\bm r\|_{s}
	+h^{\min(k-1,s)}\|\nabla\times\bm{u}\|_{s+1})
	(\|\bm{\Theta}\|_{_{\mathcal T_h}}+\|\Lambda\|_{_{\mathcal T_h}}).
	\end{align*}
\end{proof}

\begin{theorem}\label{Thm:L^2}
	Let $(\bm r,\bm u,p)$ and $(\bm r_h,\bm u_h,\widehat{\bm u}_h,p_h,\widehat{p}_h)$ be the solutions of \eqref{mix0source} and \eqref{Bh_HDG}, respectively, then there holds

	\begin{align}\label{eq:L2err}
	\begin{split}
	&\|\bm u-\bm u_h\|_{_{\mathcal T_h}}+\|p-p_h\|_{_{\mathcal T_h}}\\
	&\qquad\lesssim 
	h^{\min(\alpha,k-1)}
	\left(
	h^2\|\nabla\times\nabla\times \bm r\|_{_{\mathcal T_h}}+h^{\min(s,k-1)}
	\|\nabla\times\bm{u}\|_{s+1}
	\right)\\
	&\qquad\quad+
	h^{s+\min(\alpha,k-1)}
	\left(
	\|\bm{r}\|_{s}+\|\bm u\|_{s}+\|p\|_{s+1}
	\right)+
	\|\bm u-\bm{\Pi}_{h,k}^{\rm curl}\bm u\|_{_{\mathcal T_h}}.
	\end{split}
	\end{align}

\end{theorem}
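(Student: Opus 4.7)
The plan is a duality (Aubin--Nitsche type) argument built on the dual problem~\eqref{dual} and the projection identity of Lemma~\ref{Jh}. First I would pick the dual data $(\bm{\Theta},\Lambda)\in\bm H({\rm div}^0;\Omega)\times L^2(\Omega)$ so that the dual load
\begin{align*}
F^d_h(\bm\tau_h):=-(\bm{\Theta},\bm v_h)+(\Lambda,q_h),
\end{align*}
evaluated at $\bm\tau_h=\bm{\mathcal J}_h\bm\sigma-\bm\sigma_h$, recovers a coercive combination of the projected errors $\bm{\Pi}^{{\rm curl}}_{h,k}\bm u-\bm u_h$ and $\Pi^o_kp-p_h$. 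Since \eqref{dual} requires $\bm{\Theta}$ to be divergence free, the natural device is a Helmholtz splitting $\bm{\Pi}^{{\rm curl}}_{h,k}\bm u-\bm u_h=\bm w+\nabla\phi$ with $\bm w\in \bm H({\rm div}^0;\Omega)$ and $\phi\in H^1_0(\Omega)$; I set $\bm\Theta:=\bm w$ and $\Lambda:=\Pi^o_k p-p_h$, so that $\|\bm\Theta\|_{\mathcal T_h}+\|\Lambda\|_{\mathcal T_h}\lesssim \|\bm{\Pi}^{{\rm curl}}_{h,k}\bm u-\bm u_h\|_{\mathcal T_h}+\|\Pi^o_kp-p_h\|_{\mathcal T_h}$.

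Next I would apply Lemma~\ref{Jh} to the dual triple $\bm\sigma^d$ with test $\bm\tau_h=\bm{\mathcal J}_h\bm\sigma-\bm\sigma_h\in\bm\Sigma^{\bm 0}_h$, and combine with the primal Galerkin identity $B_h(\bm{\mathcal J}_h\bm\sigma-\bm\sigma_h,\bm\tau_h)=E^{\mathcal J}_h(\bm\sigma;\bm\tau_h)$ tested at $\bm\tau_h=\bm{\mathcal J}_h\bm\sigma^d$. The symmetry of $B_h$ (used already in Theorem~\ref{Th45}) then yields the key identity
\begin{align*}
F^d_h(\bm{\mathcal J}_h\bm\sigma-\bm\sigma_h)=E^{\mathcal J}_h(\bm\sigma;\bm{\mathcal J}_h\bm\sigma^d)-E^{\mathcal J}_h(\bm\sigma^d;\bm{\mathcal J}_h\bm\sigma-\bm\sigma_h),
\end{align*}
which is designed precisely so that Lemma~\ref{lemma55} controls the right-hand side. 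Inserting those bounds and Lemma~\ref{l53} for $\|\bm{\mathcal J}_h\bm\sigma-\bm\sigma_h\|_{\bm\Sigma_h}$, the right-hand side becomes $h^{\min(\alpha,k-1)}(\|\bm\Theta\|_{\mathcal T_h}+\|\Lambda\|_{\mathcal T_h})$ times the desired $h$-power of the regularity norms. Using the estimate on $\|\bm\Theta\|+\|\Lambda\|$ from Step~1, I factor that common quantity out and absorb one copy into the left-hand side, obtaining the target bound on $\|\bm{\Pi}^{{\rm curl}}_{h,k}\bm u-\bm u_h\|_{\mathcal T_h}+\|\Pi^o_kp-p_h\|_{\mathcal T_h}$.

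The proof is then closed by the triangle inequalities $\|\bm u-\bm u_h\|_{\mathcal T_h}\le \|\bm u-\bm{\Pi}^{{\rm curl}}_{h,k}\bm u\|_{\mathcal T_h}+\|\bm{\Pi}^{{\rm curl}}_{h,k}\bm u-\bm u_h\|_{\mathcal T_h}$ and $\|p-p_h\|_{\mathcal T_h}\le \|p-\Pi^o_k p\|_{\mathcal T_h}+\|\Pi^o_kp-p_h\|_{\mathcal T_h}$. The first produces the extra $\|\bm u-\bm{\Pi}^{{\rm curl}}_{h,k}\bm u\|_{\mathcal T_h}$ term appearing on the right of~\eqref{eq:L2err}, while the latter projection error is $\lesssim h^{s+1}\|p\|_{s+1}$ by Lemma~\ref{lemma2.5} and is absorbed into the $h^{s+\min(\alpha,k-1)}\|p\|_{s+1}$ cluster since $\min(\alpha,k-1)\le 1$.

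The hard part will be Step~1: $\bm{\Pi}^{{\rm curl}}_{h,k}\bm u-\bm u_h$ is generally \emph{not} divergence free, so the Helmholtz correction $\nabla\phi$ must be shown to be of higher order than the target bound. Concretely, one must estimate $\|\nabla\phi\|_{\mathcal T_h}$ via an elliptic lift from $\nabla\cdot(\bm{\Pi}^{{\rm curl}}_{h,k}\bm u-\bm u_h)$ and the jumps $[\![\bm n\cdot\bm u_h]\!]$, both of which sit inside the HDG quantity $\|\bm u_h\|_{{\rm div}}$ and are already controlled by $\|\bm{\mathcal J}_h\bm\sigma-\bm\sigma_h\|_{\bm\Sigma_h}$ through the stabilization $s^p_h$ and the commuting property $\nabla\times\bm{\Pi}^{{\rm curl}}_{h,k}=\bm{\Pi}^{{\rm div}}_{h,k}\nabla\times$. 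Carrying this out carefully, using the zero boundary conditions of $\bm\sigma^d$ in \eqref{dual} and the regularity assumption~\eqref{reg}, is the delicate step that makes the gradient piece subordinate and closes the duality loop.
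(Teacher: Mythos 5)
Your high-level architecture matches the paper's: the dual problem \eqref{dual} with $\Lambda=\Pi^o_kp-p_h$, the identity $F^d_h(\bm{\mathcal J}_h\bm\sigma-\bm\sigma_h)=E^{\mathcal J}_h(\bm\sigma;\bm{\mathcal J}_h\bm\sigma^d)-E^{\mathcal J}_h(\bm\sigma^d;\bm{\mathcal J}_h\bm\sigma-\bm\sigma_h)$ via symmetry of $B_h$, the bounds of Lemma~\ref{lemma55} and Lemma~\ref{l53}, and the closing triangle inequality are all exactly what the paper does. The genuine gap is in your Step~1, and it is not merely a technical loose end: your claim that the gradient piece $\nabla\phi$ of the Helmholtz splitting of $\bm{\Pi}^{\rm curl}_{h,k}\bm u-\bm u_h$ can be shown to be ``of higher order than the target bound'' is false in general, and the elliptic-lift route you sketch does not deliver it. Writing $\|\nabla\phi\|^2=(\bm{\Pi}^{\rm curl}_{h,k}\bm u-\bm u_h,\nabla\phi)$ and integrating by parts produces $\nabla\cdot(\bm{\Pi}^{\rm curl}_{h,k}\bm u-\bm u_h)$ and the normal jumps, but these are controlled by $\|\bm u_h\|_{\rm div}$ only with the \emph{positive} weights $h_T$ and $h_F^{1/2}$ built into that seminorm, so unweighting them costs a factor $h^{-1}$ and the argument becomes either circular or lossy. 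The correct mechanism, which your proposal does not contain, is to eliminate the gradient contribution through the scheme itself: the paper introduces the modified projection $\bm{\Pi}^{\rm m}_k$ in \eqref{m1}--\eqref{m2} precisely so that the conforming interpolant of the modified error is discretely divergence-free (\eqref{oror}), which is what licenses the Hodge-mapping estimate \eqref{co-2} with the $h^{\alpha}$ gain; and the residual gradient pairing is converted, by testing \eqref{fhm3} with $q_h=\widehat q_h=\sigma_h$ (giving \eqref{on}) together with $\nabla\cdot\bm u=g$, into $(\bm{\Pi}^{\rm curl}_{h,k}\bm u-\bm{\Pi}^{\rm m}_k(\bm u,\bm u_h),\bm{\Pi}^{\rm curl}_{h,k}\bm u-\bm u)$ as in \eqref{4444}.

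This last point also corrects your reading of the statement: the unweighted term $\|\bm u-\bm{\Pi}^{\rm curl}_{h,k}\bm u\|_{\mathcal T_h}$ in \eqref{eq:L2err} is not just an artifact of the final triangle inequality; it is exactly the price paid for the gradient part of the error, which is \emph{not} subordinate and does not acquire the extra factor $h^{\min(\alpha,k-1)}$. Any completion of your Step~1 must land on this same term rather than eliminate it, so as written your plan both mislabels the difficulty and omits the two ingredients (the discrete orthogonality defining $\bm{\Pi}^{\rm m}_k$, and the use of the third scheme equation with the discrete potential) that actually close the duality loop.
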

\begin{proof}
	We introduce a projection $\bm{\Pi}_k^{\rm m}$. For all $\bm v\in \bm H^s({\rm curl};\Omega)$ with $s>1/2$ and $\bm v_h\in \bm U_h$, such that
	\begin{align}\label{m1}
	\bm{\Pi}_k^{\rm m}(\bm v,\bm v_h)=\bm{\Pi}_{h,k}^{\rm curl}\bm v+\nabla \sigma_h,
	\end{align}
	where $\sigma_h\in \mathcal P_k(\mathcal T_h)\cap H^1_0(\Omega)$ satisfies 
	\begin{align}\label{m2}
	(\nabla\sigma_h,\nabla q_h)_{\mathcal T_h}=(\bm{\Pi}_{h,k}^{\rm curl,c}(\bm v_h-\bm{\Pi}_{h,k}^{\rm curl}\bm v),\nabla q_h)_{\mathcal T_h}\qquad \forall q_h\in \mathcal P_k(\mathcal T_h)\cap H^1_0(\Omega).
	\end{align}
	From \eqref{m1} and \eqref{m2}, it holds
	\begin{align}\label{oror}
	(\bm{\Pi}_{h,k}^{\rm curl,c}(\bm v_h-\bm{\Pi}_k^{\rm m}(\bm v,\bm v_h)),\nabla q_h)=0\qquad \forall q_h\in \mathcal P_k(\mathcal T_h)\cap H^1_0(\Omega).
	\end{align}
	We take
	$\Lambda=\Pi_k^op-p_h$ in \eqref{dual} and let $\bm\Theta\in \bm H({\rm curl};\Omega)\cap \bm H({\rm div};\Omega)$ be the solution of
	\begin{subequations}
		\begin{align*}
		\nabla\times\bm\Theta&=\nabla\times ( \bm{\Pi}_{h,k}^{\rm curl,c}(\bm u_h-\bm{\Pi}_k^{\rm m}(\bm u,\bm u_h)))&\text{in }\Omega,\\
		\nabla\cdot\bm\Theta&=0&\text{in }\Omega,\\
		\bm n\times\bm\Theta&=\bm 0&\text{on }\Gamma.
		\end{align*}
	\end{subequations}
	Due to \eqref{oror} and the result in \cite[Lemma 4.5]{MR2009375} one has
	\begin{align}
	\|\bm\Theta-( \bm{\Pi}_{h,k}^{\rm curl,c}(\bm u_h-\bm{\Pi}_k^{\rm m}(\bm u,\bm u_h) ))\|_{_{\mathcal T_h}}&\lesssim h^{\alpha}\|\nabla\times ( \bm{\Pi}_{h,k}^{\rm curl,c}(\bm u_h-\bm{\Pi}_k^{\rm m}(\bm u,\bm u_h) ))\|_{_{\mathcal T_h}}.\label{co-2}
	\end{align}
	We obtain the following estimates by \eqref{m2}, \Cref{pic}, and an inverse inequality 
	\begin{align}\label{newpi0-l}
	\begin{split}
	&\|\bm{\Pi}_{h,k}^{\rm curl}\bm u-\bm{\Pi}_k^{\rm m}(\bm u,\bm u_h)\|_{_{\mathcal T_h}}\\
	&\qquad=\|\nabla\sigma_h\|_{\mathcal T_h}\\
	&\qquad\le \|\bm{\Pi}_{h,k}^{\rm curl,c}(\bm u_h-\bm{\Pi}_{h,k}^{\rm curl}\bm u)\|_{\mathcal T_h} \\
	&\qquad\le
	\|\bm{\Pi}_{h,k}^{\rm curl,c}(\bm u_h-\bm{\Pi}_{h,k}^{\rm curl}\bm u)-(\bm u_h-\bm{\Pi}_{h,k}^{\rm curl}\bm u)\|_{\mathcal T_h} 
	+\|(\bm u_h-\bm{\Pi}_{h,k}^{\rm curl}\bm u)\|_{\mathcal T_h}\\
	&\qquad\lesssim\left(\|h_F^{1/2}\bm n\times[\![\bm u_h-\bm{\Pi}^{\rm curl}_{h,k}\bm u]\!]\|_{0,\mathcal{F}_h}+ \|\bm u_h-\bm{\Pi}^{\rm curl}_{h,k}\bm u\|_{_{\mathcal T_h}}\right) \\
	&\qquad\lesssim \|\bm{\Pi}^{\rm curl}_{h,k}\bm u-\bm u_h\|_{_{\mathcal T_h}}.
	\end{split}
	\end{align}
	Similarity, we can get
	\begin{align}
	\|\bm{\Pi}_{h,k}^{\rm curl,c}(\bm u_h-\bm{\Pi}_k^{\rm m}(\bm u,\bm u_h))\|_{_{\mathcal T_h}}
	&\lesssim \|\bm{\Pi}^{\rm curl}_{h,k}\bm u-\bm u_h\|_{_{\mathcal T_h}},\label{newpi1-l}
	\end{align}
	and
	\begin{align}
	\begin{split}
	&\|\nabla\times((\bm{\Pi}_{h,k}^{\rm curl,c}\bm u_h-\bm{\Pi}_k^{\rm m}(\bm u,\bm u_h)))\|_{_{\mathcal T_h}}\\
	&\qquad\lesssim  \left( \|h_F^{-1/2}\bm n\times[\![\bm u_h-\bm{\Pi}^{\rm curl}_{h,k}\bm u]\!]\|_{0,\mathcal{F}_h} + \|\nabla \times(\bm{\Pi}^{\rm curl}_{h,k}\bm u-\bm u_h)\|_{_{\mathcal T_h}} \right)\\
	&\qquad\lesssim \left(\|\bm\sigma-\bm\sigma_h\|_{\bm\Sigma_h}+\|\nabla \times(\bm{\Pi}^{\rm curl}_{h,k}\bm u-\bm u_h)\|_{_{\mathcal T_h}}\right)\\
	&\qquad\lesssim \left(\|\bm\sigma-\bm\sigma_h\|_{\bm\Sigma_h}+h^{s}\|\bm r\|_s\right)\\
	&\qquad\lesssim h^{s}(\|\bm{r}\|_{s}+\|\bm{u}\|_{s}+\|p\|_{s+1} ).\label{newpi2-l}
	\end{split}
	\end{align}
	It then follows from \eqref{co-2}  and \eqref{newpi2-l} that
	\begin{align}
	\|\bm\Theta-( \bm{\Pi}_{h,k}^{\rm curl,c}(\bm u_h-\bm{\Pi}_k^{\rm m}(\bm u,\bm u_h) ))\|_{_{\mathcal T_h}}&\lesssim h^{s+\alpha}(\|\bm{r}\|_{s}+\|\bm{u}\|_{s}+\|p\|_{s+1} ).\label{theta-l-1}
	\end{align}	
	Follows from the above estimates inequality, it holds that
	\begin{align}
	\|\bm\Theta\|_{_{\mathcal T_h}}&\le
	\|\bm\Theta-( \bm{\Pi}_{h,k}^{\rm curl,c}(\bm u_h-\bm{\Pi}_k^{\rm m}(\bm u,\bm u_h) ))\|_{_{\mathcal T_h}}
	+\| \bm{\Pi}_{h,k}^{\rm curl,c}(\bm u_h-\bm{\Pi}_k^{\rm m}(\bm u,\bm u_h) )\|_{_{\mathcal T_h}}\nonumber\\
	&\lesssim h^{s+\alpha}(\|\bm{r}\|_{s}+\|\bm{u}\|_{s}+\|p\|_{s+1} )+\|\bm{\Pi}_{h,k}^{\rm curl}\bm u-\bm u_h\|_{_{\mathcal T_h}}.\label{theta-l-2}
	\end{align}
	In view of \Cref{Jh}, we have
	\begin{align}
	B_h(\bm{\mathcal{J}}_h\bm{\sigma}^d,\bm{\tau}_h)=-(\bm\Theta,\bm{v}_h)_{\mathcal T_h}+(\Lambda,q_h)_{\mathcal T_h}+E^{\mathcal{J}}_h(\bm{\sigma}^d;\bm{\tau}_h)\quad\forall\, \bm{\tau}_h\in\bm{\Sigma}_h^{\bm 0}. \label{b2}
	\end{align}
	We take $\bm{\tau}_h=\bm{\mathcal J}_h\bm{\sigma}-\bm{\sigma}_h\in \bm\Sigma_h^{\bm 0}$ in \eqref{b2},  and use \eqref{510}, \eqref{Bh_HDG} to get
	\begin{align}\label{B1}
	\begin{split}
	&-(\bm\Theta,\bm{\Pi}^{\rm curl}_{h,k}\bm u-\bm u_h)_{\mathcal T_h}+\|\Pi_k^op-p_h\|_{\mathcal T_h}^2\\
	&\qquad= B_h(\bm{\mathcal{J}}_h\bm{\sigma}^d,\bm{\mathcal J}_h\bm{\sigma}-\bm{\sigma}_h)
	-E^{\mathcal{J}}_h(\bm{\sigma}^d;\bm{\mathcal J}_h\bm{\sigma}-\bm{\sigma}_h)
	\\
	&\qquad= B_h(\bm{\mathcal J}_h\bm{\sigma}-\bm{\sigma}_h,\bm{\mathcal{J}}_h\bm{\sigma}^d)
	-E^{\mathcal{J}}_h(\bm{\sigma}^d;\bm{\mathcal J}_h\bm{\sigma}-\bm{\sigma}_h)
	\\
	&\qquad= E^{\mathcal{J}}_h(\bm{\sigma};\bm{\mathcal{J}}_h\bm{\sigma}^d)
	-E^{\mathcal{J}}_h(\bm{\sigma}^d;\bm{\mathcal J}_h\bm{\sigma}-\bm{\sigma}_h).
	\end{split}
	\end{align}
	We take $q_h=\widehat q_h=\sigma_h$ in \eqref{fhm3} to get
	\begin{align}\label{on}
	-(\bm u_h,\nabla\sigma_h)_{\mathcal T_h}=(g,\sigma_h)_{\mathcal T_h}.
	\end{align}
	We use a direct calculation to get
	\begin{align}
	&(\bm u-\bm{\Pi}_k^{\rm m}(\bm u,\bm u_h),\bm u-\bm u_h   )_{\mathcal T_h}\nonumber\\
	&\qquad=
	(\bm u-\bm{\Pi}_{h,k}^{\rm curl}\bm u,\bm u-\bm u_h)_{\mathcal T_h}
	+
	(-\nabla\sigma_h,\bm u-\bm u_h   )_{\mathcal T_h}
	&\text{by the definiton of $\bm\Pi^{\rm m}_k$}
	\nonumber\\
	&\qquad=
	(\bm u-\bm{\Pi}_{h,k}^{\rm curl}\bm u,\bm u-\bm u_h)_{\mathcal T_h}
	+
	(\sigma_h,\nabla\cdot\bm u   )_{\mathcal T_h}
	+(\nabla\sigma_h,\bm u_h)_{\mathcal T_h} 
	&\text{by integration by parts}
	\nonumber\\
	&\qquad=(\bm u-\bm{\Pi}_{h,k}^{\rm curl}\bm u,\bm u-\bm u_h)_{\mathcal T_h}
	&\text{by }\eqref{source}, \eqref{on} \label{444}.\nonumber\\
	\end{align}
	We use \eqref{ic1} to get
	\begin{align}\label{icc}
	\begin{split}
	&\|\bm{\Pi}_{h,k}^{\rm curl,c}(\bm u_h-\bm{\Pi}_k^{\rm m}(\bm u,\bm u_h) )-(\bm u_h-\bm{\Pi}_k^{\rm m}(\bm u,\bm u_h)\|_{_{\mathcal T_h}}\\
	&\qquad\lesssim h\|h_F^{-1/2}\bm n\times[\![\bm u_h-\bm{\Pi}_k^{\rm m}(\bm u,\bm u_h) ]\!]\|_{0,\mathcal F_h}\\
	&\qquad= h\|h_F^{-1/2}\bm n\times[\![\bm u_h ]\!]\|_{0,\mathcal F_h^I}
	+
	Ch\|h_F^{-1/2}\bm n\times[\![\bm u_h-\bm{\Pi}_{h,k}^{\rm curl}\bm u ]\!]\|_{0,\mathcal F_h^B}
	\\
	&\qquad= h\|h_F^{-1/2}\bm n\times[\![\bm u_h-\widehat{\bm u}_h ]\!]\|_{0,\mathcal F_h^I}
	+
	h\|h_F^{-1/2}\bm n\times[\![\bm u_h-\widehat{\bm u}_h]\!]\|_{0,\mathcal F_h^B}
	\\
	&\qquad\lesssim
	h\|\bm\sigma_h-\bm{\mathcal I}_h\bm\sigma\|_{\bm{\Sigma}_h} \\
	&\qquad\lesssim   h^{s+1}(\|\bm{r}\|_{s}+\|\bm{u}\|_{s}+\|p\|_{s+1} ).
	\end{split}
	\end{align}
	We use a direct calculation to get
	\begin{align}\label{4444}
	&(\bm{\Pi}_{h,k}^{\rm curl}\bm u-\bm{\Pi}_k^{\rm m}(\bm u,\bm u_h),\bm{\Pi}_{h,k}^{\rm curl}\bm u-\bm u_h   )\nonumber\\
	&\qquad=
	(\bm{\Pi}_{h,k}^{\rm curl}\bm u-\bm{\Pi}_k^{\rm m}(\bm u,\bm u_h),\bm u-\bm u_h   )
	\nonumber\\
	&\qquad\quad+(\bm{\Pi}_{h,k}^{\rm curl}\bm u-\bm{\Pi}_k^{\rm m}(\bm u,\bm u_h),\bm{\Pi}_{h,k}^{\rm curl}\bm u-\bm u   )\nonumber\\
	&\qquad=(-\nabla\sigma_h,\bm u-\bm u_h   )
	+(\bm{\Pi}_{h,k}^{\rm curl}\bm u-\bm{\Pi}_k^{\rm m}(\bm u,\bm u_h),\bm{\Pi}_{h,k}^{\rm curl}\bm u-\bm u   )
	&\text{by the definiton of $\bm\Pi^{\rm m}_k$}
	\nonumber\\
	&\qquad=(\sigma_h,\nabla\cdot\bm u   )
	+(\nabla\sigma_h,\bm u_h) \nonumber\\
	&\qquad\quad
	+(\bm{\Pi}_{h,k}^{\rm curl}\bm u-\bm{\Pi}_k^{\rm m}(\bm u,\bm u_h),\bm{\Pi}_{h,k}^{\rm curl}\bm u-\bm u   )
	&\text{by integration by parts}
	\nonumber\\
	&\qquad=(\bm{\Pi}_{h,k}^{\rm curl}\bm u-\bm{\Pi}_k^{\rm m}(\bm u,\bm u_h),\bm{\Pi}_{h,k}^{\rm curl}\bm u-\bm u   )
	&\text{by }\eqref{source}, \eqref{on}
	\nonumber\\
	&\qquad\le C\|\bm{\Pi}_{h,k}^{\rm curl}\bm u-\bm u_h\|_{_{\mathcal T_h}}\|\bm{\Pi}_{h,k}^{\rm curl}\bm u-\bm u \|_{_{\mathcal T_h}}
	&\text{by }\eqref{newpi0-l}. \nonumber\\
	\end{align}
	By using \eqref{B1}, one can obtain
	\begin{align*}
	\|\bm{\Pi}^{\rm curl}_{h,k}\bm u-\bm u_h\|_{\mathcal T_h}^2
	&=(\bm{\Pi}^{\rm curl}_{h,k}\bm u-\bm u_h,\bm{\Pi}^{\rm curl}_{h,k}\bm u-\bm u_h   )_{\mathcal T_h}\nonumber\\
	&=-(\bm\Theta,\bm{\Pi}^{\rm curl}_{h,k}\bm u-\bm u_h   )_{\mathcal T_h}+(\bm\Theta-( \bm{\Pi}_{h,k}^{\rm curl,c}(\bm u_h-\bm{\Pi}_k^{\rm m}(\bm u,\bm u_h) )),\bm{\Pi}^{\rm curl}_{h,k}\bm u-\bm u_h   )_{\mathcal T_h}\nonumber\\
	&\quad+(\bm{\Pi}^{\rm curl}_{h,k}\bm u-\bm{\Pi}_k^{\rm m}(\bm u,\bm u_h),\bm{\Pi}^{\rm curl}_{h,k}\bm u-\bm u_h   )_{\mathcal T_h}\nonumber\\
	&\quad+(\bm{\Pi}_{h,k}^{\rm curl,c}(\bm u_h-\bm{\Pi}_k^{\rm m}(\bm u,\bm u_h) )-(\bm u_h-\bm{\Pi}_k^{\rm m}(\bm u,\bm u_h) )
	,\bm{\Pi}^{\rm curl}_{h,k}\bm u-\bm u_h)_{\mathcal T_h}\nonumber\\
	&=E^{\mathcal{J}}_h(\bm{\sigma};\bm{\mathcal{J}}_h\bm{\sigma}^d)
	-E^{\mathcal{J}}_h(\bm{\sigma}^d;\bm{\mathcal J}_h\bm{\sigma}-\bm{\sigma}_h)
	-\|\Pi_k^op-p_h\|_{\mathcal T_h}^2\nonumber\\
	&\quad+(\bm\Theta-( \bm{\Pi}_{h,k}^{\rm curl,c}(\bm u_h-\bm{\Pi}_k^{\rm m}(\bm u,\bm u_h) )),\bm{\Pi}^{\rm curl}_{h,k}\bm u-\bm u_h   )_{\mathcal T_h}\nonumber\\
	&\quad+(\bm{\Pi}^{\rm curl}_{h,k}\bm u-\bm{\Pi}_k^{\rm m}(\bm u,\bm u_h),\bm{\Pi}^{\rm curl}_{h,k}\bm u-\bm u_h   )_{\mathcal T_h}\nonumber\\
	&\quad+(\bm{\Pi}_{h,k}^{\rm curl,c}(\bm u_h-\bm{\Pi}_k^{\rm m}(\bm u,\bm u_h) )-(\bm u_h-\bm{\Pi}_k^{\rm m}(\bm u,\bm u_h) )
	,\bm{\Pi}^{\rm curl}_{h,k}\bm u-\bm u_h)_{\mathcal T_h}\nonumber\\
	\end{align*}
	which together with \Cref{lemma55}, \eqref{theta-l-1}, \eqref{icc} and \eqref{4444} implies \eqref{eq:L2err}. 
	
\end{proof}

\section{Numerical experiments}
All numerical tests in this section are programmed in C++. When implementing the HDG method \eqref{fhm1}--\eqref{fhm3}, all the interior unknowns $\bm{r}_h$, ${\bm{u}_h}$ and ${p_h}$ are eliminated. The only global unknowns of the resulting system are $\widehat{\bm{u}}_h$, $\widehat{\bm c}_h$ and $\widehat{p}_h$; and then $\bm{r}_h$, ${\bm{u}_h}$ and ${p_h}$ can be recovered locally. This is the unique feature of HDG method. The solver for the linear system is chosen as GMRES, which uses AMG as preconditioner. We take $\mathcal{T}_h$ to be a uniform simplex decomposition of $\Omega$ in all examples.

\subsection{Smooth case}
We take $\Omega=(0,1)^3$. The functions
$\bm{r}$, $\bm{f}$, $g$ and $\bm{g}_T$ are determined according to the following true solutions
\begin{align*}
u_1=\sin(y)\sin(z),\
u_2=\sin(z)\sin(x),\
u_3=\sin(x)\sin(y),\
p=0.
\end{align*}
The $L_2$ errors are reported in \Cref{tab1} and \Cref{tab2} for $k=1$ and $k=2$, respectively. According to \Cref{Thm:r} and  \Cref{Thm:L^2}, we would have
\begin{align*}
\|\bm u-\bm u_h\|_{_{\mathcal T_h}}+\|\bm r-\bm r_h\|_{_{\mathcal T_h}}+\|p-p_h\|_{_{\mathcal T_h}}&\le C&k=1,\\
\|\bm u-\bm u_h\|_{_{\mathcal T_h}}+h\|\bm r-\bm r_h\|_{_{\mathcal T_h}}+\|p-p_h\|_{_{\mathcal T_h}}&\le Ch^{2}&k=2.
\end{align*}
It can be observed that the orders of convergence are better than predicted. This may due to the fact that the exact solution has high smoothness. Actually, when the true solution is smooth enough, one may derive error analysis of HDG method for the quad-curl problem similarly to the biharmonic problem and obtain better convergence rates (probably optimal with respect to $k$ for different stabilization parameters). This will be our future work. 

\begin{table}[H]
	\small
	\caption{\label{tab1}Results for $k=1$}
	\centering
	
	\begin{tabular}{c|c|c|c|c|c|c|c}
		\Xhline{1pt}
		\multirow{2}{*}{$h^{-1}$} &
		\multicolumn{2}{c|}{$\|\bm{r}-\bm{r}_h\|_{_{\mathcal T_h}}/\|\bm r\|_{_{\mathcal T_h}}$} &
		\multicolumn{2}{c|}{$\|\bm{u}-\bm{u}_h\|_{_{\mathcal T_h}}/\|\bm u\|_{_{\mathcal T_h}}$} &
		
		\multicolumn{2}{c| }{$\|p-p_{h}\|_{_{\mathcal T_h}}$} &
		\multirow{2}{*}{DOF}\\
		\cline{2-7}
		&Error &Rate  &Error &Rate  &Error &Rate \\
		\hline
		2	&3.57E-01&	    &1.34E-01	&	    &1.05E-02	&	   &1320\\
		4	&1.92E-01	&0.93 	&3.42E-02	&1.97 	&1.64E-03	&2.69 &9508\\
		8	&1.02E-01	&0.91 	&8.67E-03	&1.98 	&2.13E-04	&2.94 &71808\\
		16	&5.72E-02	&0.83 	&2.20E-03	&1.98 	&2.70E-05	&2.98 	&557568\\
		
		\Xhline{1pt}
	\end{tabular}
\end{table}

\begin{table}[H]
	\small
	\caption{\label{tab2}Results for $k=2$}
	\centering
	
	\begin{tabular}{c|c|c|c|c|c|c|c}
		\Xhline{1pt}
		\multirow{2}{*}{$h^{-1}$} &
		\multicolumn{2}{c|}{$\|\bm{r}-\bm{r}_h\|_{_{\mathcal T_h}}/\|\bm r\|_{_{\mathcal T_h}}$} &
		\multicolumn{2}{c|}{$\|\bm{u}-\bm{u}_h\|_{_{\mathcal T_h}}/\|\bm u\|_{_{\mathcal T_h}}$} &
		\multicolumn{2}{c| }{$\|p-p_{h}\|_{_{\mathcal T_h}}$} &
		\multirow{2}{*}{DOF}\\
		\cline{2-7}
		
		&Error &Rate  &Error &Rate  &Error &Rate \\
		\hline
		
		2	&3.85E-02	&	    &2.73E-02	&	    &1.60E-03	&	    &2880\\
		4	&1.26E-02	&1.61 	&2.11E-03	&3.69 	&6.41E-05	&4.64 	&20736\\
		8	&4.90E-03	&1.37 	&1.56E-04	&3.75 	&3.40E-06	&4.24 	&156672\\
		
		\Xhline{1pt}
	\end{tabular}	
\end{table}

\subsection{Singular solution on L-shaped domain}

We take $\Omega=(-1,1)^3/(-1,0)\times(-1,0)\times(-1,1)$. The functions $\bm{f}$, $g$ and $\bm{g}_T$ are determined according to the following true solutions
\begin{align*}
u_1=tr^{t-1}\sin[(t-1)\theta],\,\,
u_2=tr^{t-1}\cos[(t-1)\theta],\,\,
u_3=0,\,\,
\bm{r}=\bm{0},\,\,
p=0.
\end{align*}

By taking $t=0.9$ and $t=1.4$, we have $\bm{u}\in [H^{0.9-\epsilon}(\Omega)]^3$ and $\bm{u}\in [H^{1.4-\epsilon}(\Omega)]^3$, respectively, for arbitrarily small $\epsilon>0$. The results for $k=1$ are reported in \Cref{tab3} and \Cref{tab4}. 
\color{black} In this case, we have $\nabla\times\bm u=\bm 0$,  therefore, by \Cref{Thm:r} and \Cref{Thm:L^2} we have
\begin{align*}
\|\bm u-\bm u_h\|_{_{\mathcal T_h}}+\|\bm r-\bm r_h\|_{_{\mathcal T_h}}+\|p-p_h\|_{_{\mathcal T_h}}&\le Ch^{t-\epsilon}\|\bm u\|_{t-\epsilon}
&t=0.9, 1.4.
\end{align*}
We observe that optimal convergence rate with respect the regularity for $\|\bm u-\bm u_h\|_{_{\mathcal T_h}}$ is obtained, which verifies the theoretical results. Moreover, the convergence rates for  $\|\bm r-\bm r_h\|_{_{\mathcal T_h}}$ and $\|p-p_h\|_{_{\mathcal T_h}}$
are  better than predicted.
\color{black}

\begin{table}[H]
	\small
	\caption{ \label{tab3}Results for $k=1$, $t=0.9$}
	\centering
	
	\begin{tabular}{c|c|c|c|c|c|c|c}
		\Xhline{1pt}
		
		\multirow{2}{*}{$h^{-1}$} &		
		\multicolumn{2}{c|}{$\|\bm{r}-\bm{r}_h\|_{_{\mathcal T_h}}$} &
		\multicolumn{2}{c|}{$\|\bm{u}-\bm{u}_h\|_{_{\mathcal T_h}}$} &				
		\multicolumn{2}{c| }{$\|p-p_{h}\|_{_{\mathcal T_h}}$} &
		\multirow{2}{*}{DOF}\\
		\cline{2-7}
		
		&Error &Rate  &Error &Rate  &Error &Rate  \\
		\hline
		2	&2.73E-03	&	    &7.35E-02	&	    &3.50E-02	&	    &1034\\
		4	&2.65E-03	&0.04 	&4.20E-02	&0.81 	&2.29E-02	&0.61 	&7304\\
		8	&1.07E-03	&1.31 	&2.36E-02	&0.83 	&8.00E-03	&1.52 	&54560\\
		16	&3.89E-04	&1.46 	&1.27E-02	&0.90 	&2.46E-03	&1.70 	&420992\\

		\Xhline{1pt}
	\end{tabular}	
\end{table}

\begin{table}[H]
	\small 
	\caption{\label{tab4}Results for $k=1$, $t=1.4$}
	\centering
	
	\begin{tabular}{c|c|c|c|c|c|c|c}
		\Xhline{1pt}
		
		\multirow{2}{*}{$h^{-1}$} &	
		\multicolumn{2}{c|}{$\|\bm{r}-\bm{r}_h\|_{_{\mathcal T_h}}$} &
		\multicolumn{2}{c|}{$\|\bm{u}-\bm{u}_h\|_{_{\mathcal T_h}}$} &	
		\multicolumn{2}{c| }{$\|p-p_{h}\|_{_{\mathcal T_h}}$} &
		\multirow{2}{*}{DOF}\\
		\cline{2-7}
		
		&Error &Rate  &Error &Rate  &Error &Rate  \\
		\hline
		2	&5.38E-03	&	    &1.49E-01	&	    &8.29E-02	&	    &1034\\
		4	&2.92E-03	&0.88 	&6.34E-02	&1.23 	&3.64E-02	&1.19 	&7304\\
		8	&7.93E-04	&1.88 	&2.59E-02	&1.29 	&8.61E-03	&2.08 	&54560\\
		16	&1.81E-04	&2.13 	&9.97E-03	&1.38 	&1.80E-03	&2.26 	&420992\\
		
		\Xhline{1pt}
	\end{tabular}	
\end{table}


\bibliographystyle{siam}
\bibliography{references}{}

\end{document}